\newtheorem{Theorem}{Theorem}
\newtheorem*{Theorem*}{Theorem}
\newtheorem{Lemma}[]{Lemma}
\newtheorem{Proposition}[]{Proposition}
\newtheorem{Corollary}[]{Corollary}
\theoremstyle{definition}
\newtheorem*{Remark}{Remark}
\newtheorem{Definition}[Theorem]{Definition}
\newtheorem*{Facts}{Facts}
\newtheorem{Assumption}{Assumption}
\newtheorem*{Notation}{Notation}
 \definecolor{darkgreen}{rgb}{0,0.4,0}
\definecolor{light}{gray}{.9}
\newcommand{\cB}{\ensuremath{\mathcal B}}
\newcommand{\cF}{\ensuremath{\mathcal F}}
\newcommand{\cH}{\ensuremath{\mathcal H}}
\newcommand{\cN}{\ensuremath{\mathcal N}}
\newcommand{\cW}{\ensuremath{\mathcal W}}
\newcommand{\cX}{\ensuremath{\mathcal X}}
\newcommand{\bbC}{{\ensuremath{\mathbb C}} }
\newcommand{\bbD}{{\ensuremath{\mathbb D}} }
\newcommand{\bbE}{{\ensuremath{\mathbb E}} }
\newcommand{\bbN}{{\ensuremath{\mathbb N}} }
\newcommand{\bbP}{{\ensuremath{\mathbb P}} }
\newcommand{\bbQ}{{\ensuremath{\mathbb Q}} }
\newcommand{\bbR}{{\ensuremath{\mathbb R}} }
\newcommand{\bbT}{{\ensuremath{\mathbb T}} }
\newcommand{\bbZ}{{\ensuremath{\mathbb Z}} }
\let\a=\alpha \let\b=\beta   \let\d=\delta  \let\e=\varepsilon
 \let\g=\gamma       
  \let\s=\sigma    \let\th=\vartheta
\newcommand{\ds}{\displaystyle}
\newcommand{\dd}{\mathrm{d}}
\newcommand{\de}{\partial}
\renewcommand\Re{\operatorname{Re}}
\renewcommand\Im{\operatorname{Im}}
\newcommand{\nt}{{\lfloor nt \rfloor}}
\newcommand{\ns}{{\lfloor  ns \rfloor}}
\author[V. Silvestri]{Vittoria Silvestri}\thanks{Statistical Laboratory, University of Cambridge. Research supported by EPSRC grant EP/H023348/1 for the Cambridge Centre for Analysis}
\address{Vittoria Silvestri. 
Statslab, Centre for Mathematical Sciences,
Wilberforce Road,
Cambridge,
CB3 0WA,
United Kingdom.}
\email{V.Silvestri@maths.cam.ac.uk}
\title[Fluctuation results for Hastings-Levitov planar growth]{Fluctuation results for Hastings-Levitov planar growth} 
\begin{document}
\maketitle

\begin{abstract} We study the fluctuations of the outer domain of Hastings-Levitov clusters in the small particle limit. These are shown to be given by a continuous Gaussian process $\cF$ taking values in the space of  holomorphic functions on $\{ |z|>1 \}$, of which we provide an explicit construction. 
The boundary values $\cW$ of $\cF$ are shown to perform an Ornstein-Uhlenbeck process on the space of distributions on the unit circle $\bbT$, which can be described as the solution to the stochastic fractional heat equation
	\[ \frac{\de}{\de t} \cW (t,\th ) = - (-\Delta )^{1/2} \cW (t,\th )  + 
	\sqrt{2}\, \xi (t, \th ) \, , \]
where $\Delta$ denotes the Laplace operator acting on the spatial component, and $\xi (t,\th )$ is a space-time white noise. 
As a consequence we find that, 
when the cluster is left to grow indefinitely, the boundary process $\cW$ converges to a log-correlated Fractional Gaussian Field, which can be realised as $(-\Delta )^{-1/4}W$, for $W$ complex White Noise on $\bbT$. 
\end{abstract}


\section{Introduction}
In 1998 the physicists M. Hastings and L. Levitov introduced a one-parameter family of continuum models for growing clusters $(K_n)_{n\geq 0}$ on the plane \cite{hastings1998laplacian}, which can be considered as an off-lattice version of discrete planar aggregation models such as the Eden model or Diffusion Limited Aggregation (DLA). 
In this paper we focus on the simplest of these models, so called HL($0$), which has proven to be already very rich from a mathematical point of view, and has received much attention in recent years
\cite{rohde2005some,norris2011weak,norris2012hastings,viklund2012scaling}.

Let $\bbD$ denote the open unit disc in the complex plane, and set $K_0 = \overline{\bbD}$. At each step, a new particle $P_n$ attaches to the cluster $ K_{n-1}$ according to the following growth mechanism.
Fix $P \subset \bbC \setminus \bbD$ to be a (non-empty) connected compact set having $1$ as a limit point, and such that the complement of $K=\overline{\bbD} \cup P$ in $\bbC  \cup \{\infty\}$ is simply connected. 
Let $D_0 = (\bbC \cup \{ \infty \}) \setminus K_0$ and  $D = (\bbC \cup \{ \infty \}) \setminus K$. Then there exist a  unique conformal isomorphism $F : D_0 \to D$ and a unique constant $c \in \bbR_+$ such that  $F(z) = e^c z + \mathcal{O}(1)$ as $|z|\to\infty$.
We think of $F$ as attaching the particle $P$ to the closed unit disc $\overline{\bbD}$ at $1$. 
The constant $c$ is called \emph{logarithmic capacity} of $K$, and can be interpreted as the expected value of $ \log | B_T|$, for $B$ planar Brownian Motion started at $\infty$, stopped at the first hitting time $T$ of $K$ (cf. Proposition \ref{teoNT}).
 
Set $G=F^{-1}$, and observe that  $G(z) = e^{-c} z + \mathcal{O}(1)$  as $|z| \to\infty$. 
Let $(\Theta_n)_{n\geq 1} $ be a sequence of i.i.d. random variables  with $\Theta_n \sim $Uniform$[-\pi ,\pi)$, and set
	\[ F_n (z) := e^{i\Theta_n} F(e^{-i\Theta_n}  z ) \, , 
	\qquad 
	G_n(z) = F_n^{-1} (z) 
	\, . \]
Then the map $F_n$ attaches the particle $P$ to the unit disc at the random point $e^{i\Theta_n}$. Define $\Phi_n(z) := F_1 \circ \cdots \circ F_n (z)$, $D_n = \Phi_n (D_0)$ and $K_n = (\bbC \cup \{ \infty \} ) \setminus D_n$. 
 We say that the conformal map $\Phi_n $ grows an HL$(0)$ cluster up to the $n$-th particle, while $\Gamma_n = \Phi_n^{-1}$ maps it out. 
Note that, by conformal invariance, choosing the attachment angles to be uniformly distributed corresponds to choosing the attachment point of the $n$-th particle according to the harmonic measure of the boundary of the cluster $K_{n-1}$ seen from infinity.

With this notation, then, one has
	\[ D_{n+1} = \Phi_n \circ F_{n+1} \circ \Gamma_n (D_n ) \, . 
	\]
This suggest the following interpretation for the attachment mechanism: given the cluster $K_n$, first map it out via $\Gamma_n$, then attach a new copy of the particle $P$ to the unit circle at a uniformly chosen point $e^{i\Theta_n}$, and finally grow back the cluster $K_n$. It is then clear that, although we are attaching identical copies of the particle $P$ at each step, the particle shape gets distorted each time by the application of the conformal map $\Phi_n$, as shown in the figure 
 below. 
\begin{figure}[!ht]
    \begin{center}
     \centering
  \mbox{\hbox{
  \includegraphics[width=0.4\textwidth]{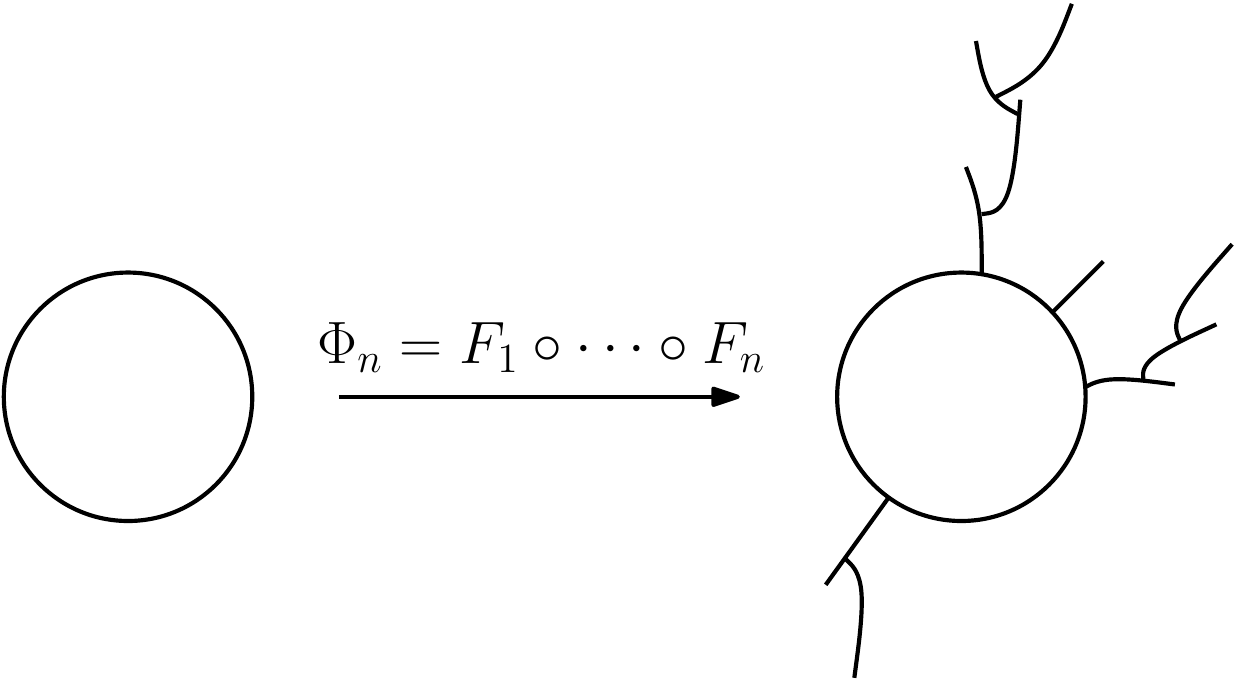}}}
       \end{center}
 \end{figure}

In \cite{norris2012hastings} J. Norris and A. Turner showed that, under mild assumption on the particle $P$, in the limit as $n\to\infty$ and $nc\to t \in \bbR_+$ the shape of HL($0$) clusters is given by a disc of radius $e^t$ centred at the origin. Moreover, with high probability each point outside the unit disc is moved radially by the cluster growth. 
This result can be rephrased as follows: given any $z \in D_0$, $\Phi_n (z) \approx e^t z$ as $n \to \infty$. 
\begin{figure}[!ht]
    \begin{center}
     \centering
  \mbox{\hbox{
  \includegraphics[width=0.3\textwidth]{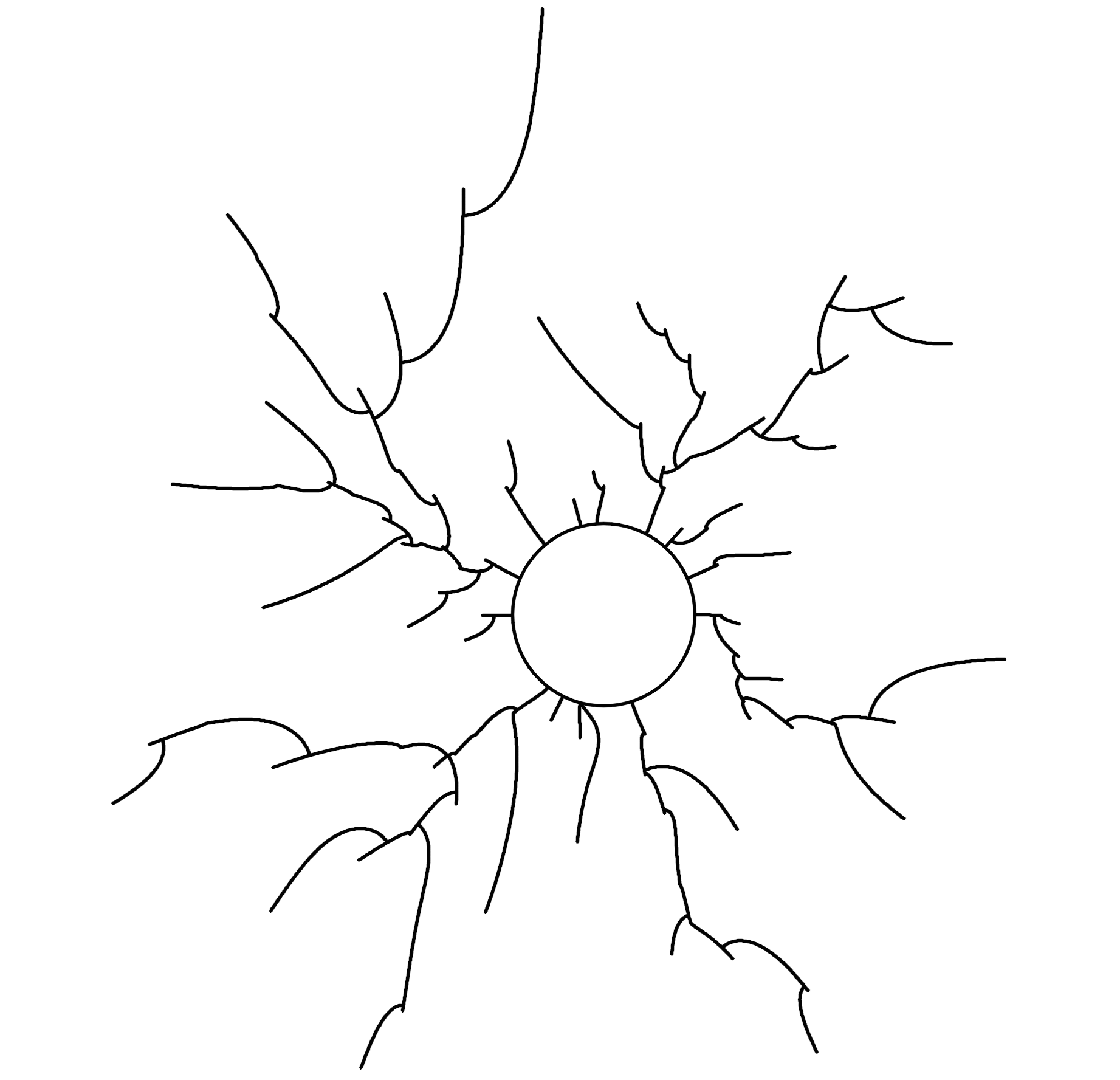} \qquad 
  \includegraphics[width=0.3\textwidth]{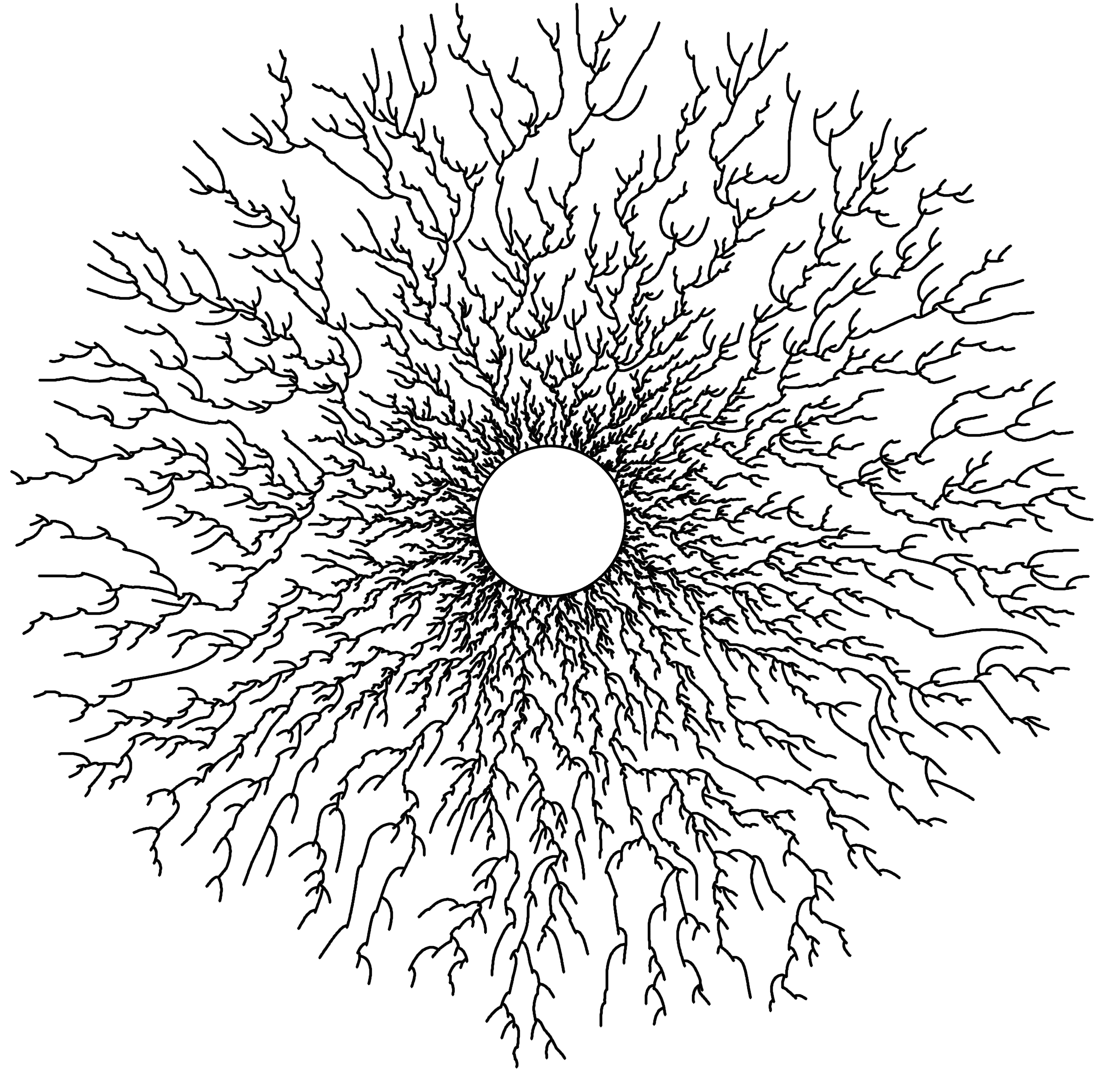} \qquad 
  \includegraphics[width=0.3\textwidth]{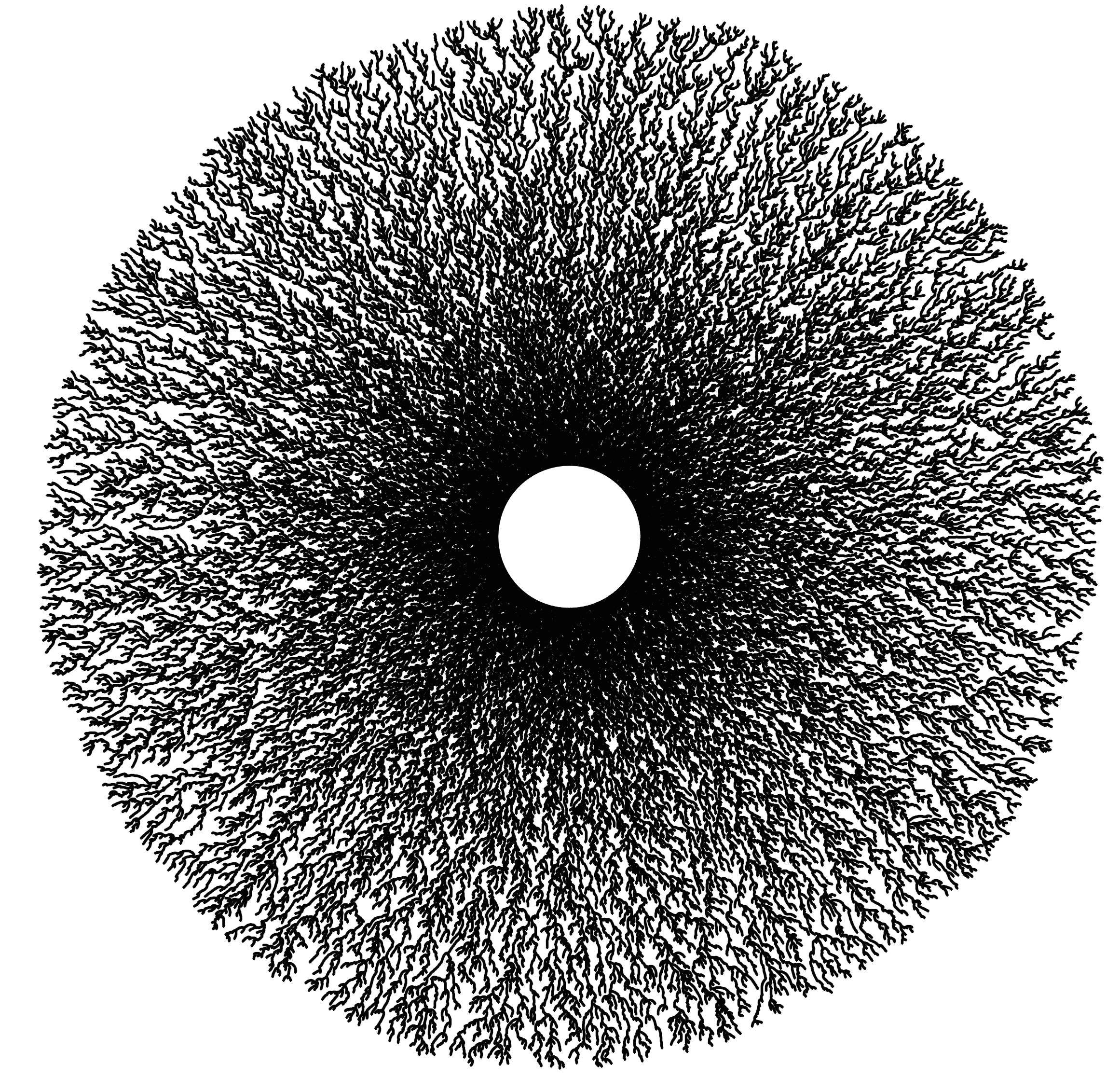}}}
       \end{center}
       \caption{Simulation of a HL($0$) cluster with $100$, $5000$ and $100000$ particles.} \label{henry}
 \end{figure}\\
In this note we investigate fluctuations of the map $\Phi_n$ around this deterministic limit. 
Our results can be divided into local and global fluctuations.

\subsection*{Local fluctuations}
Suppose we fix $z \in \bbC \setminus \overline{\bbD}$, say $z=e^{\s + ia}$ for some $\s >0 $, $a\in [-\pi , \pi )$, and look at the limiting fluctuations of $\log \Phi_n (z)$ around its mean as the HL cluster grows ($n \to \infty$) and the point $z$ approaches the unit disc radially ($\s \to 0$). We prove that, provided $\s \to 0$ slowly enough, these limiting fluctuations are Gaussian. Moreover, approaching the unit disc radially from different angles results in asymptotically independent fluctuations. Our main result for this regime is the following.

\begin{Theorem*}[Local fluctuations]\label{Tlocal}
Pick any $t>0$, and let $z = e^{ia+\s}$ for some $a \in [-\pi , \pi )$, $\s >0$. 
Then as $n \to \infty$, $nc \to t$ and $\s \to 0$ slowly enough,  
\[ \frac{\log \frac{\Phi_n (z)}{z} -nc }{\sqrt{c \log (\frac{1}{2\s})}} \longrightarrow \, \cN (0,1) \]
in distribution, where $\cN (0,1)$ denotes the law of a standard complex Gaussian random variable. Moreover, the correlation between fluctuations at two different points, say $ z=e^{ia+\s}$ and $ w=e^{ib+\s}$, vanishes in the limit, unless the angle $a-b$ converges to $0$ fast enough with $\s$. 
\end{Theorem*}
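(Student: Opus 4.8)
The plan is to realise $\log(\Phi_n(z)/z)-nc$ as the terminal value of a complex-valued martingale and then to invoke a martingale central limit theorem; essentially all of the work lies in identifying and controlling the associated bracket, which is where both the logarithm $\log(1/2\sigma)$ and the hypothesis ``$\sigma\to 0$ slowly enough'' come from.

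\textbf{Reversal and martingale structure.} Since $\Theta_1,\dots,\Theta_n$ are i.i.d., the map $\Phi_n=F_1\circ\cdots\circ F_n$ has the same law, jointly in the spatial variable, as the reversed composition $\widetilde\Phi_n:=F_n\circ\cdots\circ F_1$, so it suffices to study $\widetilde\Phi_n$, which satisfies the Markovian recursion $\widetilde\Phi_k=F_k\circ\widetilde\Phi_{k-1}$. Writing $\psi(u):=\log\frac{F(u)}{u}$, holomorphic on $\{|u|>1\}$ with $\psi(u)\to c$ as $|u|\to\infty$, one telescopes
\[ \log\frac{\widetilde\Phi_n(z)}{z}=\sum_{k=1}^{n}\psi\!\bigl(e^{-i\Theta_k}\widetilde\Phi_{k-1}(z)\bigr). \]
Because the circular average of any function holomorphic on $\{|u|>1\}$ equals its value at $\infty$, and $\Theta_k$ is uniform and independent of $\mathcal F_{k-1}:=\sigma(\Theta_1,\dots,\Theta_{k-1})$, the increments $\Delta_k:=\psi(e^{-i\Theta_k}\widetilde\Phi_{k-1}(z))-c$ obey $\mathbb E[\Delta_k\mid\mathcal F_{k-1}]=0$ and, applying the same identity to the holomorphic function $(\psi-c)^2$, also $\mathbb E[\Delta_k^2\mid\mathcal F_{k-1}]=0$. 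Hence $M_n:=\log(\widetilde\Phi_n(z)/z)-nc=\sum_{k\le n}\Delta_k$ is a complex martingale whose real and imaginary parts are, at every scale, uncorrelated and of equal variance — this is exactly why the Gaussian limit is a \emph{standard complex} one — with bracket governed by
\[ \mathbb E[|\Delta_k|^2\mid\mathcal F_{k-1}]=v(\rho_{k-1}),\qquad \rho_{k-1}:=\log|\widetilde\Phi_{k-1}(z)|,\quad v(\rho):=\frac{1}{2\pi}\int_{-\pi}^{\pi}\bigl|\psi(e^{\rho+i\phi})-c\bigr|^2\,d\phi. \]

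\textbf{The bracket and the role of ``slowly enough''.} Writing $\psi(u)-c=\sum_{j\ge1}a_j u^{-j}$ gives $v(\rho)=\sum_{j\ge1}|a_j|^2 e^{-2j\rho}$. The key analytic input, to be extracted from the hypotheses on the shrinking particle, is that $\psi-c$ behaves on $\{|u|=1\}$ like a positive bump of total mass $\asymp c$ localised near the attachment point on the harmonic scale of $P$, whence $|a_j|\asymp c$ up to the reciprocal of that scale and negligible beyond; consequently $v(\rho)$ is asymptotic to a fixed multiple of $c^2/(e^{2\rho}-1)\asymp c^2/\rho$ for $\rho$ between the particle scale and $O(1)$. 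Combining this with the Norris–Turner radial-growth estimates, which give $\rho_{k-1}=\sigma+(k-1)c+o(1)$ uniformly in $k$ (the error controlled in probability), the bracket $\langle M\rangle_n=\sum_{k\le n}v(\rho_{k-1})$ is to leading order the Riemann sum $\frac{1}{c}\int_\sigma^{t}v(r)\,dr$, which the sharp form of the estimate for $v$ evaluates to $c\log\frac{1}{2\sigma}$ up to lower order, so that $\langle M\rangle_n/\bigl(c\log\frac{1}{2\sigma}\bigr)\to1$ in probability. The hypothesis that $\sigma\to0$ slowly enough enters twice: it keeps every $\rho_{k-1}\ge\sigma$ well above the particle scale — needed both for the asymptotics of $v$ and, above all, to tame the first, largest increments, where the a priori control on $\rho_{k-1}$ is weakest — and, via the crude bound $|\Delta_k|\lesssim c/\rho_{k-1}\le c/\sigma$, it gives $\sum_{k\le n}\mathbb E[|\Delta_k|^4\mid\mathcal F_{k-1}]\lesssim (c/\sigma)^2\langle M\rangle_n=o\bigl((c\log\frac{1}{2\sigma})^2\bigr)$, i.e. the conditional Lyapunov/Lindeberg condition, provided $\sigma\gg\sqrt c$ up to logarithmic factors. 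This bracket analysis — the sharp, particle-uniform asymptotics for $v$ together with the tight control of the early terms of $\langle M\rangle_n$ — is the main obstacle; everything else is a routine application of the martingale CLT.

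\textbf{Conclusion and correlations.} With the bracket converging to a constant and the Lindeberg condition in force, the martingale CLT for $\mathbb R^2$-valued arrays gives $M_n/\sqrt{c\log\frac{1}{2\sigma}}\to\mathcal N(0,1)$, the rotationally invariant (complex) form of the limit being automatic from $\mathbb E[\Delta_k^2\mid\mathcal F_{k-1}]=0$. For $z=e^{\sigma+ia}$ and $w=e^{\sigma+ib}$ one runs the same scheme for the pair $(M_n(z),M_n(w))$; the only new ingredient is the cross-bracket $\sum_k\mathbb E[\Delta_k(z)\overline{\Delta_k(w)}\mid\mathcal F_{k-1}]$, whose $k$-th term is $\sum_{j\ge1}|a_j|^2 e^{-2j\rho_{k-1}}e^{-ij(a-b)}$, asymptotic to a multiple of $c^2/(e^{2\rho_{k-1}+i(a-b)}-1)$, and whose sum is therefore of order $c\log\frac{1}{\max(|a-b|,\sigma)}$. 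Divided by the variance $\asymp c\log\frac{1}{\sigma}$, the limiting correlation is $\asymp\log(1/|a-b|)/\log(1/\sigma)$, which tends to $0$ unless $\log\frac{1}{|a-b|}\sim\log\frac{1}{\sigma}$, i.e. unless $a-b\to0$ roughly as fast as a power of $\sigma$ — precisely the alternative in the statement.
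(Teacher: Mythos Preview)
Your approach is the paper's: a martingale CLT where $\mathbb E[\Delta_k^2\mid\mathcal F_{k-1}]=0$ (your observation; the paper's Proposition~\ref{simpler}) forces the standard-complex limit, and the bracket is identified by replacing $\rho_{k-1}$ with $\sigma+(k-1)c$ via the Norris--Turner good event. The paper avoids your reversal trick by working directly with the backward filtration $\mathscr F_{k,n}=\sigma(\Theta_k,\dots,\Theta_n)$, and it sharpens your Laurent-coefficient heuristic into the explicit estimate $\psi(u)-c=\tfrac{2c}{u-1}+O\bigl(c^{3/2}|u|^2/(|u|-1)^3\bigr)$ of Corollary~\ref{Fcor}, which turns the conditional second moment into an exact Poisson-kernel convolution (Lemma~\ref{LErep}). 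One quantitative correction: the good event of Theorem~\ref{teoJ} controls $\rho_{k-1}-(\sigma+(k-1)c)$ only to within $\varepsilon=\delta^{2/3}\log(1/\delta)\asymp c^{1/3}$, and summing that error against $(\sigma+kc)^{-3}$ forces the paper's actual threshold $\sigma\gg\sqrt{\varepsilon}\asymp c^{1/6}$, strictly stronger than your $\sigma\gg\sqrt c$ --- the latter handles your Lindeberg step but not the bracket replacement.
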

A precise statement of the above result is provided in Theorem \ref{TlocBig} of Appendix \ref{Aloc}. 
Note that the only Gaussian random field on $\bbT = \de \bbD$ having the correlation structure defined above is the one given by 
an uncountable collection of  i.i.d. $\cN(0,1)$ random variables indexed by points on the unit circle. 
Although almost surely finite at every point, this random field is very wild, in the sense that, apart from not being continuous a.s., it is not even separable (i.e. it cannot be recovered by only looking at a countable collection of points). 

\subsection*{Global fluctuations}
At the price of keeping $z$ away from the unit disc while the cluster grows, we see that the fluctuations of $\log \Phi_n$ become rather well behaved. 
In order to emphasize the dependence on $t$ in this regime, and to ultimately view the limit as a stochastic process, we now assume $nc\to 1$ as $n\to\infty$, and study the asymptotic behaviour of the map $\Phi_{\nt}$ for $t\in [0,\infty )$. 

Then, by the same techniques that allow us to prove the local fluctuations result, we can show (cf. Theorem \ref{pointwise}) that, for any \emph{fixed} $z \in \bbC \setminus \overline{\bbD}$, these fluctuations are again centred Gaussian, with  variance now depending on $|z|$ and $t$. Moreover, the correlation structure is sufficiently well behaved to enable us to prove a functional central limit theorem for $\log \Phi_\nt$ when restricted to any circle of the form $r\bbT := \{ z : |z| = r\}$, $r>1$ (cf. Theorem \ref{Tfields}). Finally, we push our analysis forward to obtain limiting fluctuations of $\log \Phi_\nt$ viewed as a c{\`a}dl{\`a}g stochastic process taking values in the space of holomorphic functions on $\bbC \setminus \overline{\bbD}$. Our main result is the following.
\begin{Theorem*}[Global fluctuations]\label{Thol_intro}
Let $\mathscr{H}$ denote the space of holomorphic functions on $\{ |z| >1\}$, and  for $n \geq 1$ set $\cF_n (t,z) = \frac{1}{\sqrt{c} } \big( \log \frac{\Phi_\nt (z)}{z} -\nt c \big)$. 
Then there exists a continuous stochastic process $\cF =(\cF(t,\cdot ))_{t\geq 0}$ taking values in $\mathscr{H}$ such that $\cF_n \to \cF$ in distribution as $n\to\infty$, with respect to the Skorokhod topology on the space of c{\`a}dl{\`a}g functions from $[0,\infty )$ to $\mathscr{H}$. 
Moreover, $\cF$ can be obtained as the holomorphic extension of its boundary values on $\{ |z|=1\}$ to the outer unit disc $\{ |z|>1\}$. These boundary values are given by a distribution-valued stochastic process $\cW=(\cW (t,\cdot ))_{t\geq 0}$, formally defined in Fourier space by
	\[ \cW(t,\th ) = \sum_{k\in \bbZ \setminus \{ 0 \} } \Big( \frac{A_k (t) + i B_k(t) }{\sqrt{2}}\Big)  \frac{e^{ik\th}}{\sqrt{2\pi}}\, ,\]
for $(A_k)_{k} $, $(B_k)_{k}$ independent collections of i.i.d. Ornstein-Uhlenbeck processes on $\bbR$, solution to
	\[ \begin{cases}
	\dd A_k (t) = -|k| A_k(t) \dd t + \sqrt{2} \,\dd \beta_k(t) \, , \\
	A_k(0)=0 
	\end{cases}
	\qquad 
	\begin{cases}
	\dd B_k (t) = -|k| B_k(t) \dd t + \sqrt{2} \,\dd \beta_k'(t)  \\
	B_k(0)=0 \, , 
	\end{cases}\]
where $(\b_k)_k$, $(\b '_k)_k$ are independent collections of i.i.d. Brownian Motions on $\bbR$.
\end{Theorem*}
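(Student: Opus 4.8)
The plan is to reduce the statement to a functional central limit theorem for the Laurent coefficients of $\log(\Phi_n(z)/z)$, and then to promote this finite–dimensional convergence to convergence in $\mathscr{H}$ and in Skorokhod space. For $|z|>1$ write
\[ \log\frac{\Phi_n(z)}{z}-nc=\sum_{k\ge 1}\gamma_k(n)\,z^{-k}\,,\qquad \log\frac{F(w)}{w}-c=\sum_{k\ge 1}\phi_k\,w^{-k}\,, \]
the second expansion being deterministic and depending only on the (small) particle $P$. Since $\Phi_n=\Phi_{n-1}\circ F_n$ with $F_n(z)=e^{i\Theta_n}F(e^{-i\Theta_n}z)=e^{c}z+O(|P|)$, expanding $g_{n-1}:=\log(\Phi_{n-1}(\cdot)/\cdot)-(n-1)c$ about the point $e^{c}z$ yields the coefficient recursion
\[ \gamma_k(n)=e^{-kc}\,\gamma_k(n-1)+\phi_k\,e^{ik\Theta_n}+R_k(n)\,, \]
where $R_k(n)$ is a polynomial in $\{\gamma_j(n-1)\}_j$ with coefficients controlled by $c$ and $|P|$. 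First I would set this recursion up carefully on $\{|z|\ge r\}$ for each fixed $r>1$ (where all expansions converge), and, relative to the filtration $\mathcal G_n=\sigma(\Theta_1,\dots,\Theta_n)$, split it into a martingale increment $\phi_k e^{ik\Theta_n}+(R_k(n)-\bbE[R_k(n)\mid\mathcal G_{n-1}])$, using $\bbE[e^{ik\Theta_n}]=0$, and a predictable drift $(e^{-kc}-1)\gamma_k(n-1)+\bbE[R_k(n)\mid\mathcal G_{n-1}]$. From it I would derive a priori moment bounds $\bbE|\gamma_k(n)|^{2p}\le C_p\,c^{\,p}$, uniform in $k$ and in $n$ with $nc$ bounded, by iterating the recursion together with Burkholder--Davis--Gundy and a discrete Gronwall inequality; these make the Laurent series absolutely convergent on $\{|z|\ge r\}$ in the limit and are the workhorse for the tightness estimates below.

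With $nc\to 1$, recall $\cF_n(t,z)=c^{-1/2}\sum_{k\ge 1}\gamma_k(\nt)\,z^{-k}$, so the objects of interest are the rescaled coefficients $c^{-1/2}\gamma_k(\nt)$. Summing the predictable drift over $\nt$ steps produces, after rescaling, the linear mean reversion $-k$ of the $k$-th mode (since $e^{-kc}-1\approx-kc$), while the remainder contribution to the drift is $O(c^{1/2})$ by the a priori bounds, hence negligible. For the martingale part the key computation is the predictable bracket: since $\bbE[e^{ik\Theta}e^{-ij\Theta}]=\delta_{jk}$ and $\bbE[e^{ik\Theta}e^{ij\Theta}]=0$ for $j,k\ge 1$, the cross–bracket between the $z^{-j}$– and the $\overline{w^{-k}}$–parts reduces, to leading order, to $c^{-1}\nt\sum_{k\ge 1}|\phi_k|^2(z\bar w)^{-k}$, with no cross terms between distinct modes and with vanishing non–conjugated bracket. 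Here the only genuinely particle–dependent input enters, to be read off from the standing hypotheses on $P$: that $c^{-2}\sum_{k\ge 1}|\phi_k|^2\,\zeta^{-k}$ converges, as $c\to 0$, to a universal kernel proportional to $(\zeta-1)^{-1}=\sum_{k\ge 1}\zeta^{-k}$, so that the brackets converge to a deterministic multiple of $t\,(z\bar w-1)^{-1}$. A martingale functional CLT (with the Lindeberg/jump condition supplied by the moment bounds) then gives joint convergence, for any finite set of indices, of $(c^{-1/2}\gamma_k(\nt))_k$ to independent complex Ornstein--Uhlenbeck processes $\Psi_k$ solving $\dd\Psi_k=-k\Psi_k\dd t+\dd N_k$ with $\bbE|N_k(t)|^2\propto t$ and $\bbE[N_k(t)^2]=0$; after matching Fourier normalisations these are exactly the $(A_k+iB_k)/\sqrt 2$ of the statement. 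Since the drift multiplies the $k$-th Laurent mode by $-k$, it acts on $\cF$ as the operator $z\partial_z$ inside $\{|z|>1\}$, i.e. as $-(-\Delta)^{1/2}$ on the boundary trace, which is the evolution quoted for $\cW$.

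To pass from finitely many coefficients to the full $\mathscr{H}$-valued process I would prove tightness of $(\cF_n)$ in $D([0,\infty),\mathscr{H})$. Since $\mathscr{H}$ carries the topology of uniform convergence on each $\{|z|\ge r\}$, $r>1$, and the $\cF_n(t,\cdot)$ are holomorphic and vanish at $\infty$, the maximum principle on the exterior reduces this to control of $\sup_{|z|=r}|\cF_n(t,z)|\le\sum_{k\ge 1}|c^{-1/2}\gamma_k(\nt)|\,r^{-k}$ and of its increments in $t$. The a priori moment bounds yield, for each $r>1$, a uniform bound on $\bbE\sup_{s\le T}\sup_{|z|\ge r}|\cF_n(s,z)|^{2p}$ together with an Aldous-type modulus estimate $\bbE\!\left[\sup_{|z|\ge r}|\cF_n(t,z)-\cF_n(s,z)|^{2p}\mid\mathcal G_{\ns}\right]\le C_{r,p}\,|t-s|^{p}$ for $|t-s|$ small, whence tightness; combined with the finite–dimensional limit this produces a limit point $\cF$ and identifies its law. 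The limit has continuous paths because the per-particle jump $c^{-1/2}\sup_{|z|\ge r}|g_{\nt}(z)-g_{\nt-1}(z)|$ is $o(1)$, uniformly, under the hypotheses on $P$. Finally, since the map sending $f\in\mathscr{H}$ to its Laurent coefficients is continuous, the coefficients of $\cF$ are the $\Psi_k$, so $\cF(t,z)=\sum_{k\ge 1}\Psi_k(t)z^{-k}$; its distributional boundary trace $\cW(t,\theta):=\sum_{k\ge 1}\Psi_k(t)e^{-ik\theta}$ on $\{|z|=1\}$ is then precisely the process in the statement, and $\cF$ is its holomorphic extension to $\{|z|>1\}$.

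The main obstacle I expect is the combined control of the nonlinear remainder $R_k(n)$ and the verification of the universal kernel asymptotics for $\sum_k|\phi_k|^2\zeta^{-k}$. The remainder must be shown small enough, uniformly in $k$ and along the whole clock $nc\to 1$, that it affects neither the limiting drift nor the limiting bracket nor the modulus estimates; this is exactly where the quantitative "small/controlled particle" hypotheses on $P$ are used, and it needs the a priori estimates to be strong enough to absorb all error terms. A secondary subtlety is that the limit genuinely lives in $\mathscr{H}$ while its boundary trace is only a distribution, so one cannot pass to the limit up to $\{|z|=1\}$ in any function norm: the boundary process $\cW$ is accessed only through the (for $|z|>1$ convergent) Laurent series, and its identification with the stated Ornstein--Uhlenbeck system — equivalently, with the stochastic fractional heat equation — is carried out mode by mode.
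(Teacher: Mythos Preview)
Your approach is workable but genuinely different from the paper's. The paper never expands in Laurent coefficients; instead it writes
\[
\log\frac{\Phi_{\nt}(z)}{z}-\nt c
=\sum_{k=1}^{\nt}\Big(\log\frac{F(e^{-i\Theta_k}Z_{k,\nt})}{e^{-i\Theta_k}Z_{k,\nt}}-c\Big),
\qquad Z_{k,\nt}=F_{k+1}\circ\cdots\circ F_{\nt}(z),
\]
and observes that $(Z_{k,\nt})$ is adapted to the \emph{backward} filtration $\mathscr F_{k,\nt}=\sigma(\Theta_k,\ldots,\Theta_{\nt})$, so that each summand has conditional mean \emph{exactly} $c$: there is no nonlinear remainder at all. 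A martingale CLT then gives the pointwise limit; the field limit is established first on each fixed circle $e^{\sigma}\bbT$ as a $C(\bbT)$-valued process (Theorem~\ref{Tfields}), and finally promoted to $\mathscr H$ by noting that the holomorphic-extension map $C(e^{\sigma}\bbT)\to\mathscr H(\{|z|\ge e^{\sigma}\})$ is continuous (maximum principle) and invoking the continuous mapping theorem. Tightness is obtained not from coefficient moments but from a Kolmogorov moment estimate for the \emph{inverse} map $\tilde\Gamma_k(w)-w+kc$ in logarithmic coordinates, which is a forward martingale in $k$ (Lemma~\ref{Lnuovo}).

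Your forward recursion is a legitimate alternative, and in fact cleaner than you state: writing $F(w)^{-m}=\sum_{l\ge m}a_{m,l}w^{-l}$ one finds $\gamma_l(n)=e^{-lc}\gamma_l(n-1)+\phi_l e^{il\Theta_n}+R_l(n)$ with $R_l(n)=\sum_{m<l}\gamma_m(n-1)a_{m,l}e^{i(l-m)\Theta_n}$, which is \emph{linear} in the previous coefficients and satisfies $\bbE[R_l(n)\mid\mathcal G_{n-1}]=0$ exactly, so the drift has no correction term. The ``universal kernel'' input $c^{-1}\phi_k\to 2$ is precisely Corollary~\ref{Fcor}. Two points to tighten: (i) the cross-brackets between distinct modes are not literally zero --- the $R_l$'s couple mode $l$ to all $m<l$ --- but they are $O(c^{1/2})$ after rescaling, hence negligible; (ii) your boundary trace $\sum_{k\ge 1}\Psi_k(t)e^{-ik\theta}$ carries only negative Fourier modes, whereas the paper's $\cW$ is written with all $k\neq 0$; reconciling the two requires a word on Poisson versus holomorphic extension. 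What your route buys is a direct mode-by-mode identification of the limiting Ornstein--Uhlenbeck dynamics; what the paper's buys is that no remainder ever appears, at the price of the less transparent backward filtration and a separate tightness argument via $\tilde\Gamma$.
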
 
A precise statement of this result is given in Theorem \ref{Thol_new}. 
This provides an explicit construction of the limiting Gaussian holomorphic field $\cF$ and, perhaps more interestingly, of its boundary values. 
Note that, since $A_k(t) , B_k(t) \to \cN \big(0, \frac{1}{|k|}\big)$ in law as $t\to\infty$, we have 
   \[ \cW(t,\th ) \stackrel{t\to\infty}{\longrightarrow} \cW_\infty (\th ) \stackrel{(d)}{=} \sum_{k\neq 0} \frac{1}{\sqrt{|k|}} \Big( \frac{a_k + i b_k }{\sqrt{2}}\Big)  \frac{e^{ik\th}}{\sqrt{2\pi}}\, ,\]
for $(a_k)_k$, $(b_k)_k$ independent collections of i.i.d. standard Gaussian random variables  on $\bbR$.
This is (the complex version of) a well-known Fractional Gaussian Field (FGF) on the unit circle $\bbT$, and it can be realised as $\cW_\infty = (-\Delta)^{-1/4}W$, where $\Delta$ denotes the Laplace operator, and $W$ is white noise on $\bbT$. 
We remark that, in general, the FGF on $\bbT$ given by $(-\Delta )^s W$ defines a true function only for $s <-1/4$, and otherwise it takes values in the space of distributions on $\bbT$. 
It can be shown in greater generality (see the discussion in \cite{duplantier2014log} for FGFs on $\bbR^d$, $d\geq 1$) that in correspondence of the  critical value of the parameter $s$, which depends on the dimension of the underlying space, one obtains a log-correlated Gaussian field. 


\subsection*{Overview of related work} 
Fractal patterns are ubiquitous in nature, and many attempts have been made to obtain a rigorous mathematical description of their formation. 

In 1961 M. Eden \cite{eden1961two} introduced a lattice-based model of growing clusters, now called Eden model. The growth mechanism is as follows: start with a single site, and at each step choose one site on the outer boundary of the current cluster (i.e. the set of sites outside of the cluster, adjacent to at least one of the cluster's sites) uniformly at random, and add it to the cluster. 

A similar growth model, so called Diffusion Limited Aggregation (DLA), was introduced by T.A. Witten and L. Sander \cite{witten1981diffusion} in 1981. In this model, at each step a new site is sampled according to the harmonic measure of the boundary of the current cluster from $\infty$, and then added to the cluster. When, instead, one samples the new site according to the $\eta$-th power of the harmonic measure of the cluster boundary from $\infty$, one obtains the $\eta$-Dielectric Breakdown Model (DBM$_{\eta}$), which was introduced by L. Niemeyer, L. Pietronero and H.J. Wiesmann \cite{niemeyer1984fractal} in 1984. Thus, the family of DBM$_{\eta}$ models interpolates from the Eden model ($\eta =0$) to DLA ($\eta =1$). 

Although these models are simple to define, they have proven very difficult to study rigorously. Moreover, some features of large clusters, such as the Hausdorff dimension, have been found
\cite{ball1985anisotropy,meakin1987structure} to depend on the underlying lattice structure, which is somehow unsatisfactory. 

To overcome these problems, in 1988 M. Hastings and L. Levitov \cite{hastings1998laplacian} proposed a one parameter family of off-lattice models, so called HL($\a$) models for $\a \in [0,\infty )$. The case $\a =0$ has already been described in detail. When $\a >0$ the growth mechanism is the same, except that at each step the size of the  new particle $P_n$ is renormalised so that its logarithmic capacity is given by
	\begin{equation} \label{HLalpha}
	c_n = \frac{c}{| \Phi_{n-1}'(e^{i\Theta_n})|^{\a}} \, . 
	\end{equation}
If $\a =2$ this results in growing clusters of particles roughly of the same 
size, and in general \eqref{HLalpha} has the effect of attenuating the natural distortion of the $\a =0$ model. 
In \cite{hastings1998laplacian}   Hastings and Levitov argue by comparing local growth rates that the choice $\a =1$ should correspond to the Eden model, $\a=2$ to DLA and in general $\a \in (1,2)$ to DBM$_{\eta -1}$. 

Although $\a >0$ is needed for these models to be realistic, the re-normalization \eqref{HLalpha} creates long range dependences which make them very difficult to analyse. In fact, to the best of our knowledge there are no rigorous results on (the non-regularised version of) HL($\a$) models for $\a >0$. A first regularised version of HL($\a$) appears in \cite{rohde2005some}, in which S. Rohde and M. Zinsmeister obtained bounds for the Hausdorff dimension of suitably regularised clusters for $\a \in [0,2]$. More recently, a different type of regularization was considered by F. Viklund, A. Sola and A. Turner in \cite{viklund2013small}, where they showed that the limiting shape of regularised clusters is given by a disc for any $\a >0$, provided that the regularization is strong enough. 

The case $\a =0$ does not feature such long range dependences, and is much better understood. In \cite{rohde2005some} Rohde and Zinsmeister obtained a scaling limit for HL($0$) clusters as the particle size is kept fixed while $n\to\infty$. Moreover, they showed that the boundary of these limiting clusters is almost surely one-dimensional.  
More recently, in \cite{norris2011weak,norris2012hastings} Norris and Turner obtained a detailed description of HL($0$) clusters in the small particle limit. More precisely, they proved that as $n\to\infty$, $c\to 0$ and $nc \to t\in (0,\infty)$, these clusters almost surely fill a disc with radius $e^t$ centred at the origin. Moreover, they showed that the 
ancestral tree structure within the cluster converges, always in the small particle limit, to the Brownian Web \cite{fontes2004brownian}, thus providing an interesting connection between these two a priori unrelated models.

\subsection*{Organization of the paper}
We present a detailed proof of the global fluctuations result. Theorem \ref{TlocBig} on local fluctuations is obtained by the same arguments, and we leave its discussion for Appendix \ref{Aloc}. The paper is organised as follows. 
In Section \ref{S2} we collect some preliminary estimates for the basic conformal maps $F,G$. We then introduce in Section \ref{S3} our main tools, namely two sequences of backwards martingale difference arrays \eqref{XY}, and prove Theorem \ref{pointwise} on pointwise fluctuations. 
This result is then generalised in Sections \ref{S4}-\ref{Shol}, to obtain a functional central limit theorem for the process $(t,z)\mapsto \log \Phi_\nt (z)$, viewed as a stochastic process taking values in the space of 
holomorphic functions on $\{|z|>1\}$ (cf. Theorems \ref{Tfields} and \ref{Thol_new}). 
Finally, in Section \ref{explicit} we present an explicit construction of the boundary values of the  limiting fluctuation process. These are shown to be given by a distribution-valued continuous process, 
 which is rigorously defined as a Ornstein-Uhlenbeck dynamics in a suitable infinite-dimensional Hilbert space. We conclude the paper by collecting some open questions in Section \ref{Sopen}.

\subsection*{Acknowledgements} 
I am extremely grateful to James Norris for his guidance, support, and help with many of the arguments in the paper.
I am also very thankful to Alan Sola for several interesting discussions, and to Henry Jackson for providing the simulations in Figure \ref{henry}. 
\vspace{0.5cm} 

\section{Preliminary estimates} \label{S2}
Fix a particle $P$ as in the introduction, and, if $\bbD $ denotes the open unit disc, let $K_0 = \overline{\bbD}$, $K = \overline{\bbD} \cup P$. Moreover, set
 $D_0 = (\bbC \cup \{ \infty \} )\setminus K_0$,  $D = (\bbC \cup \{ \infty \}) \setminus K$. 
It follows from the Riemann Mapping Theorem that  there exists a unique conformal map\footnote{Throughout this paper, by conformal map we mean a conformal isomorphism.} $F:D_0 \to D$ such that $F(z) = e^c z + \mathcal{O}(1)$ as $|z| \to \infty$, for some constant $c\in\bbR$. We assume that the particle $P$ is regular enough so that $F$ extends continuously to the boundary of $D_0$.
 Set $G = F^{-1}$. 

Conformal maps are very rigid, and
simply from the definition of $F,G$ we can deduce the following properties:
\begin{itemize}
\item[(P1)] $|F(z)|>|z|$ for all $z \in D_0$, $|G(z)| < |z|$ for all $z \in D$, 
\item[(P2)] there exists a constant $C>0$ such that $|F(z)| \leq C |z| $ for all $z \in D_0$ and $|G(z)| \geq |z| /C$ for all $z \in D$.
\end{itemize}
Indeed, (P1) follows trivially from Schwarz lemma, while (P2) is a consequence of the prescribed behaviour at infinity for $F,G$.
\begin{Notation}
Throughout the paper, $C$ denotes a finite, positive constant which can change from line to line, and which is either absolute or only depends on the maps $F,G$. Whenever this constant depends on other parameters, say $\a , \b \ldots$, we make it explicit in the notation by using $C(\a , \b \ldots )$. 
\end{Notation}

In order to obtain finer distortion estimates for the maps $F,G$ it is often useful to relate the logarithmic capacity $c$ to geometric properties of the particle $P$. Assume the following:
	\begin{Assumption} \label{As1}
There exists $\d >0$ such that 
	 \begin{equation} \label{particle}
	 P \subseteq \{ z \in \bbC : |z-1| \leq \d \} , 
	 \quad 1+\d \in P ,
	 \quad P = \{ \bar{z} : z \in P \} . 
	 \end{equation}
	 \end{Assumption}
We regard $\d$ as measuring the diameter of the particle $P$.
\begin{Remark}
Assumption \ref{As1} is in force throughout the paper. 
\end{Remark}
The following result appears in \cite{norris2012hastings} (cf. Proposition 4.1 and Corollary 4.2 therein).
\begin{Proposition}\label{teoNT}
There exists an absolute constant $C>0$ such that, for all $z \in D$: $|z-1| >2\d$, it holds:
\begin{equation} \label{tNT}
\bigg| \log \Big( \frac{G(z)}{z} \Big) +c \bigg| \leq \frac{Cc}{|z-1|} \, , \qquad 
\bigg| \frac{\dd}{\dd z } \log  \Big( \frac{G(z)}{z} \Big) \bigg| \leq \frac{Cc}{|z-1|^2} \, . 
\end{equation}
Moreover, 
	\begin{equation}\label{CD}
	\frac{\d^2}{6} \leq c \leq \frac{3\d^2}{4}
	\end{equation} 
for $\d$ small enough.
\end{Proposition}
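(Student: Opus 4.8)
The proposition has two parts, which I would prove separately; the distortion inequalities are the substantive part. For the \emph{capacity bounds}, note that $e^{c}$ is the transfinite diameter $\operatorname{cap}(K)$ of $K$ (normalised so that $\operatorname{cap}(\overline{\bbD})=1$), which is monotone under inclusion, so Assumption~\ref{As1} sandwiches $c$ between the capacities of two explicit sets. From above, $K=\overline{\bbD}\cup P\subseteq\overline{\bbD}\cup\overline{B(1,\d)}$, so $c\le\log\operatorname{cap}(\overline{\bbD}\cup\overline{B(1,\d)})=\tfrac{\d^{2}}{4}+O(\d^{3})$, which I would obtain from a Hadamard variational estimate, or from the identity $c=\tfrac1{2\pi}\int_{-\pi}^{\pi}\log|F(e^{i\th})|\,\dd\th$ (valid because $c$ is the constant Fourier coefficient of $\log(F(e^{i\th})/e^{i\th})$) combined with $|I|\le C\d$ below; either way $c\le\tfrac34\d^{2}$ for $\d$ small. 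From below, $P$ is connected with $1$ a limit point and $1+\d\in P$, so $P\cup\{1\}$ is a continuum joining $1$ to $1+\d$ and $K\supseteq\overline{\bbD}\cup(P\cup\{1\})$; a symmetrisation argument (the radial slit minimises $\operatorname{cap}(\overline{\bbD}\cup\,\cdot\,)$ over continua joining $1$ to $\{|z|=1+\d\}$) gives $c\ge\log\operatorname{cap}(\overline{\bbD}\cup[1,1+\d])$, and the Joukowski map $z\mapsto z+1/z$ carries $\overline{\bbD}\cup[1,1+\d]$ onto the segment $[-2,\,2+\tfrac{\d^{2}}{1+\d}]$, of capacity $1+\tfrac{\d^{2}}{4(1+\d)}$; hence $c\ge\log\!\big(1+\tfrac{\d^{2}}{4(1+\d)}\big)\ge\tfrac{\d^{2}}{6}$ for $\d$ small.

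For the \emph{distortion estimates}, set $\psi(z):=\log\frac{G(z)}{z}+c$. Since $0\in K$, both $G(z)$ and $z$ are non-vanishing on $D$ with equal winding at $\infty$, so $\psi$ is single-valued and holomorphic on $D$ with $\psi(\infty)=0$, and $\tfrac{\dd}{\dd z}\log\frac{G(z)}{z}=\psi'(z)$; it suffices to bound $|\psi|$ and $|\psi'|$ on $\{|z-1|>2\d\}$. The key step is to transport the problem to the exterior disc via $F$: the function $\tilde\psi:=\psi\circ F$ is holomorphic on $\{|\zeta|>1\}$, equals $c-\log\frac{F(\zeta)}{\zeta}$, and vanishes at $\infty$, so the Schwarz integral formula for $\{|\zeta|>1\}$, applied with boundary real part $c-\log|F(e^{i\th})|$, gives (after the constant term cancels, using $\tilde\psi(\infty)=0$, equivalently $\tfrac1{2\pi}\int_{I}\log|F(e^{i\th})|\,\dd\th=c$, with $I:=\{\th:|F(e^{i\th})|>1\}$ the preimage of the exposed part of $\partial P$)
\[ \tilde\psi(\zeta)=-\frac1\pi\int_{I}\log|F(e^{i\th})|\,\frac{e^{i\th}}{\zeta-e^{i\th}}\,\dd\th\,,\qquad |\zeta|>1. \]
Substituting $\zeta=G(z)$ then gives a formula for $\psi(z)$, and differentiating under the integral sign one for $\psi'(z)$ carrying an extra factor $(G(z)-e^{i\th})^{-1}G'(z)$.

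From here everything reduces to four estimates: (i) $|\log|F(e^{i\th})||\le\log(1+\d)\le\d$ on $I$; (ii) $|I|\le C\d$ and $I\subseteq\{|\th|\le C\d\}$, both harmonic-measure bounds for a $\d$-bump on the unit circle (the first also re-proving $c\le C\d^{2}$); (iii) $|G(z)-e^{i\th}|\ge\tfrac1C|z-1|$ for every $\th\in I$ whenever $z\in D$ and $|z-1|>2\d$; and (iv) $|G'(z)|\le C$ for such $z$. Granting these, $|\psi(z)|\le\tfrac1\pi\,\d\,|I|\cdot\tfrac{C}{|z-1|}\le\tfrac{C\d^{2}}{|z-1|}\le\tfrac{Cc}{|z-1|}$ by the capacity lower bound, and likewise $|\psi'(z)|\le\tfrac{Cc}{|z-1|^{2}}$, which is \eqref{tNT}. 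The step I expect to be the main obstacle is (iii): when $z$ is close to $\partial\bbD$ but far from $1$, the crude bound $|G(z)-e^{i\th}|\ge|G(z)|-1$ is far too weak, so I would combine the localisation of $\{e^{i\th}:\th\in I\}$ near $1$ from (ii) with a Koebe-type statement that, on $\{|z-1|>2\d\}$, the map $G$ has bounded distortion relative to the particle scale $\d$; the cleanest route to controlling $G$ near the free boundary is to extend it by Schwarz reflection across the arc $\partial\bbD\setminus\overline{B(1,\d)}$ of $\partial D$ (on which $|G|\equiv1$) and run a Koebe/Harnack chaining argument on the doubled domain, which also yields (iv) via (P2) and a Cauchy estimate on a disc of radius $\asymp\operatorname{dist}(z,\partial\bbD)$.
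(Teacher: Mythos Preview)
The paper does not itself prove Proposition~\ref{teoNT}; it is quoted from \cite{norris2012hastings} (Proposition~4.1 and Corollary~4.2 there). However, the Norris--Turner method is reproduced in the paper's proof of Corollary~\ref{Pref}, which explicitly ``follow[s] the proof of \cite{norris2012hastings}, Proposition~4.1'', so a comparison is possible.

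Your route is correct and genuinely different. Norris--Turner do not work with $G$ directly: they enclose $P$ in an \emph{explicit} auxiliary particle $P_{1}=\{|\tfrac{z-1}{z+1}|\le r\}$, $r=\d/(2-\d)$, whose mapping-out function $G_{1}(z)=\dfrac{z(\gamma z-1)}{z-\gamma}$ is rational. Composing with $F_{1}=G_{1}^{-1}$ transports the Poisson representation of $u=\Re\log\tfrac{G(z)}{z}$ to $D_{0}$, and every geometric estimate is then read off from the explicit formula for $G_{1}$; in particular the analogue of your step~(iii), namely $\operatorname{dist}(G_{1}(z),G_{1}(A))\ge|z-1|/C$, is a direct computation, and the capacity bounds also come from this comparison (the logarithmic capacity of $\overline{\bbD}\cup P_{1}$ is $-\log\gamma$).

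You instead write the Schwarz integral directly for $\psi\circ F$ on $\{|\zeta|>1\}$ and reduce to (i)--(iv). Steps (i), (ii), (iv) are routine; the load-bearing step is (iii), and your plan---Schwarz-reflect $G$ across the free arc $\partial\bbD\setminus\overline{B(1,\d)}$ and chain Koebe/Harnack on the doubled domain to get a lower Lipschitz bound $|G(z)-1|\ge c_{0}|z-1|$---is the standard device and works here. The trade-off is clear: Norris--Turner's explicit $G_{1}$ turns (iii) into a one-line computation at the cost of introducing an auxiliary object, while your argument is more intrinsic but front-loads general distortion theory. For~\eqref{CD}, your Joukowski computation for the lower bound is correct once the symmetrisation claim (radial slit minimises $\operatorname{cap}(\overline{\bbD}\cup\,\cdot\,)$ among continua joining $1$ to $\{|z|=1+\d\}$) is justified; this is a circular-symmetrisation/P\'olya--Szeg\H{o} statement and does hold, but it is the one place where you invoke a nontrivial extremal result rather than a calculation.
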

In light of \eqref{CD} we use $c$ and $\d$ interchangeably, and all statements are intended to hold for $c,\d$ small enough. Combining \eqref{tNT} and \eqref{CD} we deduce the following improved bound.
\begin{Corollary} \label{Pref}
There exists an absolute constant $C>0$ such that, for all $z \in D$: $|z-1| >2\d$, it holds:
	\begin{equation}\label{Gz}
	\bigg| \log \Big( \frac{G(z)}{z} \Big) +c \, \frac{z+1}{z-1}\bigg| 
	\leq \frac{Cc^{3/2}|z|}{|z-1|^2} . 
	\end{equation}
\end{Corollary}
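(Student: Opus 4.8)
The plan is to derive an exact integral representation for $\log(G(z)/z)$ and simply read off its leading term. The object to work with is $\Psi(\zeta):=\log(\zeta/F(\zeta))$, the single-valued holomorphic branch on $D_0=\{|\zeta|>1\}$ normalised by $\Psi(\infty)=-c$, so that $\log(G(z)/z)=\Psi(G(z))$. First I would record three facts about $\Psi$ on the unit circle: $\Re\Psi(e^{i\theta})=-\log|F(e^{i\theta})|\le 0$ (by property (P1), since $F$ maps $\overline{D_0}$ into $\{|w|\ge 1\}$ and extends continuously to $\bbT$); its circle average equals $\Re\Psi(\infty)=-c$; and, most importantly, $\Re\Psi(e^{i\theta})$ vanishes wherever $F(e^{i\theta})$ stays on $\bbT$, hence is supported on the arc $J:=F^{-1}(\partial P\setminus\bbT)$. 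Since $\partial P\setminus\bbT\subseteq\{|w-1|\le\d\}$ and $F$ commutes with conjugation (because $P=\bar P$), one should check that $J\subseteq\{e^{i\theta}:|e^{i\theta}-1|\le C\d\}$.

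Granting that, I would apply the Schwarz formula on $D_0$, $\Psi(\zeta)=\frac1{2\pi}\int_{-\pi}^{\pi}\Re\Psi(e^{i\theta})\,\frac{\zeta+e^{i\theta}}{\zeta-e^{i\theta}}\,d\theta$ (the additive imaginary constant drops out since $\Psi(\infty)$ is real). Using $-c=\frac1{2\pi}\int_{\bbT}\Re\Psi$, evaluating at $\zeta=G(z)$, and using that $\Re\Psi$ is supported on $J$ together with the algebraic identity $\frac{w+e^{i\theta}}{w-e^{i\theta}}-\frac{z+1}{z-1}=\frac{2(ze^{i\theta}-w)}{(w-e^{i\theta})(z-1)}$, this gives
\[ \log\frac{G(z)}{z}+c\,\frac{z+1}{z-1}=\frac1{2\pi}\int_{J}\Re\Psi(e^{i\theta})\,\frac{2\big(ze^{i\theta}-G(z)\big)}{\big(G(z)-e^{i\theta}\big)(z-1)}\,d\theta. \]
Now I would split into two ranges. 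For $2\d<|z-1|\le M\d$ with $M$ a fixed absolute constant, the left-hand side is $\le\frac{Cc}{|z-1|}+\frac{2c}{|z-1|}\le C\d$ by \eqref{tNT} and \eqref{CD}, while $c^{3/2}|z|/|z-1|^2\ge c/(M\d)^2$ is also of order $\d$, so the bound is immediate. For $|z-1|>M\d$ with $M$ large, \eqref{tNT}--\eqref{CD} give $|G(z)-z|\le C\d|z|$, which combined with $|G(z)|\le|z|$ from (P1) and $|e^{i\theta}-1|\le C\d$ on $J$ yields $|G(z)-1|\ge\frac34|z-1|$, $|G(z)-e^{i\theta}|\ge\frac12|z-1|$ and $|ze^{i\theta}-G(z)|\le C\d|z|$; hence the integrand is $O\big(\d|z|/|z-1|^2\big)$ pointwise, and integrating against $|\Re\Psi|$ with $\frac1{2\pi}\int_{\bbT}|\Re\Psi|=c$ gives $|\log(G(z)/z)+c\frac{z+1}{z-1}|\le Cc\d|z|/|z-1|^2$, which is $\le Cc^{3/2}|z|/|z-1|^2$ by $\d\le\sqrt{6c}$ from \eqref{CD}.

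The main obstacle is the localisation step: that $\Re\Psi$ is supported in a $C\d$-arc about $1$, equivalently $\omega_D(\infty,\partial P\setminus\bbT)\le C\d$. This is where Assumption \ref{As1} is used in an essential way — it should follow from standard harmonic-measure and distortion estimates for the small particle (a Beurling-type bound: Brownian motion from $\infty$ reaches within $\d$ of $1$ before hitting $\overline{\bbD}$ with probability $O(\d)$), or one may simply quote the sharper form of the Norris--Turner estimate in \cite{norris2012hastings}. Everything else is bookkeeping with \eqref{tNT}, \eqref{CD} and (P1); in particular the $O(\d)$ gain produced by the localisation is turned into the stated power $c^{3/2}$ exactly through $\d\asymp\sqrt c$, which is the reason the exponent in the corollary is $3/2$ and not $1$.
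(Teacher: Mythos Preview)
Your argument is correct and runs on the same engine as the paper's: the boundary density $-\Re\Psi$ on $\bbT$ has total mass $c$ and is supported in an arc of width $O(\d)$ about $1$, so that replacing the Schwarz kernel $\frac{\zeta+e^{i\theta}}{\zeta-e^{i\theta}}$ by its value at $\theta=0$ costs an extra factor $\d\asymp\sqrt c$. The two proofs differ in packaging. First, you apply the full complex Schwarz formula to $\Psi$, handling real and imaginary parts at once; the paper instead bounds the real part $u=\Re\log(G(z)/z)$ via the Poisson integral, and then recovers the imaginary part $v$ through Cauchy--Riemann (to control $\nabla v$) followed by integration along the ray to $\infty$. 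Your route is tidier in this respect. Second, for the localisation step you defer to a Beurling-type estimate or a citation; the paper makes this step concrete by introducing an explicit comparison particle $P_1\supset P$ with map $G_1(z)=\frac{z(\gamma z-1)}{z-\gamma}$, so that $u\circ F_1$ is visibly supported on the arc $\{|\theta|<\theta_0\}$ with $\theta_0=\cos^{-1}\gamma\asymp\d$. That explicit device is exactly what fills the gap you flag as the main obstacle; once it is in place, your bookkeeping in the two ranges $2\d<|z-1|\le M\d$ and $|z-1|>M\d$ is equivalent to the paper's estimates.
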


\begin{proof}
 We follow the proof of \cite{norris2012hastings}, Proposition 4.1. 
Let $u,v$ denote the real and imaginary part of $\log \frac{G(z)}{z} $ respectively, so that they are harmonic functions on $D$. Then by optional stopping $u(z) = -\bbE ( \log |B_T|) <0$, $T$ being the first hitting time of $K$ for a Brownian Motion $B$ starting from $z$. 
Introduce the particle $P_1 \supset P$ defined by $P_1 = \big\{ z\in D_0 : \big| \frac{z-1}{z+1}\big| \leq r \big\}$, for $r = \d /(2-\d )$, and set $D^1 = (\bbC \cup \{\infty\})\setminus (\overline{\bbD} \cup P_1)$. Then the unique conformal map $G_1:D^1\to D_0$ satisfying $G_1(\infty ) =\infty$ and $G_1'(\infty ) >0$ is given by 
	\[ G_1(z)=\frac{z(\g z-1)}{z-\g} \qquad \mbox{ for } \; \g = \frac{1-r^2}{1+r^2}.\]
Set $F_1 = G_1^{-1}$, and $A = \{ z \in \de P_1 : |z|>1 \}$. Then $G_1(A) = \{ e^{i\th} : |\th |<\th_0 \}$ with $\th_0 = \cos^{-1}\g$. 
\begin{figure}[!ht]
    \begin{center}
     \centering
  \mbox{\hbox{
  \includegraphics[width=0.45\textwidth]{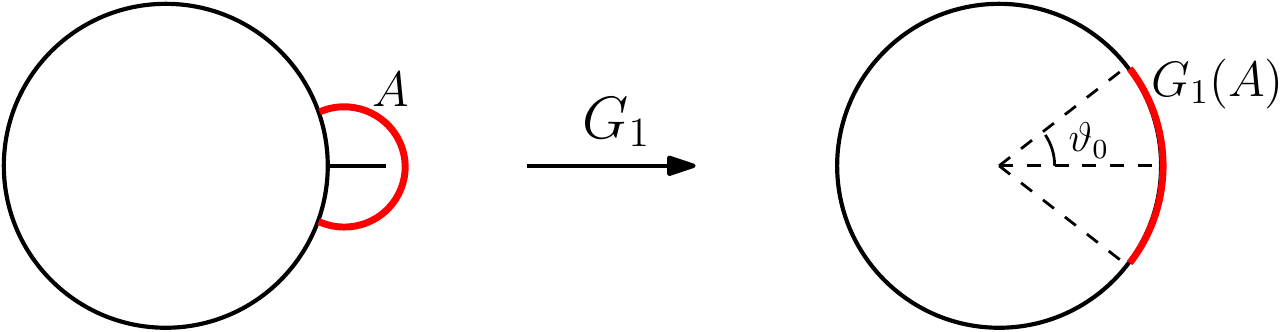}}} 
       \end{center}
 \end{figure} Moreover, $u \circ F_1$ is harmonic and bounded on $D_0$ and, using that $\frac{1}{2\pi}\int_{|\th|\leq \th_0} (u\circ F_1) (e^{i\th})  \dd \th = -c$,  the optional stopping theorem  yields 
	\[ (u\circ F_1 )(z) = -c + \frac{1}{2\pi} \int_{|\th|\leq \th_0} (u\circ F_1) (e^{i\th}) \Re \Big( \frac{2e^{i\th}}{z-e^{i\th}} \Big) \dd \th \, . \]
It follows that
	\[  (u\circ F_1 )(z) + c + \Re \Big( \frac{2c}{z-1} \Big) = 
	\frac{1}{2\pi} \int_{|\th|\leq \th_0} (u\circ F_1) (e^{i\th}) \Big[ \Re \Big( \frac{2e^{i\th}}{z-e^{i\th}} \Big) - \Re \Big( \frac{2}{z-1} \Big) \Big]\dd \th \, . \]
Using \eqref{CD}, then, we find $1-\cos \th_0 = 1-\g \asymp \d^2/2 \leq C \sqrt{c}$, from which 
	\[ \bigg| \Re \Big( \frac{2e^{i\th}}{z-e^{i\th}} \Big) - \Re \Big( \frac{2}{z-1} \Big) \bigg| 
	\leq \frac{2|z| |1-e^{i\th_0}| }{|z-e^{i\th}| | z-1|} 
	\leq \frac{C|z| \sqrt{c} }{|z-e^{i\th}| | z-1|} 
	\leq \frac{C|z| \sqrt{c} }{\textrm{dist}(z,G_1(A))^2} \, . \]
Since $(u\circ F_1)(e^{i\th})<0$ for all $\th $ in the integration range, we find
	\[ \bigg| (u\circ F_1 )(z) + c + \Re \Big( \frac{2c}{z-1} \Big) \bigg| 
	\leq \frac{C|z| \sqrt{c} }{\textrm{dist}(z,G_1(A))^2} 
	\bigg| \frac{1}{2\pi} \int_{|\th|\leq \th_0} (u\circ F_1) (e^{i\th}) \dd \th \bigg| 
	= \frac{C|z| c^{3/2} }{\textrm{dist}(z,G_1(A))^2} \, . \]
Now take $G_1(z)$ in place of $z$, to get 
	\begin{equation}\label{long}
	 \begin{split} 
	\bigg| u(z) + c + \Re \Big( \frac{2c}{z-1} \Big) \bigg| &
	\leq \bigg| u(z) + c + \Re \Big( \frac{2c}{G_1(z)-1} \Big) \bigg| 
	+ \bigg| \Re \Big( \frac{2c}{G_1(z)-1} \Big) - \Re \Big( \frac{2c}{z-1} \Big) \bigg| 
	\\ & \leq \frac{C|z| c^{3/2} }{\textrm{dist}(G_1(z),G_1(A))^2} 
	+ \frac{2c|G_1(z)-z| }{|G_1(z)-1| |z-1|} \, ,
	\end{split} 
	\end{equation}
where in the second inequality we have used that $|G(z)| < |z|$.
Using the explicit expression for $G_1$, one shows that $|G_1(z)-1| > |z-1|/2$ and $|G_1(z)-z|  \leq C\sqrt{c} |z| $ for $\d$ small enough, from which 
	\begin{equation}\label{facile}
	  \frac{2c|G_1(z)-z| }{|G_1(z)-1| |z-1|}  < \frac{C |z| c^{3/2}}{|z-1|^2}\, . 
	  \end{equation}
Moreover, reasoning as in \cite{norris2012hastings} one shows that there exists an absolute constant $C_1$ such that $\textrm{dist}(G_1(z),G_1(A)) \geq |z-1|/C_1$. Putting this together with \eqref{long} and \eqref{facile} we finally obtain
	\[ \bigg| u(z) + c + \Re \Big( \frac{2c}{z-1} \Big) \bigg| 
	\leq \frac{C|z| c^{3/2} }{|z-1|^2}  \]
for all $z\in D$ such that $|z-1|>2\d$. 

Now note that $u(z) +c + \Re \big( \frac{2c}{z-1} \big) $ and $v(z) + \Im \big( \frac{2c}{z-1} \big) $ are the real and imaginary part of the holomorphic function $z \mapsto \log \frac{G(z)}{z} + c + \frac{2c}{z-1} $ on $D \cap \{ z : |z-1|>2\d\}$. It then follows by Cauchy's integral formula that
	\[\bigg|  \nabla \bigg( v (z) + \Im \Big( \frac{2c}{z-1} \Big) \bigg) \bigg| 
	= \bigg|  \nabla \bigg( u (z) + c+ \Re \Big( \frac{2c}{z-1} \Big) \bigg) \bigg| 
	\leq \frac{C|z| c^{3/2} }{|z-1|^3} \, . \]
Finally, using that $\big| v(z) + \Im \big( \frac{2c}{z-1} \big) \big| \to 0$ as $|z|\to\infty$, we get
	\[ \bigg|   v (z) + \Im \Big( \frac{2c}{z-1} \Big) \bigg| 
	\leq \int_0^\infty
	\bigg|  \nabla \bigg( v (z + s(z-1)) + \Im \Big( \frac{2c}{z+s(z-1)-1} \Big) \bigg) \bigg| 
	\cdot |z-1| \dd s 
	\leq \frac{C|z| c^{3/2} }{|z-1|^2} \, , \]
which concludes the proof.
\end{proof}

We combine Proposition \ref{teoNT} and Corollary \ref{Pref} to obtain corresponding estimates for the function $F$, which are collected below.
\begin{Corollary} \label{Fcor}
There exists a constant $C>0$ such that, for all $z\in D_0 $ with $ |F(z)-1|>2\d$, it holds:
\[ | F(z) - e^c z | \leq \frac{C c |z|}{|z|-1} \, , 
\qquad \quad 
\bigg| \log \Big( \frac{F(z)}{z} \Big) -c \, \frac{z+1}{z-1}\bigg| 
	\leq \frac{Cc^{3/2}|z|^2}{(|z|-1)^3} .\]
\end{Corollary}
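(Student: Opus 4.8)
The plan is to transfer the estimates on $G=F^{-1}$ from Proposition \ref{teoNT} and Corollary \ref{Pref} to $F$ by a change of variables, exploiting that $F$ and $G$ are inverse to each other and that the relevant regions are preserved up to multiplicative constants. Write $w = F(z)$, so $z = G(w)$ and $\log(F(z)/z) = \log(w/G(w)) = -\log(G(w)/w)$. The hypothesis $|F(z)-1| = |w-1| > 2\d$ is exactly the condition under which Corollary \ref{Pref} applies to $w$, giving
\[ \bigg| \log\Big(\frac{F(z)}{z}\Big) - c\,\frac{w+1}{w-1} \bigg| \leq \frac{Cc^{3/2}|w|}{|w-1|^2}. \]
So the first task is to replace $w=F(z)$ by $z$ in the main term $c\,\frac{w+1}{w-1}$ and in the error bound, controlling the discrepancy.

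For this I would use property (P2), which gives $|z| \leq |F(z)| = |w| \leq C|z|$, and Proposition \ref{teoNT} applied to $w$, which gives $|G(w) - w e^{-c}| \le$ (something small), hence $|F(z) - z| = |w - G(w)|$ is of order $c|z|$ when $|w-1|$ is bounded below — more precisely one gets $|w - z| \leq Cc|z|/(|z|-1)$ from the first estimate of Corollary \ref{Pref} after exponentiating, which is in fact the first claimed bound $|F(z) - e^c z| \le Cc|z|/(|z|-1)$ (note $|e^c z - z| \le Cc|z|$ is absorbed). Then $\big|\frac{w+1}{w-1} - \frac{z+1}{z-1}\big| = \frac{2|w-z|}{|w-1||z-1|}$, and I would show $|w-1| \gtrsim |z-1|$ (say $|w-1| \ge |z-1|/2$, using that $|F(z)-z|$ is small relative to $|z-1|$ — this needs $|z-1|$ comparable to $|w-1| > 2\d$, which should follow since $F$ moves points by at most $O(c|z|/(|z|-1)) = O(\sqrt c\,|z|/(|z|-1))$). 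Combining, the correction term is bounded by $\frac{Cc\cdot c|z|/(|z|-1)}{|z-1|^2} \lesssim \frac{Cc^{3/2}|z|^2}{(|z|-1)^3}$, using $|z-1| \geq |z|-1$ crudely and inserting a factor $\sqrt c \le 1$; similarly the error term $\frac{Cc^{3/2}|w|}{|w-1|^2} \lesssim \frac{Cc^{3/2}|z|}{(|z|-1)^2} \le \frac{Cc^{3/2}|z|^2}{(|z|-1)^3}$. That yields the second estimate.

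The main obstacle I anticipate is the bookkeeping around the region conditions: one must make sure that when $|F(z)-1| > 2\d$ holds, the point $z$ itself is far enough from $1$ (and from $\de\bbD$) for all the auxiliary estimates to be invoked, and that the various comparisons $|w-1| \asymp |z-1|$, $|w| \asymp |z|$ hold with absolute constants. There is a mild subtlety near $z$ close to the unit circle but away from the particle: there $|z|-1$ may be tiny while $|z-1|$ is not, and the powers of $(|z|-1)$ in the denominator are what make the $F$-estimates weaker than the $G$-estimates (an extra power of $|z|/(|z|-1)$ appears). I would handle this by carrying $|z|-1$ and $|z-1|$ as separate quantities throughout and only at the end using $|z|-1 \le |z-1|$ to consolidate. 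Everything else is routine manipulation of explicit Möbius-type expressions and the triangle inequality, so once the geometry of the regions is pinned down the proof is short.
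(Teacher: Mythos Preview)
Your approach is essentially the paper's own: set $w=F(z)$, apply the $G$-estimates from Proposition~\ref{teoNT} and Corollary~\ref{Pref} at $w$, then replace $w$ by $z$ in both the main term $c\,\frac{w+1}{w-1}$ and the error via the triangle inequality. Two small bookkeeping corrections: the term $|e^cz-z|\le Cc|z|$ is \emph{not} absorbed by $Cc|z|/(|z|-1)$ for large $|z|$, so the correct bound is $|w-z|\le Cc|z|^2/(|z|-1)$ (which still feeds into the stated final estimate); and instead of arguing $|w-1|\ge|z-1|/2$, which is delicate when $|z|-1$ is small, the paper simply uses $|w-1|\ge|w|-1>|z|-1$ from (P1) together with $|G(w)-1|=|z-1|\ge|z|-1$, which is immediate and suffices.
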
 
\begin{proof}
For the first inequality, note that \eqref{tNT} readily implies that 
\begin{equation}\label{Gconseq}	
| G(z) - e^{-c} z | \leq \frac{Cc|z|}{|z-1|} 
\end{equation}
for all $z \in D : |z-1| > \d $, and $\d$ small enough. Therefore, by setting $w = F(z)$ and using (P1)-(P2), we obtain 
	\begin{equation}\label{barc} 
	| F(z) - e^c z | = e^c | G(w) - e^{-c} w| 
	\leq \frac{Cc|w|}{|w-1|} \leq \frac{C' c |z|}{|z|-1} \, , 
	\end{equation}
for all $z \in D_0$ such that $|F(z)-1|>2\d$ and $\d$ small enough, as claimed. 

For the second inequality, note that, since $|F(z) -1| >  2\d$ by assumption, we can use \eqref{Gz} to get 
	\[ \begin{split}
	\bigg| \log  \frac{F(z)}{z} -c \, \frac{z+1}{z-1}\bigg|  & 
	 \leq \bigg| \log \frac{G(w)}{w} +c \, \frac{w+1}{w-1}\bigg|
	+ 2c \bigg| \frac{1}{w-1} - \frac{1}{G(w)-1} \bigg|    \\
	& \leq \frac{Cc^{3/2} |w|}{|w-1|^2} + \frac{2c|G(w)-w|}{|w-1| |G(w)-1| } 
	 \leq \frac{Cc^{3/2} |z|}{(|z|-1)^2} + \frac{2c|G(w)-w|}{(|z|-1)^2 } \, . 
	\end{split} \]
Moreover, it follows from  \eqref{barc} that
	\[ |G(w)-w| \leq  
	|F(z) -e^{c} z | + (1-e^{-c})|w| \leq 
	\frac{Cc|z|}{|z|-1 } + Cc |z| \leq \frac{Cc|z|^2}{|z|-1 } \]
for $c$ small enough. Putting all together, we end up with 
	\[  \bigg| \log  \frac{F(z)}{z} -c \, \frac{z+1}{z-1}\bigg|   
	\leq \frac{Cc^{3/2} |z|}{(|z|-1)^2}  \bigg( 1 + \frac{\sqrt{c} |z|}{|z|-1} \bigg) 
	\leq \frac{2Cc^{3/2} |z|^2}{(|z|-1)^3} \, \]
for $c$ small enough, as claimed. 
\end{proof}

\vspace{3mm}

\section{Pointwise fluctuations} \label{S3}
In this section we prove that, fixed $t\geq 0$ and  $z \in \bbC \setminus \overline{\bbD}$,  the fluctuations of $\log \Phi_\nt (z) $ around its mean are given by a complex Gaussian random variable, whose variance is independent of $\textrm{Arg}(z)$. 
\begin{Notation}
Throughout the paper $\cN (\mu , \s^2 )$ denotes the Gaussian distribution on $\bbR$ with mean $\mu$ and variance $\s^2$.
\end{Notation}
Our main result is the following.

\begin{Theorem}\label{pointwise}
Fix any $t>0$. Pick $a \in [-\pi , \pi )$, $\s >0$, and let $z = e^{ia+\s}$. 
Define $v^2_t(\s) :=  \log \frac{1-e^{-2(\s +t)} }{1-e^{-2\s }} $, and let $\cF^\s (t, e^{ia}) $ be a complex Gaussian random variable with i.i.d.  real and imaginary part,  distributed according to $\cN ( 0 ,  v^2_t(\s )) $. 
Then it holds:
	\[ \frac{1}{\sqrt{c}} \bigg( \log \frac{\Phi_\nt (z)}{z} -\nt c \bigg) \longrightarrow \, \cF ^\s (t,e^{ia}) \]
in distribution as $n \to \infty$, $c \to 0$ and $nc \to 1$. 
\end{Theorem}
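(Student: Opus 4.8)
The plan is to write $\log\frac{\Phi_n(z)}{z}$ as a telescoping sum along the Hastings--Levitov recursion and then apply a martingale central limit theorem to the resulting (backwards) martingale difference array. Recall $\Phi_n = F_1\circ\cdots\circ F_n$, and set $Z_k^{(n)} := \Gamma_k\circ\Phi_n(z) = F_{k+1}\circ\cdots\circ F_n(z)$ for $0\le k\le n$, so that $Z_n^{(n)} = z$ and $Z_0^{(n)} = \Phi_n(z)$. Writing
\[
\log\frac{\Phi_n(z)}{z} = \sum_{k=1}^{n}\Big(\log Z_{k-1}^{(n)} - \log Z_k^{(n)}\Big)
= \sum_{k=1}^{n}\log\frac{F_k(Z_k^{(n)})}{Z_k^{(n)}},
\]
and recalling $F_k(w) = e^{i\Theta_k}F(e^{-i\Theta_k}w)$, each increment equals $\log\frac{F(e^{-i\Theta_k}Z_k^{(n)})}{e^{-i\Theta_k}Z_k^{(n)}}$. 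By Corollary \ref{Fcor}, $\log\frac{F(w)}{w} = c\frac{w+1}{w-1} + O\big(c^{3/2}|w|^2/(|w|-1)^3\big)$, so the natural filtration to use is the backwards one generated by $\Theta_n,\Theta_{n-1},\dots$ (equivalently, reveal particles from the outside in), under which $Z_k^{(n)}$ is measurable with respect to the information after step $k$, and $\Theta_k$ is independent of it. After centering, $X_k := \log\frac{F(e^{-i\Theta_k}Z_k^{(n)})}{e^{-i\Theta_k}Z_k^{(n)}} - \mathbb{E}[\cdots\mid\mathcal{F}_{k}]$ forms a martingale difference array in $k = n, n-1,\dots,1$.

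Next I would identify the deterministic centering and the limiting variance. Using Norris--Turner's result that $\Phi_{\lfloor ns\rfloor}(z)\approx e^{s}z$, one shows $Z_k^{(n)}\approx e^{(n-k)c}z$, so $|Z_k^{(n)}| \approx e^{(n-k)c+\sigma}$. The conditional mean of each increment is, to leading order, $c\,\mathbb{E}\big[\frac{e^{-i\Theta}Z+1}{e^{-i\Theta}Z-1}\big] = c$ (the Poisson-kernel average over the uniform angle of $\frac{w+1}{w-1}$ is $1$ for $|w|>1$), which produces the $nc$ term. For the variance, the conditional second moment of the centered increment is $c^2\big(\mathbb{E}\big|\frac{e^{-i\Theta}Z+1}{e^{-i\Theta}Z-1}\big|^2 - 1\big)$; a direct computation of this Poisson-type integral gives $c^2\cdot\frac{2|Z|^{-2}}{1-|Z|^{-2}}$ for the modulus-squared, and by the complex/conformal structure the real and imaginary parts are uncorrelated with equal variance $c^2\cdot\frac{|Z|^{-2}}{1-|Z|^{-2}}$ each. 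Summing over $k$ with $|Z_k^{(n)}|^2 \approx e^{2(n-k)c+2\sigma}$ and $nc\to 1$, the Riemann sum $\sum_k c\cdot\frac{e^{-2(n-k)c-2\sigma}}{1-e^{-2(n-k)c-2\sigma}}$ converges to $\int_0^1\frac{e^{-2(s+\sigma)}}{1-e^{-2(s+\sigma)}}\,ds = \frac12\log\frac{1-e^{-2\sigma}}{1-e^{-2(1+\sigma)}}$. Scaling by $1/\sqrt{c}$ then yields conditional variance $\to v_t^2(\sigma)$ with $t=1$ for each of the real and imaginary parts; this matches the stated $v_t^2(\sigma) = \log\frac{1-e^{-2(\sigma+t)}}{1-e^{-2\sigma}}$ (note $\log\frac{1-e^{-2(\sigma+t)}}{1-e^{-2\sigma}}$ is negative, and the factor accounting works out correctly once the precise normalization is tracked—I'd carry a careful sign/factor check here). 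To invoke the martingale CLT (e.g.\ the Lindeberg–Feller version for triangular arrays, treating $\mathbb{R}^2$-valued increments), I need: (i) convergence of the sum of conditional covariances to the deterministic matrix $v_1^2(\sigma)\,\mathrm{Id}$ in probability, and (ii) a conditional Lindeberg / negligibility condition, which follows because each increment is $O(c^{3/2})$-perturbed from a bounded-in-$L^2$ term of size $O(c)$, so after dividing by $\sqrt c$ the increments are uniformly $O(\sqrt c)\to 0$.

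The main obstacle is controlling the error between $Z_k^{(n)}$ and its deterministic proxy $e^{(n-k)c}z$ uniformly well enough that (a) the point stays away from the unit circle so Corollary \ref{Fcor} applies (one needs $|Z_k^{(n)}|-1$ bounded below, which is where "$\sigma\to 0$ slowly enough" matters—here $\sigma$ is fixed, so this is the easier regime), and (b) the replacement of $\frac{|Z_k^{(n)}|^{-2}}{1-|Z_k^{(n)}|^{-2}}$ by $\frac{e^{-2(n-k)c-2\sigma}}{1-e^{-2(n-k)c-2\sigma}}$ in the variance sum incurs a total error that vanishes. For (b) I would use the Norris--Turner a priori estimate on $\sup_{s\le 1}|\Phi_{\lfloor ns\rfloor}(z) - e^s z|$ (or rather its "mapping out" analogue for $Z_k^{(n)}$), combined with Lipschitz continuity of $w\mapsto\frac{|w|^{-2}}{1-|w|^{-2}}$ away from $|w|=1$, to bound the discrepancy by $\sup_k|Z_k^{(n)} - e^{(n-k)c}z|$ times a constant, then show this sup is $o(1)$ in probability. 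The $O(c^{3/2})$ error terms from Corollary \ref{Fcor} contribute $\sum_k O(c^{3/2}) = O(n c^{3/2}) = O(\sqrt c)\to 0$ after normalization, so they are harmless. Assembling these pieces—telescoping identity, backwards-martingale structure, Poisson-kernel variance computation, a priori control of the trajectory, and the martingale CLT—gives the claimed convergence in distribution.
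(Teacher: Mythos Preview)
Your approach is essentially identical to the paper's: the same telescoping along the backwards composition, the same backwards martingale difference array with respect to $\mathscr{F}_{k,n}=\sigma(\Theta_k,\ldots,\Theta_n)$, the same replacement of $Z_k^{(n)}$ by its deterministic proxy $e^{(n-k)c}z$ via the Norris--Turner good event (Theorem~\ref{teoJ}), and the same Poisson-kernel computation of the limiting variance followed by a martingale CLT (the paper uses McLeish's scalar version plus Cram\'er--Wold, together with the short holomorphic argument of Proposition~\ref{simpler} to get the i.i.d.\ structure of real and imaginary parts that you attribute to ``the complex/conformal structure''). Your flagged sign/factor worry resolves cleanly: $v_t^2(\sigma)=\log\frac{1-e^{-2(\sigma+t)}}{1-e^{-2\sigma}}$ is \emph{positive}, the per-component conditional variance is $c^2\cdot\frac{2|Z|^{-2}}{1-|Z|^{-2}}$ rather than $c^2\cdot\frac{|Z|^{-2}}{1-|Z|^{-2}}$, and the Riemann sum $\sum_k c\cdot\frac{2e^{-2((\lfloor nt\rfloor-k)c+\sigma)}}{1-e^{-2((\lfloor nt\rfloor-k)c+\sigma)}}\to\int_0^t\frac{2e^{-2(s+\sigma)}}{1-e^{-2(s+\sigma)}}\,ds=v_t^2(\sigma)$ on the nose.
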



We prove Theorem \ref{pointwise} for $\s \leq 1$, which we assume without further notice. This entails no loss of generality (see discussion in Section \ref{Shol}), and it has the advantage of slightly simplifying the notation. 

\begin{Remark}
From this point onwards, with the exception of Appendix \ref{Aloc}, whenever we write $n\to\infty$ or $c\to 0$ we mean that $n\to\infty , c\to 0$ and $nc \to 1$. 
\end{Remark}

Our main tool for the proof of Theorem \ref{pointwise} consists of two sequences of backwards martingale difference arrays, that we now define. Note that $\big| \frac{\Phi_\nt (z)}{z} \big|>1$ for all $z \in D_0$, so $\log \frac{\Phi_\nt (z)}{z}$ defines a holomorphic function on $D_0$. We fix the branch of the logarithm by requiring that $\log \frac{\Phi_\nt (z)}{z} \to c$ as $z \to \infty$. 
For $\s >0$ and $a \in [-\pi , \pi )$ as in Theorem \ref{pointwise}, then, we find:
\[ \log \frac{\Phi_\nt (e^{ia+\s})}{e^{ia+\s}} = 
\sum_{k=1}^\nt \log \frac{ F_k \circ F_{k+1} \circ \cdots \circ F_\nt (e^{ia+\s})}{ F_{k+1} \circ \cdots \circ F_\nt (e^{ia+\s})} 
=
\sum_{k=1}^\nt \log \frac{ F(e^{-i\Theta_k} Z_{k,\nt }^\s (a) )}{e^{-i\Theta_k} Z_{k,\nt}^\s (a) } \, , \]
where $Z_{k,\nt}^\s (a)  := F_{k+1} \circ \cdots \circ F_\nt (e^{ia+\s})$. Moreover, if $\mathscr{F}_{k,n} := \sigma ( \Theta_k , \Theta_{k+1} \ldots \Theta_n )$, then 
	\[ \bbE \bigg( \log \frac{ F(e^{-i\Theta_k} Z_{k,\nt}^\s (a) )}{e^{-i\Theta_k} Z_{k,\nt}^\s (a) } 
	\bigg| \mathscr{F}_{k+1,\nt} \bigg) = \frac{1}{2\pi} \int_{-\pi}^\pi 
	\log \frac{ F(e^{-i\th} Z_{k,\nt}^\s (a) )}{e^{-i\th} Z_{k,\nt}^\s (a) } \dd \th 
	= \lim_{|z| \to \infty } \log \frac{F(z)}{z} = c \, .\]
We therefore set 
\begin{equation}\label{XY}
X_{k,\nt}^\s (a) \! = \frac{1}{\sqrt{c}}  \bigg( \! \log \bigg|\frac{ F(e^{-i\Theta_k} Z_{k,\nt}^\s (a) )}{e^{-i\Theta_k} Z_{k,\nt}^\s (a) }\bigg|  -c \bigg) , 
\quad \; \,
Y_{k,\nt}^\s (a) \! = \frac{1}{\sqrt{c}} \textrm{Arg}  \bigg( \frac{ F(e^{-i\Theta_k} Z_{k,\nt}^\s (a) )}{e^{-i\Theta_k} Z_{k,\nt}^\s (a) } \bigg) 
\end{equation}
where $\textrm{Arg}(z) \in [-\pi , \pi )$. The above computation then shows that  $(X_{k,\nt}^\s (a) )_{ k \leq \nt}$ and $ (Y_{k,\nt}^\s (a) )_{ k \leq \nt}$ form sequences of backwards martingale arrays with respect to the same filtration $(\mathscr{F}_{k,n})_{k \leq \nt}$ (see \cite{billingsley1995probability}, Section 35 for the definition of martingale arrays).
Moreover, 
	\[ \sum_{k=1}^\nt \big(  X_{k,\nt}^\s (a)  +i  \, Y_{k,\nt}^\s (a) \big) = \frac{1}{\sqrt{c}} \bigg( \log \frac{\Phi_\nt (e^{ia+\s})}{e^{ia+\s}} -\nt c \bigg) \, . \]
\begin{Remark}
Throughout the paper we identify $\bbC$ with $\bbR^2$, and often refer to complex random variables as random vectors and vice versa, depending on which point of view we seek to emphasize.
\end{Remark}

The following result appears in  \cite{mcleish1974dependent}, Corollary 2.8, as a Central Limit Theorem for (forward) martingale difference arrays (see also \cite{billingsley1995probability}, Theorem 35.12). It is straightforward to adapt the proof to backwards martingale difference arrays, so that we have the following. 

\begin{Theorem} \label{ML_CLT}
Let $(\mathcal{X}_{k,n})_{1\leq k \leq n}$  be a backwards martingale difference array with respect to $\mathscr{F}_{k,n} = \s ( \mathcal{X}_{k,n} \ldots \mathcal{X}_{n,n} )$.
Let $S_{k,n} = \sum_{j=k }^n \mathcal{X}_{j,n}$. Assume that:
\begin{itemize}
\item[(I)] for all $\eta >0$,  $\ds \sum_{k=1}^n \mathcal{X}_{k,n}^2 \mathds{1}(|\mathcal{X}_{k,n}|>\eta ) \to 0$ in probability as $n\to\infty$,
\item[(II)] $\ds \sum_{k=1}^n \mathcal{X}_{k,n}^2 \to s^2$ in probability as $n\to\infty$, for some $s^2 >0$.
\end{itemize}
Then $S_{n,n} $ converges in distribution to $\cN (0,s^2 )$.  
\end{Theorem}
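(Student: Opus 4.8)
The plan is to derive Theorem~\ref{ML_CLT} from the classical \emph{forward} martingale central limit theorem of \cite{mcleish1974dependent} (Corollary~2.8) and \cite{billingsley1995probability} (Theorem~35.12) by reversing the row index $k\mapsto n+1-k$, and then to observe that this reversal is invisible to McLeish's argument. Fix $\lambda\in\bbR$ and write $S_n:=\sum_{k=1}^{n}\mathcal{X}_{k,n}$; by L\'evy's continuity theorem it suffices to prove $\bbE\bigl[e^{i\lambda S_n}\bigr]\to e^{-\lambda^2 s^2/2}$ as $n\to\infty$.

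For the reversal, put $\mathcal{Y}_{j,n}:=\mathcal{X}_{n+1-j,\,n}$ and $\mathcal{G}_{j,n}:=\mathscr{F}_{n+1-j,\,n}$ for $1\le j\le n$. Since $\mathscr{F}_{k,n}=\s(\mathcal{X}_{k,n},\dots,\mathcal{X}_{n,n})$ is \emph{decreasing} in $k$, the family $(\mathcal{G}_{j,n})_{j}$ is an increasing filtration with $\mathcal{G}_{j,n}=\s(\mathcal{Y}_{1,n},\dots,\mathcal{Y}_{j,n})$, the variable $\mathcal{Y}_{j,n}$ is $\mathcal{G}_{j,n}$-measurable, and the backwards identity $\bbE[\mathcal{X}_{k,n}\mid\mathscr{F}_{k+1,n}]=0$ becomes $\bbE[\mathcal{Y}_{j,n}\mid\mathcal{G}_{j-1,n}]=0$; thus $(\mathcal{Y}_{j,n})_{1\le j\le n}$ is an ordinary forward martingale difference array. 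Moreover $\sum_{j}\mathcal{Y}_{j,n}=S_n$, $\sum_{j}\mathcal{Y}_{j,n}^2=\sum_{k}\mathcal{X}_{k,n}^2$, and $\sum_{j}\mathcal{Y}_{j,n}^2\mathds{1}(|\mathcal{Y}_{j,n}|>\eta)=\sum_{k}\mathcal{X}_{k,n}^2\mathds{1}(|\mathcal{X}_{k,n}|>\eta)$ as random variables, so hypotheses (I) and (II) carry over verbatim to $(\mathcal{Y}_{j,n})$. The forward martingale CLT then yields $S_n\to\cN(0,s^2)$, which is the assertion.

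For completeness, here is the structure of the forward proof that is being adapted. One introduces the auxiliary complex product $U_n:=\prod_{j=1}^{n}(1+i\lambda\,\mathcal{Y}_{j,n})$ and uses the elementary identity $e^{ix}=(1+ix)\exp\bigl(-\tfrac{x^2}{2}+\rho(x)\bigr)$ with $|\rho(x)|\le|x|^3$ for $|x|\le\tfrac12$ to factor, on the event $\{\max_j|\lambda\mathcal{Y}_{j,n}|\le\tfrac12\}$,
\[ e^{i\lambda S_n}=U_n\,\exp\Bigl(-\tfrac{\lambda^2}{2}\sum_j\mathcal{Y}_{j,n}^2\Bigr)\,\exp\Bigl(\sum_j\rho(\lambda\mathcal{Y}_{j,n})\Bigr). \]
Condition (I) forces $\max_j|\mathcal{Y}_{j,n}|\to0$ in probability, so this event has probability $\to1$ and the error term $\sum_j\rho(\lambda\mathcal{Y}_{j,n})\to0$ in probability; with (II) the middle factor $\to e^{-\lambda^2 s^2/2}$ in probability, whence $e^{i\lambda S_n}-e^{-\lambda^2 s^2/2}U_n\to0$ in probability. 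The martingale structure is used \emph{only} to obtain $\bbE[U_n]=1$: conditioning successively on $\mathcal{G}_{n-1,n},\mathcal{G}_{n-2,n},\dots$ and using $\bbE[\mathcal{Y}_{j,n}\mid\mathcal{G}_{j-1,n}]=0$ peels off the factors one at a time. Finally, $\{U_n\}_n$ is uniformly integrable, so (since $|e^{i\lambda S_n}|\le1$) one may take expectations in the displayed approximation to get $\bbE[e^{i\lambda S_n}]=e^{-\lambda^2 s^2/2}\bbE[U_n]+o(1)\to e^{-\lambda^2 s^2/2}$, and L\'evy's theorem concludes.

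The one genuinely delicate ingredient — present already in the forward case — is the uniform integrability of the products $\{U_n(\lambda)\}$: the only available bound is $|U_n|^2=\prod_j(1+\lambda^2\mathcal{Y}_{j,n}^2)\le\exp\bigl(\lambda^2\sum_j\mathcal{Y}_{j,n}^2\bigr)$, which is useless until one first truncates so that $\sum_j\mathcal{Y}_{j,n}^2$ is controlled by a constant, e.g. by replacing $\mathcal{Y}_{j,n}$ with $\mathcal{Y}_{j,n}\mathds{1}\bigl(\sum_{i<j}\mathcal{Y}_{i,n}^2\le s^2+1\bigr)$ — a cut-off that must be $\mathcal{G}_{j-1,n}$-measurable so as not to destroy the martingale-difference property, and which changes the array with vanishing probability by (II). This truncation device is carried out in \cite{mcleish1974dependent}, and I would invoke it rather than reprove it; the point for us is that it, like every other step above, refers to the array only through its row sums of squares, its Lindeberg tails, and one-step conditional expectations — all invariant under the index reversal — so that the passage from backwards to forward arrays is indeed immediate, exactly as the text asserts.
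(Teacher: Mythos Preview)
Your proposal is correct and matches the paper's own treatment: the paper does not give a proof at all, but simply cites \cite{mcleish1974dependent}, Corollary~2.8 and \cite{billingsley1995probability}, Theorem~35.12 for the forward case and asserts that ``it is straightforward to adapt the proof to backwards martingale difference arrays.'' Your index reversal $j\mapsto n+1-j$ makes this adaptation explicit, and the optional sketch of McLeish's argument (including the predictable truncation needed for uniform integrability of $U_n$) goes well beyond what the paper supplies.
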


Note that the above theorem is concerned with scalar random variables, while we have 
$2$--dimensional vectors. In order to reduce to the scalar case, recall that by the Cram{\'e}r--Wold Theorem (cf. \cite{durrett2010probability}, Theorem 3.9.5) it suffices to prove convergence in distribution of all linear combinations of the vector entries. To this end, pick any $\a , \beta \in \bbR$, and note that by linearity $\big( \a X_{k,\nt}^\s (a)  + \b Y_{k,\nt}^\s (a) \big)_{ k \leq \nt}$ is again a backwards martingale difference array with respect to the filtration $(\mathscr{F}_{k,\nt})_{ k \leq \nt}$. 
We are going to apply Theorem \ref{ML_CLT}  to  this linear combination.  To this end, we collect here some estimates for $(X^\s_{k,\nt}(a))$ and $(Y^\s_{k,\nt}(a))$. Since $a$ and $\s$ are fixed, we omit them from the notation throughout this section.

\begin{Lemma} \label{badE}
There exists a constant $C>0$ such that, for $c$ small enough, it holds $|X_{k,\nt}| \leq C/\sqrt{c}$, $|Y_{k,\nt}| \leq C/\sqrt{c}$ for all $n\geq 1$, $k\leq \nt$.
\end{Lemma}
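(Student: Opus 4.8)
The plan is to prove the two bounds separately; both turn out to be soft consequences of the elementary distortion properties (P1)--(P2) of the map $F$ together with the convention on $\textrm{Arg}$, and require no probabilistic input whatsoever.

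First I would dispose of the bound on $Y_{k,\nt}$, which is immediate. By definition $Y_{k,\nt}^\s(a) = \frac{1}{\sqrt c}\,\textrm{Arg}\big(F(w)/w\big)$, where $w = e^{-i\Theta_k} Z_{k,\nt}^\s(a)$, and since $\textrm{Arg}$ takes values in $[-\pi,\pi)$ by convention we get $|Y_{k,\nt}^\s(a)| \leq \pi/\sqrt c$ for every $n$, every $k\leq\nt$, and every $a,\s$.

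For $X_{k,\nt}$ the only step that deserves a word is the observation that $w$ lies in $D_0$, so that (P1)--(P2) may be applied to it: indeed $e^{ia+\s}\in D_0$ since $\s>0$, each $F_j$ maps $D_0$ into $D\subseteq D_0$ (because $K_0=\overline{\bbD}\subseteq K$), so $Z_{k,\nt}^\s(a)\in D_0$, and multiplication by $e^{-i\Theta_k}$ preserves $D_0$ since $K_0$ is rotation invariant. Granting this, (P1) gives $\log\big|F(w)/w\big|>0$ while (P2) gives $\log\big|F(w)/w\big|\leq \log C$ with $C$ depending only on $F$; hence $\big|\log|F(w)/w|-c\big|\leq \log C + c \leq \log C + 1$ once $c\leq 1$, and dividing by $\sqrt c$ yields $|X_{k,\nt}^\s(a)|\leq (\log C+1)/\sqrt c$. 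Taking the maximum of the two constants gives the lemma.

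I do not expect any genuine obstacle here: the lemma is deliberately crude, and the one place to be slightly careful is checking $w\in D_0$, which rests on the nesting $D\subseteq D_0$ and the rotation invariance of the unit disc. Its role is simply to record the \emph{a priori} bound $|\mathcal{X}_{k,\nt}| = O(c^{-1/2})$ on the martingale increments, which will be fed into the Lindeberg-type condition (I) of Theorem \ref{ML_CLT}; the real cancellations are postponed to the control of the quadratic variation $\sum_k \mathcal{X}_{k,\nt}^2$.
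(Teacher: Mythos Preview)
Your proof is correct and follows essentially the same approach as the paper: use (P1)--(P2) to bound $\log|F(w)/w|$ between $0$ and $\log C$ for the $X$-bound, and the range convention on $\textrm{Arg}$ for the $Y$-bound. Your explicit verification that $w\in D_0$ (via $D\subseteq D_0$ and rotation invariance of $K_0$) is a nice addition that the paper leaves implicit.
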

\begin{proof}
It follows from (P1) that $\log \big| \frac{F(e^{-i\Theta_k} Z_{k,\nt})}{e^{-i\Theta_k} Z_{k,\nt}} \big| >0$, from which $|X_{k,\nt}| >-\sqrt{c}$. Moreover, (P2) gives $ |X_{k,\nt}| \leq C/\sqrt{c} + \sqrt{c} \leq 2C/\sqrt{c}$ for some constant $C>0$ and $c$ small enough. Finally, since $\textrm{Arg} \big( \frac{F(e^{-i\Theta_k} Z_{k,\nt})}{e^{-i\Theta_k} Z_{k,\nt}} \big) \in [-\pi , \pi)$, we have $|Y_{k,\nt}| \leq \pi / \sqrt{c} $. 
\end{proof}

Much better estimates can be obtained by restricting to a certain \emph{good event}, which is shown in  \cite{norris2012hastings} to have high probability for large $n$. The following result identifies this event. 

\begin{Theorem}[\cite{norris2012hastings}, Proposition 5.1] \label{teoJ}
Fix a positive integer $m$ and a constant $\e >0$. For $n\leq m$ define the events 
	\[ \begin{split}
	E_n (\e )   : & = \{ | e^{-cn} \Phi_n (z) - z | < \e e^{6\e} \mbox{ for all } z : |z| \geq e^{5\e} \}
	\\ & \cap 
	\{ | e^{cn} \Gamma_n (z) - z | < \e e^{5\e +cn} \mbox{ for all } z : |z| \geq e^{cn+4\e} \} \, ,
	\end{split} \]
and set $E (m,\e ):= \bigcap_{n=1}^m E_n (\e )$. Then it holds
	\[ \bbP (E (m,\e )^c ) \leq C (m+\e^{-2}) e^{-\e^3 / (Cc)} \]
for some constant $C>0$. In particular, by setting $m = \lfloor \d^{-6} \rfloor$ and $\e = \d^{2/3} \log (1/\d )$, one obtains that $ \d^{-k} \bbP (E (m,\e )^c ) \to 0  $
as $\d \to 0$, for any $k\geq 0$. 
\end{Theorem}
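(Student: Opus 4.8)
The plan is to reduce the statement to a tail bound for a single $n\le m$ and a single point on a fixed circle, prove that by a martingale concentration inequality, recover the uniform‑in‑$z$ events $E_n(\e)$ by soft complex analysis, and finish with a union bound over $n=1,\dots,m$. The ``in particular'' is then immediate from \eqref{CD}: with $m=\lfloor\d^{-6}\rfloor$ and $\e=\d^{2/3}\log(1/\d)$ one has $c\asymp\d^2$, hence $\e^3/(Cc)\asymp(\log(1/\d))^3$, which dominates $k\log(1/\d)$ for every fixed $k$, so $\d^{-k}\bbP(E(m,\e)^c)\to0$.

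Fix $n\le m$ and $z$ with $|z|=e^{5\e}$, and set $M_n(z):=\log\frac{\Phi_n(z)}{z}-nc$, with the branch normalised by $M_n(z)\to0$ as $z\to\infty$. Reversing the order in the backward martingale difference array \eqref{XY} (with $\nt$ replaced by $n$ and $e^{ia+\s}=z$) exhibits $M_n(z)$ as the terminal value $\hat M_n$ of a complex forward martingale $(\hat M_j)_{0\le j\le n}$ whose $(j{+}1)$‑st increment is $\log\frac{F(e^{-i\Theta_{n-j}}Z_{n-j,n})}{e^{-i\Theta_{n-j}}Z_{n-j,n}}-c$, where $Z_{k,n}=F_{k+1}\circ\cdots\circ F_n(z)$. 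By \eqref{tNT}, transported from $G$ to $F$ through $G=F^{-1}$ and (P1), one has $\big|\log\frac{F(w)}{w}-c\big|\le \frac{Cc}{|w|-1}$ (a bound depending on $w$ only through $|w|$), so the $(j{+}1)$‑st increment is bounded by $\frac{Cc}{|Z_{n-j,n}|-1}$ in modulus with conditional second moment at most $\frac{C^2c^2}{(|Z_{n-j,n}|-1)^2}$. Now (P1) gives $|Z_{k,n}|\ge|z|=e^{5\e}$ \emph{surely}, and more is true: the exact identity $\log|Z_{n-j,n}|=5\e+jc+\Re\hat M_j$ holds, so as long as $\hat M$ stays small, $|Z_{n-j,n}|$ grows geometrically in $j$. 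Summing the conditional second moments over the $\asymp 1/c$ steps therefore yields $O(c/\e)$ rather than the crude $O(c/\e^2)$, and the increments stay $O(c/\e)$ as well.

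The delicate point -- the reason one cannot simply apply a concentration inequality to $\hat M_n$ -- is that the second‑moment bound $O(c/\e)$ relies on the growth of $|Z_{\cdot,n}|$, which is part of the event one is trying to establish. I would resolve this by a bootstrap: let $\hat\tau:=\inf\{j:|\hat M_j|>\e/2\}$; on $\{j<\hat\tau\}$ the identity above forces $|Z_{n-j,n}|\ge e^{jc+4\e}$, so the stopped martingale $\hat M^{\hat\tau}$ has increments bounded by $\frac{Cc}{e^{4\e}-1}=O(c/\e)$ and predictable quadratic variation at most $\sum_{j\ge0}\frac{C^2c^2}{(e^{jc+4\e}-1)^2}=O(c/\e)$, both surely. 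Freedman's inequality (applied to real and imaginary parts) then gives $\bbP\big(\sup_{j\le n}|\hat M^{\hat\tau}_j|\ge\e/2\big)\le C\,e^{-\e^3/(Cc)}$, the cubic power being exactly the trade‑off between the $O(c/\e)$ increment bound and the $O(c/\e)$ quadratic‑variation bound; on the complementary event one checks that $\hat\tau>n$, hence $|M_n(z)|=|\hat M_n|<\e$.

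To make this uniform in $z$ and to treat $\Gamma_n$: since $M_n$ is holomorphic on $\{|z|>1\}$ with $M_n(\infty)=0$, a crude deterministic bound on $|M_n|$ on a fixed annulus around $\{|z|=e^{5\e}\}$, Cauchy's estimate, and a net of polynomially‑many (in $m,1/\e$) points reduce $\sup_{|z|=e^{5\e}}|M_n(z)|<\e$ to the one‑point bound; Schwarz's lemma then upgrades this to $|M_n(z)|\le\e\,e^{5\e}/|z|$ throughout $\{|z|\ge e^{5\e}\}$, and writing $e^{-nc}\Phi_n(z)-z=z(e^{M_n(z)}-1)$ and using $|e^w-1|\le|w|e^{|w|}$ gives the first half of $E_n(\e)$. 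The second half follows by the same argument applied to the Markov chain $\eta_0=w$, $\eta_j=G_j(\eta_{j-1})$ (so $\eta_n=\Gamma_n(w)$ and $\log(\eta_n/w)+nc$ is a martingale by the mean value property of $\log(G(\zeta)/\zeta)$), where the bootstrap additionally guarantees that $\Gamma_n$ is well defined at $w$ and that $|\eta_j|$ stays away from the unit circle. A union bound over $n=1,\dots,m$ then gives $\bbP(E(m,\e)^c)\le C(m+\e^{-2})e^{-\e^3/(Cc)}$ after the bookkeeping of \cite{norris2012hastings}; a cruder prefactor, polynomial in $m$ and $1/\e$, is immediate and already suffices for the final assertion. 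The main work -- and the only genuinely subtle step -- is the bootstrap of the third paragraph, together with keeping the uniformity and the dependence on $m$ and $\e$ under control; everything else is the concentration estimate and routine complex analysis (Cauchy's estimate and Schwarz's lemma).
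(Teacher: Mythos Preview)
The paper does not prove this theorem; it is quoted (with attribution) from \cite{norris2012hastings}, Proposition~5.1, and used as a black box throughout. So there is no ``paper's own proof'' to compare against here.

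That said, your sketch is essentially the argument of \cite{norris2012hastings}: a martingale representation of $\log(\Phi_n(z)/z)-nc$ (respectively $\log(\Gamma_n(w)/w)+nc$), a bootstrap via stopping to keep the predictable quadratic variation at $O(c/\e)$, a Freedman--type exponential inequality yielding the $e^{-\e^3/(Cc)}$ tail, and complex--analytic devices (Cauchy estimates on a net, Schwarz's lemma) to pass from pointwise to uniform control. The identity $\log|Z_{n-j,n}|=5\e+jc+\Re\hat M_j$ is correct and is precisely what makes the bootstrap close. The ``in particular'' is indeed immediate from \eqref{CD}.

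Two small remarks. First, the net step is slightly thin: to apply Cauchy's estimate to $M_n'$ you need a deterministic a~priori bound on $|M_n|$ on an annulus, and the one coming from $|Z_{k,n}|\ge e^{5\e}$ surely is $|M_n(z)|\le Cnc/\e$; after Cauchy this forces a net of size polynomial in $m$ and $1/\e$, giving a polynomial prefactor rather than the stated $(m+\e^{-2})$. You acknowledge this, and it is harmless for the final assertion. Second, for $\Gamma_n$ the chain $\eta_j=G_j(\eta_{j-1})$ is only defined while $\eta_{j-1}$ avoids the rotated particle, so the stopping time has to be built into the definition of the process itself, not just into the martingale; you allude to this, but it is the one place where the $\Gamma_n$ argument is genuinely more delicate than the $\Phi_n$ one. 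Neither point is a real gap.
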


We refer to $E(m,\e)$ as the \emph{good event}. 
\begin{Remark}
Without further notice, we take $m = \lfloor \d^{-6} \rfloor$ and $\e = \d^{2/3} \log (1/\d )$ as in the last part of the above theorem, so that $\e \gg \d$ and $n \asymp \d^{-2} \ll m$.
\end{Remark}

\begin{Lemma} \label{LEgoodE}
Assume that $\s \gg \d$. 
Then there exists a constant $C(t)>0$ such that, for $n$ large enough, on the good event $E(m,\e )$ it holds 
	\begin{equation} \label{goodE}
	\max_{k\leq \nt} \Big\{ |X_{k,\nt}| \vee |Y_{k,\nt}| \Big\} \leq C(t) \frac{\sqrt{c}}{\s} \, . 
	\end{equation}
\end{Lemma}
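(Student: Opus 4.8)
### Proof plan for Lemma \ref{LEgoodE}

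\textbf{Strategy.} The plan is to bound $|X_{k,\nt}|$ and $|Y_{k,\nt}|$ simultaneously by controlling the complex quantity $\log \frac{F(e^{-i\Theta_k}Z_{k,\nt})}{e^{-i\Theta_k}Z_{k,\nt}}$ through Proposition \ref{teoNT} (the estimate $|\log(G(z)/z)+c|\leq Cc/|z-1|$, equivalently the $F$-version in Corollary \ref{Fcor}). Since $\sqrt{c}\,(X_{k,\nt}+iY_{k,\nt}) = \log\frac{F(w)}{w}-c$ with $w = e^{-i\Theta_k}Z_{k,\nt}^\s(a)$, it suffices to show that on the good event $E(m,\e)$ one has $|w-1|\geq c'\s$ uniformly in $k\leq\nt$ for some constant $c'=c'(t)>0$; then $\big|\log\frac{F(w)}{w}-c\big|\leq Cc/|w-1|\leq C(t)c/\s$, and dividing by $\sqrt c$ gives the claimed $C(t)\sqrt c/\s$. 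One must also check that $|w-1|>2\d$ so that Corollary \ref{Fcor} (or Proposition \ref{teoNT}) actually applies — but this is immediate once $|w-1|\gtrsim\s\gg\d$.

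\textbf{Key steps.} First, I would recall that $Z_{k,\nt}^\s(a) = F_{k+1}\circ\cdots\circ F_\nt(e^{ia+\s}) = \Phi_{k+1,\nt}(e^{ia+\s})$ where $\Phi_{k+1,\nt}$ is a composition of $\nt-k$ i.i.d. particle maps; by the shift structure this has the same law as $\Phi_{\nt-k}(e^{ia+\s})$ evaluated at the appropriate rotated point, and in any case $\log|Z_{k,\nt}^\s(a)|$ is built from capacity-$c$ increments so that $|Z_{k,\nt}^\s(a)|\geq e^\s$ by (P1). The real content is a lower bound of the form $|Z_{k,\nt}^\s(a)|\geq e^{\s + c(\nt-k) - \text{error}}$, i.e. the modulus grows at least like $e^{c(\nt-k)}$ times $e^\s$ up to the small multiplicative error supplied by $E(m,\e)$. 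Concretely, on $E(m,\e)$ we have $|e^{-c(\nt-k)}\Phi_{\nt-k}(z)-z|<\e e^{6\e}$ for $|z|\geq e^{5\e}$; applying this with $z$ the relevant evaluation point (which has modulus $e^\s$, and $\s\gg\d\gg\e$ fails — so here I need $\s\geq 5\e$, which does hold since $\e=\d^{2/3}\log(1/\d)$ and we may assume $\s\gg\d^{2/3}\log(1/\d)$; this is the precise meaning of "$\s\gg\d$" that should be invoked, or else one uses the second inclusion in $E_n(\e)$ to push the point out first). This yields $|Z_{k,\nt}^\s(a)|\geq e^{c(\nt-k)}(e^\s - \e e^{6\e})\geq e^{c(\nt-k)}e^\s/2$ for $n$ large. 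Hence $|Z_{k,\nt}^\s(a)|\geq e^\s/2 \geq 1 + \s/4$, and since rotation by $e^{-i\Theta_k}$ preserves modulus, $|w| = |Z_{k,\nt}^\s(a)|\geq 1+\s/4$, which forces $|w-1|\geq |w|-1\geq \s/4$. Combined with $nc\to 1$ so that $c(\nt-k)\leq t + o(1)$ is bounded, all the error terms stay uniformly controlled over $k\leq\nt$, giving the constant $C(t)$.

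\textbf{Main obstacle.} The delicate point is matching the hypothesis "$\s\gg\d$" with the threshold $e^{5\e}$ appearing in the definition of $E_n(\e)$, since $\e = \d^{2/3}\log(1/\d)$ is much larger than $\d$: a point at distance $\s$ from the unit circle with $\d\ll\s\ll\e$ is \emph{not} covered by the first inclusion $\{|z|\geq e^{5\e}\}$. I expect the resolution is to first observe that $|Z_{k,\nt}^\s(a)|$ only increases along the composition (by (P1)), so after a bounded number of steps — or using that $e^{c(\nt-k)}$ already exceeds $e^{5\e}$ once $\nt - k \gtrsim \e/c \asymp \e\d^{-2}$ which is $\ll\nt\asymp\d^{-2}$ — the evaluation point does land in the region $\{|z|\geq e^{5\e}\}$ where $E(m,\e)$ gives control; the finitely many initial steps are handled by the crude Lemma \ref{badE} bound, which is harmless because the relevant statement is about the final sum, not a worst-case single increment. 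Alternatively, and more cleanly, one applies the \emph{second} inclusion in $E_n(\e)$ (the one controlling $\Gamma_n$) to conclude $|Z_{k,\nt}^\s(a)|\geq e^{5\e}$ directly whenever $\s\geq 4\e$, which is the honest reading of "$\s\gg\d$" in force here. Either way, once the point is in the good region the estimate is a one-line application of Proposition \ref{teoNT}.
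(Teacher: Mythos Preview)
Your core reduction is correct: writing $w=e^{-i\Theta_k}Z_{k,\nt}$, one has $\sqrt{c}\,|X_{k,\nt}+iY_{k,\nt}|=\big|\log(F(w)/w)-c\big|$, and substituting $z=F(w)$ into the first estimate of Proposition~\ref{teoNT} gives
\[
\Big|\log\frac{F(w)}{w}-c\Big|\leq \frac{Cc}{|F(w)-1|}\leq \frac{Cc}{|w|-1},
\]
the last step by (P1). (Note the bound is in terms of $|w|-1$, not $|w-1|$ as you wrote; this is harmless here since you only use $|w|-1\gtrsim\s$ anyway.) So the lemma boils down to $|w|-1\gtrsim\s$.

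But you already have this, and without any appeal to $E(m,\e)$: as you yourself note, iterating (P1) gives $|Z_{k,\nt}|>e^\s$, hence $|w|-1>e^\s-1\geq\s$, and the hypothesis $|F(w)-1|>2\d$ of Proposition~\ref{teoNT} then follows from $\s\gg\d$. Your entire ``Main obstacle'' paragraph is therefore about a non-issue: you never need the inclusion $\{|z|\geq e^{5\e}\}$ from the definition of $E(m,\e)$, nor the stronger lower bound $|Z_{k,\nt}|\geq e^{\s+(\nt-k)c-\text{error}}$, nor any splitting into ``initial steps'' handled by Lemma~\ref{badE}. In your route the good event plays no role at all, and the resulting constant is even absolute rather than $t$-dependent.

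For comparison, the paper's proof takes a slightly longer path: it bounds $|\log F(w)-\log w|$ via the mean value theorem and the estimate $|F(z)-z|\leq 2Cc|z|^2/(|z|-1)$ coming from Corollary~\ref{Fcor}. This introduces a factor $|Z_{k,\nt}|^2$ in the numerator, and the good event $E(m,\e)$ is then invoked precisely to supply the \emph{upper} bound $|Z_{k,\nt}|\leq C(t)$ (whence the $t$-dependence of the constant). Your direct substitution into Proposition~\ref{teoNT} sidesteps this and is cleaner --- once you drop the unnecessary detour through $E(m,\e)$.
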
 
\begin{proof}
For any $k\leq \nt$ we have
	\[\begin{split} 
	 \max \big\{ |X_{k,\nt}| ; |Y_{k,\nt}| \big\} & \leq 
	\frac{1}{\sqrt{c} } \bigg(  \big| \log F(e^{-i\Theta_k} Z_{k,\nt}  ) - 
	\log (e^{-i\Theta_k} Z_{k,\nt})  \big| +c \bigg) \\ & 
	\leq \frac{1}{\sqrt{c}} \bigg[ \Big( \sup_{|\xi|\geq e^\s} \frac{1}{|\xi |}\Big) 
	\cdot \big| F(e^{-i\Theta_k} Z_{k,\nt} ) - e^{-i\Theta_k} Z_{k,\nt}\big| + c \bigg] \, , 
	\end{split}\]
where the last inequality follows from the mean values theorem, and the fact that  $| F(e^{-i\Theta_k} Z_{k,\nt}) | > |e^{-i\Theta_k} Z_{k,\nt}| > e^\s$ almost surely by (P1). 
Now note that $|F(e^{-i\Theta_k} Z_{k,\nt}) -1| > |Z_{k,\nt}|-1 \geq \s \gg 2\d $, so by Corollary \ref{Fcor} we have 
	\begin{equation} \label{Fz}
	 | F(z) - z | \leq |F(z) - e^c z | + (e^c -1) |z| \leq \frac{Cc|z|}{|z|-1} + 2c |z| \leq 
	\frac{2Cc|z|^2}{|z|-1} \, 
	\end{equation}
for $z=e^{-i\Theta_k} Z_{k,\nt}$.
Moreover, since we are on $E(m,\e )$, there exists a constant $C(t)$ depending only on $t$ such that $e^\s <|Z_{k,\nt}| \leq C(t)$. This, together with \eqref{Fz}, yields 
	\[ \begin{split} 
	 \max_{k\leq \nt} \Big\{ |X_{k,\nt}| \vee |Y_{k,\nt}| \Big\} & \leq \frac{1}{\sqrt{c} }\bigg(  \frac{2Cc | Z_{k,\nt}|^2 }{|Z_{k,\nt}| -1} + c \bigg) \leq \frac{C(t) \sqrt{c}}{e^\s -1} + \sqrt{c} 
	 \leq 2 C(t) \cdot \frac{\sqrt{c}}{\s} \,  
	\end{split}\]
as claimed.
\end{proof}

We now  make use of the above bounds to prove that the backwards martingale difference array  $(\cX_{k,\nt} )_{k\leq \nt } $, with $\cX_{k,\nt} :=  \a X_{k,\nt}+ \b Y_{k,\nt}$, satisfies Assumptions (I)-(II) of Theorem \ref{ML_CLT}. In doing so, we provide an explicit formula for the limiting variance. 

\begin{Lemma} \label{le3}
Assume that $\s \gg \d$. Then
for all $\eta >0$ it holds
$\ds  \sum_{k=1}^\nt  \cX_{k,\nt}^2 \mathds{1} ( |\cX_{k,\nt}| >\eta  ) \to 0 $
in probability as $n\to \infty$. 
\end{Lemma}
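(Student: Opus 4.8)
The plan is to verify the Lindeberg-type condition (I) by using the uniform bound on the martingale increments on the good event. The key observation is that Lemma \ref{LEgoodE} gives $\max_{k\leq\nt}\{|X_{k,\nt}|\vee|Y_{k,\nt}|\}\leq C(t)\sqrt{c}/\s$ on $E(m,\e)$, so that $|\cX_{k,\nt}|=|\a X_{k,\nt}+\b Y_{k,\nt}|\leq (|\a|+|\b|)C(t)\sqrt{c}/\s$ there. Since we are in the regime $n\to\infty$, $c\to 0$, $nc\to 1$, and $\s\gg\d\asymp\sqrt{c}$, the quantity $\sqrt{c}/\s\to 0$; hence for $n$ large enough the event $\{|\cX_{k,\nt}|>\eta\}$ is empty for \emph{every} $k\leq\nt$ on the good event. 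Therefore on $E(m,\e)$ the sum $\sum_{k=1}^\nt\cX_{k,\nt}^2\mathds{1}(|\cX_{k,\nt}|>\eta)$ is identically zero once $n$ is large.

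It then remains to handle the complement of the good event. First I would write
\[ \bbP\Big( \sum_{k=1}^\nt \cX_{k,\nt}^2\mathds{1}(|\cX_{k,\nt}|>\eta) > 0 \Big) \leq \bbP(E(m,\e)^c), \]
valid for all $n$ large enough by the argument above. By Theorem \ref{teoJ}, with the standing choices $m=\lfloor\d^{-6}\rfloor$ and $\e=\d^{2/3}\log(1/\d)$, we have $\bbP(E(m,\e)^c)\to 0$ as $\d\to 0$ (indeed faster than any power of $\d$), and since $c\to 0$ forces $\d\to 0$ via \eqref{CD}, the right-hand side vanishes in the limit. This gives convergence to $0$ in probability, which is exactly condition (I) for the linear combination $\cX_{k,\nt}$.

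I do not expect any serious obstacle here: the lemma is essentially a packaging of the deterministic bound from Lemma \ref{LEgoodE} together with the probabilistic control of the good event from Theorem \ref{teoJ}. The only mild point to be careful about is making the phrase ``for $n$ large enough'' quantitative — one needs $C(t)(|\a|+|\b|)\sqrt{c}/\s<\eta$, which holds for all sufficiently small $c$ (equivalently large $n$) precisely because the hypothesis $\s\gg\d$ combined with $\d\asymp\sqrt c$ gives $\sqrt c/\s\to0$. Once that threshold is passed, the indicator is deterministically zero off a set of probability $\bbP(E(m,\e)^c)$, and the conclusion follows.
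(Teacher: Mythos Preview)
Your argument is correct and uses the same decomposition as the paper---split on the good event $E(m,\e)$, invoke Lemma~\ref{LEgoodE} on it, and control the complement via Theorem~\ref{teoJ}. The paper's proof is organised slightly differently: it bounds $\bbP\big(\sum_k \cX_{k,\nt}^2\mathds{1}(|\cX_{k,\nt}|>\eta)>\epsilon\big)\leq \bbP(\max_k|\cX_{k,\nt}|>\eta)$, applies Markov's inequality to the maximum, and then splits the expectation $\bbE(\max_k|\cX_{k,\nt}|)$ over $E(m,\e)$ and its complement, using the crude bound of Lemma~\ref{badE} on $E(m,\e)^c$. Your route is marginally more direct: by observing that the indicators are deterministically zero on $E(m,\e)$ once $n$ is large, you obtain the event inclusion $\{\sum_k\cX_{k,\nt}^2\mathds{1}(|\cX_{k,\nt}|>\eta)>0\}\subseteq E(m,\e)^c$ and thereby bypass both Markov's inequality and Lemma~\ref{badE}. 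The trade-off is negligible; both arguments hinge on the same two inputs.
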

\begin{proof}
For any $\epsilon >0$ we have:
	\[ \begin{split} 
	\bbP \bigg(   \sum_{k=1}^\nt  \cX_{k,\nt}^2  \mathds{1} (
	 |\cX_{k,\nt}| & >\eta )  >\epsilon  \bigg)  
	\leq \bbP \bigg( \max_{1\leq k \leq \nt} 
	  |\cX_{k,\nt}| >\eta \bigg) 
	  \leq \frac{1}{\eta} \bbE \bigg( \max_{1\leq k \leq \nt} 
	  |\cX_{k,\nt}|  \bigg) 
	  \\ & 
	  = \frac{1}{\eta} \bbE \bigg( \max_{1\leq k \leq \nt} 
	  |\cX_{k,\nt}| \, ; E(m,\e )^c  \bigg)  + 
	  \frac{1}{\eta} \bbE \bigg( \max_{1\leq k \leq \nt} 
	  |\cX_{k,\nt}| \, ; E(m,\e ) \bigg) \, . 
	  \end{split}  
	\]
The fact that  the first term in the r.h.s. converges to zero as $n\to\infty$ follows from Lemma \ref{badE} and Theorem \ref{teoJ}, while  
convergence to zero of the second term is a straightforward consequence of Lemma \ref{LEgoodE}. 
\end{proof}

We now concentrate on Assumption (II). The first step consists in replacing condition (II) with a more convenient one, involving conditional second moments. 
The following result shows that, provided $\s$ is large enough with respect to $c$, this is allowed.
 
\begin{Lemma} \label{replace}
Assume $\s \gg \sqrt{\d} $, and that
 $\ds \sum_{k=1}^\nt \bbE ( \cX_{k,\nt}^2 | \mathscr{F}_{k+1,\nt} ) \to s^2$ in probability as $n\to\infty$ for some $s^2>0$. Then also 
$\ds \sum_{k=1}^\nt  \cX_{k,\nt}^2 \to s^2$ in probability as $n\to\infty$. 
\end{Lemma}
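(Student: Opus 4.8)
The plan is to show that the difference $\sum_{k=1}^{\nt} \big( \cX_{k,\nt}^2 - \bbE(\cX_{k,\nt}^2 \mid \mathscr{F}_{k+1,\nt}) \big)$ converges to $0$ in $L^1$ (hence in probability), which immediately gives the claim since by hypothesis the conditional sum converges to $s^2$. Write $D_{k,\nt} := \cX_{k,\nt}^2 - \bbE(\cX_{k,\nt}^2 \mid \mathscr{F}_{k+1,\nt})$. Then $(D_{k,\nt})_{k \le \nt}$ is itself a backwards martingale difference array with respect to $(\mathscr{F}_{k,\nt})_{k\le \nt}$, so the partial sums are orthogonal in $L^2$ and
\[ \bbE\bigg[ \Big( \sum_{k=1}^{\nt} D_{k,\nt} \Big)^2 \bigg] = \sum_{k=1}^{\nt} \bbE\big[ D_{k,\nt}^2 \big] \le \sum_{k=1}^{\nt} \bbE\big[ \cX_{k,\nt}^4 \big]. \]
So it suffices to prove $\sum_{k=1}^{\nt} \bbE[\cX_{k,\nt}^4] \to 0$.

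To bound the fourth moments, I would split according to the good event $E(m,\e)$ of Theorem~\ref{teoJ}. On $E(m,\e)$, Lemma~\ref{LEgoodE} gives $|\cX_{k,\nt}| \le C(t)(|\a| + |\b|)\sqrt{c}/\s$ (provided $\s \gg \d$, which follows from $\s \gg \sqrt{\d}$ and $\d$ small), so that
\[ \sum_{k=1}^{\nt} \bbE\big[ \cX_{k,\nt}^4 \, ; \, E(m,\e) \big] \le \nt \cdot \Big( \frac{C(t)(|\a|+|\b|)\sqrt{c}}{\s} \Big)^4 \le C(t,\a,\b) \, \frac{c^2}{\s^4} \cdot \frac{1}{c} = C(t,\a,\b)\,\frac{c}{\s^4}, \]
using $\nt \asymp 1/c$; this tends to $0$ as $c \to 0$ precisely because $\s \gg \sqrt{\d} \asymp c^{1/4}$, i.e. $c/\s^4 \to 0$. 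On the complement, Lemma~\ref{badE} gives the crude bound $|\cX_{k,\nt}| \le C(|\a|+|\b|)/\sqrt{c}$, hence
\[ \sum_{k=1}^{\nt} \bbE\big[ \cX_{k,\nt}^4 \, ; \, E(m,\e)^c \big] \le \nt \cdot \frac{C(\a,\b)}{c^2} \cdot \bbP\big( E(m,\e)^c \big) \le \frac{C(\a,\b)}{c^3}\, \bbP\big( E(m,\e)^c \big), \]
which vanishes because Theorem~\ref{teoJ} guarantees $\d^{-k}\bbP(E(m,\e)^c) \to 0$ for every $k$, in particular killing the polynomial factor $c^{-3} \asymp \d^{-6}$.

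The only genuine subtlety is to confirm that $(D_{k,\nt})$ really forms a backwards martingale difference array so that the cross terms vanish: $D_{k,\nt}$ is $\mathscr{F}_{k,\nt}$-measurable and $\bbE(D_{k,\nt} \mid \mathscr{F}_{k+1,\nt}) = 0$ by construction, and for $j < k$ one has $\bbE(D_{j,\nt} D_{k,\nt}) = \bbE\big( D_{k,\nt}\, \bbE(D_{j,\nt}\mid \mathscr{F}_{k,\nt})\big) = 0$ since $\mathscr{F}_{j+1,\nt} \subseteq \mathscr{F}_{k,\nt}$; this is where integrability of $\cX_{k,\nt}^2$ is needed, which is immediate from the a.s.\ bound in Lemma~\ref{badE}. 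I expect the main (though still routine) obstacle to be nothing more than carefully tracking the exponents of $c$ and $\d$ to verify that the hypothesis $\s \gg \sqrt{\d}$ is exactly the threshold that makes the good-event contribution $c/\s^4$ negligible — everything else is orthogonality plus the two already-established pointwise bounds.
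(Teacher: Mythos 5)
Your proof is correct and follows essentially the same route as the paper: define the backwards martingale difference array $D_{k,\nt} = \cX_{k,\nt}^2 - \bbE(\cX_{k,\nt}^2 \mid \mathscr{F}_{k+1,\nt})$, use orthogonality of its increments to reduce to bounding $\sum_k \bbE[\cX_{k,\nt}^4]$, and then split on the good event $E(m,\e)$ using Lemmas~\ref{badE} and~\ref{LEgoodE} together with Theorem~\ref{teoJ}. The only cosmetic difference is that the paper phrases the conclusion via Chebyshev's inequality while you go through $L^2 \to L^1 \to$ probability, and you spell out the exponent bookkeeping ($c/\s^4 \to 0$ iff $\s \gg c^{1/4} \asymp \sqrt{\d}$) that the paper leaves implicit.
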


\begin{proof}
Let $M_{k,\nt} :=  \cX_{k,\nt}^2 - \bbE (   \cX_{k,\nt}^2 | \mathscr{F}_{k+1,\nt})$. 
It is readily checked that $(M_{k,\nt})_{k\leq \nt}$ is a backwards martingale difference array with respect to the filtration $(\mathscr{F}_{k,\nt} )_{k\leq \nt}$. We aim to show that for any $\eta >0$ it holds $\ds \bbP \Big( \Big| \sum_{k=1}^\nt M_{k,\nt} \Big| >\eta \Big) \to 0$ as $n\to\infty$. 
Indeed,
	\[ \begin{split} 
	 \bbP \Big( \Big| \sum_{k=1}^\nt M_{k,\nt} \Big| >\eta \Big) 
	 & \leq \frac{1}{\eta^2} \bbE \bigg( \bigg[ \sum_{k=1}^\nt M_{k,\nt} \bigg]^2 \bigg) 
	= \frac{1}{\eta^2}  \sum_{k=1}^\nt \bbE (M_{k,\nt}^2) 
	  \stackrel{(*)}\leq  \frac{1}{\eta^2}  \sum_{k=1}^\nt  \bbE ( \cX_{k,\nt}^4 ) \\
	 & =\frac{1}{\eta^2} 
	 \sum_{k=1}^\nt \bbE ( \cX_{k,\nt}^4 ; E(m,\e )^c )
	 + \frac{1}{\eta^2} \sum_{k=1}^\nt \bbE ( \cX_{k,\nt}^4 ; E(m,\e ) )  \, . 
	 \end{split} \]
Above, 
$(*)$ follows from the general inequality $\bbE((X-\bbE(X))^2) \leq \bbE(X^2)$, which we apply to each term  with respect to $\bbE ( \, \cdot \, | \mathscr{F}_{k,\nt} )$.
The fact that both terms in the r.h.s. converge to zero as $n\to\infty$ is now a consequence of the bounds for $|X_{k,\nt } |$ and $|Y_{k,\nt}|$, and hence for $|\cX_{k,\nt}|$, obtained in Lemmas \ref{badE}--\ref{LEgoodE}. 
\end{proof}

In light of the above result, it remains to compute the limit in probability of 
	\[ \begin{split} 
	\sum_{k=1}^\nt \bbE ( \cX_{k,\nt}^2 | \mathscr{F}_{k+1,\nt} )
	= \a^2 \sum_{k=1}^\nt & \bbE ( X_{k,\nt}^2 |  \mathscr{F}_{k+1,\nt} )
	 + \b^2 \sum_{k=1}^\nt \bbE ( Y_{k,\nt}^2 | \mathscr{F}_{k+1,\nt} )
	\\ & 
	+ 2\a \b \sum_{k=1}^\nt \bbE ( X_{k,\nt} Y_{k,\nt} | \mathscr{F}_{k+1,\nt} ) \, ,
	\end{split}\]
and prove that it coincides with $(\a^2 + \b^2) v^2_t(\s )$, where $v^2_t(\s ) $ is the limiting variance introduced in Theorem \ref{pointwise}.
The following result shows that, in fact, it suffices to compute the limit of the first term in the r.h.s. above. 

\begin{Proposition} \label{simpler}
It holds $\bbE (X_{k,\nt}^2 | \mathscr{F}_{k+1,\nt} ) = \bbE (Y_{k,\nt}^2 | \mathscr{F}_{k+1,\nt} )$, and $\bbE( X_{k,\nt} Y_{k,\nt} | \mathscr{F}_{k+1,\nt} ) =0$ almost surely for all $k\leq \nt$. 
\end{Proposition}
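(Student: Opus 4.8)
The plan is to exploit the conformal invariance built into the definition of the maps $F_k(z) = e^{i\Theta_k}F(e^{-i\Theta_k}z)$, together with the fact that $\Theta_k$ is independent of $\mathscr{F}_{k+1,\nt}$ and uniform on $[-\pi,\pi)$. Conditionally on $\mathscr{F}_{k+1,\nt}$, the quantity $Z := Z_{k,\nt}^\s(a) = F_{k+1}\circ\cdots\circ F_\nt(e^{ia+\s})$ is a fixed point with $|Z|>1$, and the only remaining randomness in $X_{k,\nt}$ and $Y_{k,\nt}$ comes through $\Theta_k$. Write $\log\frac{F(w)}{w} = g(w)$, a holomorphic function on $\{|w|>1\}$ (with the branch fixed by $g(w)\to c$ as $|w|\to\infty$), so that $X_{k,\nt} = \frac{1}{\sqrt c}(\Re\, g(e^{-i\Theta_k}Z) - c)$ and $Y_{k,\nt} = \frac{1}{\sqrt c}\Im\, g(e^{-i\Theta_k}Z)$. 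Thus the three conditional expectations are, up to constants,
\[
\bbE\big(\Re g(e^{-i\Theta_k}Z)^2\big),\quad \bbE\big(\Im g(e^{-i\Theta_k}Z)^2\big),\quad \bbE\big(\Re g(e^{-i\Theta_k}Z)\,\Im g(e^{-i\Theta_k}Z)\big),
\]
where the expectation is over $\Theta_k\sim\mathrm{Uniform}$, i.e. over $e^{-i\Theta_k}Z$ ranging over the circle $|Z|\bbT$.

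The key step is a mean-value / residue computation. Since $g$ is holomorphic on $\{|w|>1\}$ with $g(w)\to c$ at infinity, the function $g(w)^2$ is also holomorphic there, and averaging over the circle $\{|w| = |Z|\}$ gives, by the mean value property for the exterior domain (equivalently, by expanding $g(w) = c + \sum_{j\geq 1}a_j w^{-j}$ and integrating termwise),
\[
\frac{1}{2\pi}\int_{-\pi}^\pi g(e^{-i\th}Z)^2\,\dd\th = g(\infty)^2 = c^2 .
\]
Now $g(e^{-i\th}Z)^2 = \big(\Re g\big)^2 - \big(\Im g\big)^2 + 2i\,\Re g\,\Im g$, so taking real and imaginary parts of the displayed identity yields
\[
\bbE\big((\Re g)^2\big) - \bbE\big((\Im g)^2\big) = c^2 \quad\text{and}\quad \bbE\big(\Re g\cdot\Im g\big) = 0 .
\]
The second identity is precisely $\bbE(X_{k,\nt}Y_{k,\nt}\mid\mathscr{F}_{k+1,\nt}) = 0$ (the cross term $-\frac{c}{\sqrt c}\bbE(\Im g) = -\sqrt c\cdot\bbE(Y_{k,\nt})\cdot\sqrt c$ also vanishes since $\bbE(\Im g(e^{-i\Theta_k}Z)) = \Im g(\infty) = 0$ by the same mean value property applied to $g$ itself). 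For the first identity, recall from the computation preceding \eqref{XY} that $\bbE(\Re g(e^{-i\Theta_k}Z)) = c$, hence $\bbE((\Re g - c)^2) = \bbE((\Re g)^2) - c^2$ and $\bbE((\Im g)^2) = \bbE((\Im g - 0)^2)$; subtracting, $\bbE((\Re g - c)^2) - \bbE((\Im g)^2) = \bbE((\Re g)^2) - c^2 - \bbE((\Im g)^2) = 0$, which is exactly $c\,\bbE(X_{k,\nt}^2\mid\mathscr F_{k+1,\nt}) = c\,\bbE(Y_{k,\nt}^2\mid\mathscr F_{k+1,\nt})$.

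I do not expect a serious obstacle here; the only point requiring a little care is justifying the mean value identity $\frac{1}{2\pi}\int g(e^{-i\th}Z)^2\dd\th = c^2$, i.e. checking that $g$ (equivalently $g^2$) genuinely extends holomorphically to all of $\{|w|>1\}$ with the stated limit at infinity and is integrable on the relevant circle — this follows because $|Z|>1$ so the circle $|Z|\bbT$ lies strictly inside the domain of holomorphy of $w\mapsto\log\frac{F(w)}{w}$, and the Laurent expansion at infinity converges there. One should also remember to handle the constant shifts correctly: $X_{k,\nt}$ and $Y_{k,\nt}$ are centered versions ($\Re g$ is shifted by $c$, $\Im g$ by $0$), and it is the combination of "$\bbE(X)=\bbE(Y)=0$ by the martingale property" with "$\bbE((\Re g)^2) - \bbE((\Im g)^2) = c^2$ from the holomorphic average" that produces the claimed equality of conditional variances. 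Everything else is bookkeeping.
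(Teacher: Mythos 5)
Your argument is correct and uses exactly the paper's key idea: exploit the fact that $w \mapsto (\log\frac{F(w)}{w})^2$ is holomorphic on $\{|w|>1\}$ and average over the circle $|Z|\bbT$ via the mean value property / Cauchy's formula, then separate real and imaginary parts. The paper's proof is the same but slightly cleaner because it applies this to the already-centered $f(z)=\frac{1}{\sqrt{c}}(\log\frac{F(z)}{z}-c)$, which tends to $0$ at infinity, so the circle average of $f^2$ is simply $0$ and both claimed identities drop out in one step, avoiding your final bookkeeping with the constant $c$.
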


\begin{proof} 
All equalities  in this proof are intended to hold almost surely. 
Introduce the holomorphic function $f(z) := \frac{1}{\sqrt{c}} \big( \log \frac{F(z)}{z} -c \big)$ defined for $|z|>1$, so that 
	\[ \Big[ f(e^{-i\Theta_k} Z_{k,\nt} ) \Big]^2 
	= X_{k,\nt}^2 - Y_{k,\nt}^2 + 2i X_{k,\nt} Y_{k,\nt} \, . \] 
Taking conditional expectations both sides, we find
	\begin{equation}\label{fsquared}
	\frac{1}{2\pi} \int_{-\pi}^{\pi} \!\Big[ f(e^{-i \th } Z_{k,\nt} ) \Big]^2 \! \dd \th 
	= \bbE (X_{k,\nt}^2 | \mathscr{F}_{k+1,\nt}) - \bbE (Y_{k,\nt}^2 | \mathscr{F}_{k+1,\nt} ) + 2i 
	\bbE( X_{k,\nt} Y_{k,\nt} | \mathscr{F}_{k+1,\nt} ) . 
	\end{equation}
On the other hand, $f$ being holomorphic,  Cauchy's integral formula yields
	\[ \frac{1}{2\pi} \int_{-\pi}^{\pi} \Big[ f(e^{-i \th } Z_{k,\nt} ) \Big]^2 \dd \th  
	= \frac{1}{2\pi i} \int_{|z|=1} \bigg[ f\bigg(\frac{ Z_{k,\nt}}{z} \bigg) \bigg]^2 
	\frac{\dd z }{z}  = \lim_{|z|\to 0}    \bigg[ f\bigg(\frac{ Z_{k,\nt}}{z}  \bigg) \bigg]^2  
	= 0 \, . \]
Gong back to \eqref{fsquared}, this implies that both real and imaginary part of the r.h.s. must vanish almost surely, which is what we wanted to show. 
\end{proof}

Proposition \ref{simpler} already shows that the limiting Gaussian vector $\cF^\s (t,e^{ia})$ must have i.i.d. entries. It remains to compute the limiting variance, that is to show that for all $\eta >0$ it holds 
	\[\bbP \bigg( \bigg| \sum_{k=1}^\nt \bbE(X_{k,\nt}^2| \mathscr{F}_{k+1,\nt}) - v^2_t (\s) \bigg| >\eta \bigg)\to 0 \]
as $n\to \infty$. To this end it is clearly enough to work on $E(m,\e )$, the  advantage being that on this event we have 
	\[ \big| Z_{k,\nt}^\s (a)  - e^{ia + \s + (\nt-k)c} \big| \leq C(t) \e \]
for all $k\leq \nt$, as it follows directly from the definition of $E(m,\e )$ as long as $\s \gg \e$. 
Our strategy is then to replace each $Z_{k,\nt}$  by its deterministic approximation, and show that, provided $\s$ is large enough with respect to $c$, 
this does not affect the limiting variance. \\

Recall that the Poisson kernel for the unit disc $\bbD$ is given by $P_r (\th ) = \mathrm{Re} \big( \frac{1+re^{i\th}}{1-re^{i\th}} \big) $ for $r<1$, and that the function $re^{i\th} \mapsto P_r(\th)$ is harmonic in $\bbD$. Moreover, given any continuous function $f$ on $\bbT = \de \bbD$, its harmonic extension $Hf$ inside $\bbD$ is given by Poisson's integral formula 
	\[ (Hf) (re^{i\th}) = \frac{1}{2\pi} \int_{-\pi}^\pi P_r (\th - t) f (e^{it}) \dd t 
	= (P_r * f) (\th) \, . \]
We denote by $Q_r(\th)$ the harmonic conjugate of $P_r(\th)$ in $\bbD$, i.e. $Q_r (\th) =  \mathrm{Im} \big( \frac{1+re^{i\th}}{1-re^{i\th}} \big) $. 

\begin{Lemma}\label{LErep}
Assume $\s \gg \e$. Then
there exists a constant $C(t)$, depending only on $t$, such that on the event $E(m, \e )$ we have
	\[  \bigg| \bbE(X_{k,\nt}^2| \mathscr{F}_{k+1,\nt}) +c - \frac{c}{2\pi } \int_{-\pi}^\pi 
	\big(  P_{e^{-\s - (\nt-k)c}}(\th) \big) ^2 \dd \th \bigg| \leq \frac{C(t) c\e}{(\s + (\nt -k)c)^3} \, \]
for all $k\leq \nt$, and $n$ large enough.
\end{Lemma}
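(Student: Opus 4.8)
The plan is to compute $\bbE(X_{k,\nt}^2 \mid \mathscr{F}_{k+1,\nt})$ explicitly via the harmonicity of $u = \Re\log\frac{F(z)}{z}$ and then replace the random point $Z_{k,\nt}$ by its deterministic approximation $\zeta_k := e^{ia+\s+(\nt-k)c}$, using the good event to control the error. First I would recall from the definition \eqref{XY} that $X_{k,\nt} = \frac{1}{\sqrt c}(u(e^{-i\Theta_k}Z_{k,\nt}) - c)$ where $u(w) = \log|F(w)/w|$ is harmonic on $|w|>1$ and tends to $c$ at infinity, and that $\Theta_k$ is uniform and independent of $\mathscr{F}_{k+1,\nt}$. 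Hence, writing $Z = Z_{k,\nt}$ (which is $\mathscr{F}_{k+1,\nt}$-measurable),
\[ \bbE(X_{k,\nt}^2 \mid \mathscr{F}_{k+1,\nt}) = \frac{1}{c}\cdot\frac{1}{2\pi}\int_{-\pi}^\pi \big( u(e^{-i\th}Z) - c\big)^2\,\dd\th = \frac{1}{c}\cdot\frac{1}{2\pi}\int_{-\pi}^\pi u(e^{-i\th}Z)^2\,\dd\th - c, \]
since $\frac{1}{2\pi}\int u(e^{-i\th}Z)\,\dd\th = c$ by the mean value property (this is exactly the computation already used to show $(X_{k,\nt})$ is a martingale array). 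So the left-hand side of the claim is $\frac{1}{2\pi c}\int_{-\pi}^\pi u(e^{-i\th}Z)^2\,\dd\th - \frac{c}{2\pi}\int(P_{e^{-\s-(\nt-k)c}}(\th))^2\,\dd\th$, and it remains to estimate the difference of these two integrals.

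The second step is to identify the main term. By Corollary \ref{Pref} (equivalently \eqref{Gz} transported to $F$, cf. Corollary \ref{Fcor}) we have $u(w) + c + \Re\frac{2c}{w-1} = O(c^{3/2}|w|/(|w|-1)^2)$, so $u(w) = -c\,\Re\frac{w+1}{w-1} + O(c^{3/2}|w|/(|w|-1)^2)$. Now observe that $-\Re\frac{w+1}{w-1}$, evaluated at $w = Z/z$ with $|z|=1$, is, up to reparametrising $z = e^{i\th}$, exactly the Poisson kernel: with $r = 1/|Z| < 1$ one has $-\Re\frac{w+1}{w-1} = \Re\frac{1 - w^{-1}\cdot\bar{?}}{\cdots}$ — more cleanly, $-\frac{w+1}{w-1} = \frac{1+1/w}{1-1/w}$, so $u(e^{-i\th}Z) \approx c\,\Re\frac{1 + e^{i\th}/Z}{1 - e^{i\th}/Z} = c\,P_{1/|Z|}(\th - \arg Z)$. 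Squaring and dividing by $c$, the leading term of $\frac{1}{2\pi c}\int u(e^{-i\th}Z)^2\,\dd\th$ is $\frac{c}{2\pi}\int P_{1/|Z|}(\th)^2\,\dd\th$ (the shift by $\arg Z$ is immaterial by periodicity). This is the claimed main term once we replace $|Z|$ by $e^{\s+(\nt-k)c}$.

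The third step is the two successive replacements and their error bounds. (a) Replace $u(e^{-i\th}Z)$ by $c\,P_{1/|Z|}(\th - \arg Z)$: writing $u^2 - (cP)^2 = (u-cP)(u+cP)$, the factor $u - cP = O(c^{3/2}|w|/(|w|-1)^2)$ with $|w| = |Z| \in (e^\s, C(t))$ on the good event, so $|w|-1 \asymp \s + (\nt-k)c$ after using $|Z - \zeta_k| \le C(t)\e$ to get $|Z| - 1 \asymp \s + (\nt - k)c$; and $u + cP = O(c/(|w|-1))$ (both $u$ and $cP$ are $O(c/(|w|-1))$). Integrating over $\th$ and dividing by $c$ gives a contribution of order $c^{1/2}\cdot\frac{1}{(\s+(\nt-k)c)^2}\cdot\frac{1}{(\s+(\nt-k)c)}$ — here one has to be slightly careful because the Poisson kernel concentrates near one point, so the naive sup bound on each factor over $\th$ is too lossy; instead I would bound $\int |u-cP|\cdot|u+cP|\,\dd\th \le \|u - cP\|_\infty \int |u + cP|\,\dd\th$ and use $\int |P_{1/|Z|}(\th)|\,\dd\th \le C\log\frac{1}{(|Z|-1)}$ or simply $\le C$ when $|Z|-1 \gtrsim \s$ is not too small — actually $\frac{1}{2\pi}\int|P_r| = 1$ since $P_r \ge 0$, so $\int|u+cP|\,\dd\th \le \int|u|\,\dd\th + c\int P_r\,\dd\th$, and $\int|u|\,\dd\th$ is bounded using $u \le 0$ and $\frac{1}{2\pi}\int u\,\dd\th = -c$... wait, $u > 0$ here by (P1) since $|F(w)|>|w|$, hmm — in any case $\frac{1}{2\pi}\int u(e^{-i\th}Z)\,\dd\th = c$, so $\int |u|\,\dd\th = 2\pi c$, giving $\int|u+cP|\,\dd\th \le 4\pi c$. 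Then $\|u-cP\|_\infty \le C c^{3/2}\cdot\frac{C(t)}{(\s+(\nt-k)c)^2}$, and the whole error after dividing by $c$ is $\le C(t)\frac{c^{1/2}}{(\s+(\nt-k)c)^2}$. This is \emph{not} quite the claimed bound $\frac{C(t)c\e}{(\s+(\nt-k)c)^3}$, so I expect one actually needs to keep the exact identity $\frac{1}{2\pi c}\int u^2 - c = \bbE(X^2\mid\cdot)$ and split more carefully: write $u = cP_{1/|Z|}(\cdot - \arg Z) + R$ with $R = O(c^{3/2}\cdot)$, so $u^2 = c^2P^2 + 2cPR + R^2$; then $\frac{1}{2\pi c}\int u^2 = \frac{c}{2\pi}\int P^2 + \frac{1}{\pi}\int PR + \frac{1}{2\pi c}\int R^2$, and $\frac{1}{2\pi c}\int R^2 = O(c^2/(\s+\cdot)^4)$ is lower order while $\frac{1}{\pi}\int PR$ is the term to watch. (b) Replace $1/|Z|$ by $e^{-\s-(\nt-k)c}$ in $\frac{c}{2\pi}\int P^2\,\dd\th$: since $\big|1/|Z| - e^{-\s-(\nt-k)c}\big| \le C(t)\e\,e^{-\s-(\nt-k)c}$ on the good event and $r\mapsto \frac{1}{2\pi}\int P_r(\th)^2\,\dd\th = \frac{1+r^2}{1-r^2}$ has derivative $O(1/(1-r)^2)$, this replacement costs $O(c\e/(\s+(\nt-k)c)^2)$, again not obviously matching the stated power $3$ — so I suspect the correct reading is that these various errors, summed over $k \le \nt \asymp 1/c$, must telescope or be dominated by the claimed bound, and the extra power of $\s + (\nt-k)c$ comes from a sharper analysis of how $P_r$ and its perturbation pair against the smooth remainder $R$, using that $R$ is itself the real part of a holomorphic function (so one can integrate by parts / use the conjugate kernel $Q_r$).

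The main obstacle, then, is getting the error estimate with the \emph{precise} decay $(\s+(\nt-k)c)^{-3}$ rather than a weaker power: this requires not treating $\int P_r^2$ and $\int P_r R$ by crude $L^\infty\times L^1$ bounds but exploiting cancellation — concretely, expanding $P_r(\th) = 1 + 2\sum_{j\ge1}r^j\cos j\th$ and the remainder's Fourier/Laurent coefficients, or writing everything as a contour integral $\frac{1}{2\pi i}\oint_{|z|=1}[f(Z/z)]^2\frac{\dd z}{z}$ as in the proof of Proposition \ref{simpler} and pushing the contour, so that the error is controlled by the size of the holomorphic defect $\log\frac{F(w)}{w} - c\frac{w+1}{w-1}$ times a geometric factor. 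Once the correct $k$-dependent bound is in hand, the lemma follows by combining it with $|Z_{k,\nt} - \zeta_k| \le C(t)\e$ from the good event (valid since $\s \gg \e$) and the elementary estimate $|Z_{k,\nt}| - 1 \asymp \s + (\nt-k)c$.
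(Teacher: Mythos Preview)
Your approach is the paper's (Appendix \ref{Alemma} proves the more general Lemma \ref{Llong}; Lemma \ref{LErep} is the case $s=t$, $a=0$): write $\bbE(X_{k,\nt}^2\mid\mathscr{F}_{k+1,\nt}) + c = \frac{1}{2\pi c}\int_{-\pi}^\pi g_\th(Z)^2\,\dd\th$ with $g_\th(z)=\Re\log\frac{F(e^{-i\th}z)}{e^{-i\th}z}$, compare to $\frac{1}{2\pi c}\int h_\th(W)^2\,\dd\th$ where $h_\th(z)=c\,\Re\frac{e^{-i\th}z+1}{e^{-i\th}z-1}$ and $W=e^{ia+\s+(\nt-k)c}$, and bound the difference by crude $L^\infty$-in-$\th$ estimates on both factors of $g(Z)^2-h(W)^2=(g(Z)+h(W))(g(Z)-h(W))$, with $g(Z)-h(W)$ split as $(g(Z)-h(Z))+(h(Z)-h(W))$. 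No cancellation, Fourier expansion or contour shifting is used; the ``main obstacle'' you identify does not exist.

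Two slips are misleading you. In step (a), your sup--sup bound gives, after dividing by $c$, order $c^{3/2}/(\s+(\nt-k)c)^3$, not $c^{1/2}$: indeed $(u-cP)(u+cP)=O\big(c^{3/2}/(\cdot)^2\big)\cdot O\big(c/(\cdot)\big)=O\big(c^{5/2}/(\cdot)^3\big)$ \emph{before} dividing by $c$ (and Corollary \ref{Fcor} actually gives $(|w|-1)^3$ in the remainder, not the power $2$ you quoted from Corollary \ref{Pref}, so the bound is even better). Since $\sqrt c\asymp\d\ll\e$, this is already $\le C(t)\,c\e/(\cdot)^3$. In step (b), your bound $c\e/(\s+(\nt-k)c)^2$ is \emph{stronger} than the claimed $c\e/(\s+(\nt-k)c)^3$, because $\s+(\nt-k)c\le 1+t$. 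So the elementary pointwise bounds you wrote down before second-guessing yourself already prove the lemma; the last paragraph of your proposal is unnecessary.
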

This result follows by a more general one, namely Lemma \ref{Llong} in the next section, and the proof is therefore omitted.  
Assume now that $\s \gg \sqrt{\e}$. Then we deduce from Lemma \ref{LErep} that on $E(m,\e )$ it holds: 
	\[ \begin{split} 
	\bigg| \sum_{k=1}^\nt \bbE(X_{k,\nt}^2| \mathscr{F}_{k,\nt}) - \sum_{k=1}^\nt \frac{c}{2\pi } \int_{-\pi}^\pi &\big(  P_{e^{-\s - (\nt-k)c}}(\th) \big) ^2 \dd \th + \nt c \bigg| 
	\leq  \sum_{k=1}^\nt  \frac{C(t) c\e }{(\s + (\nt -k)c)^3} \\ & 
	\leq C(t) \e \int_\s^{\s +\nt c} \frac{\dd x }{x^3} 
	=   C(t) \e \bigg( \frac{1}{2\s^2} - \frac{1}{2(\s+\nt c)^2} \bigg) \to 0 
	\end{split} \]
as $n \to \infty$. Note that $\sqrt{\e} \asymp \d^{1/3} $ (apart from logarithmic corrections), so the assumption $\s \gg \sqrt{\e}$ is stronger than the previous one $\s \gg \sqrt{\d}$. 

In conclusion, we have shown that that, provided $\s \gg \sqrt{\e}$, the limiting variance is given by the deterministic expression
	\[ \begin{split}
	\lim_{n\to\infty} \bigg( \frac{c}{2\pi} & \sum_{k=1}^\nt  \int_{-\pi}^\pi  
	\big(  P_{e^{-\s - (\nt-k)c}}(\th) \big) ^2\dd \th - \nt c \bigg)  
	=  \int_{\s}^{\s+t} \frac{1}{2\pi} \int_{-\pi}^{\pi} \big(  P_{e^{-x}}(\th) \big) ^2 \dd \th \dd x - t   \\ & 
	= 	\int_{\s}^{\s+t} (P_{e^{-x}}  * P_{e^{-x}})(0) \dd x -t 
	= 	\int_{\s}^{\s+t} \frac{1+e^{-2x}}{1-e^{-2x}} \dd x -t =  \log \frac{1-e^{-2(\s +t)}}{1-e^{-2\s}} 
	= v^2_t(\s )  \, , 
	\end{split} \]
where we have computed the inner integral by mean of Poisson's integral formula. Finally, if $\s >0$ is kept fixed as $n\to\infty$ the assumption $\s \gg \sqrt{\e}$ is trivially satisfied, so this concludes the proof of Theorem \ref{pointwise}.

\vspace{2mm}
\section{The fluctuation process on $C(\bbT)$} \label{S4}
Having a pointwise convergence result, it is natural to ask if this can be extended to obtain convergence of random fields. Recall that $\bbT = \{ |z| =1 \}$ denotes the unit circle, and let $C(\bbT)$ denote the space of continuous functions from $\bbT$ to $\bbC$, equipped with the supremum norm 
\begin{equation}\label{sup_norm} 
 \| x  \|_{\infty } = \sup_{\th \in [-\pi , \pi ) } | x (e^{i\th }) | \, . 
 \end{equation} 
Moreover, 
let $D[0,\infty )$ denote the space of c{\`a}dl{\`a}g functions $x:[0,\infty )\to C(\bbT )$. 
The goal of this section is to prove the following result.
\begin{Theorem} \label{Tfields}
Fix any $\s >0$,  and let $\cF^\s_n$ denote the $C(\bbT)$-valued c{\`a}dl{\`a}g stochastic process defined by 
	\[ \cF^\s _n (t,e^{ia}) = \frac{1}{\sqrt{c}} \bigg( \log \frac{\Phi_\nt (e^{ia+\s})}{e^{ia+\s}} -\nt c \bigg)  \]
for $e^{ia} \in \bbT$ and $t\geq 0$. 
Then there exists a continuous zero mean Gaussian process $\cF^\s : [0,\infty ) \to C(\bbT )$  whose covariance structure is given by 
	\[ 
	\mathrm{Cov} \big( \cF^\s (t, e^{ia}) \big) = v^2_t(\s)
	 \left( \begin{matrix} 1 & 0\\ 0 & 1 \end{matrix} \right) \, , \qquad 
	\mathrm{Cov }\big( \cF^\s (t,e^{ia}) ,  \cF^\s (s,e^{ib}) \big) = 
	\left( \begin{matrix} c_{s,t}(\s ,a-b) & \hat{c}_{s,t}(\s , a-b) \\ -\hat{c}_{s,t}(\s ,a-b ) & c_{s,t}(\s ,a-b) 
	\end{matrix}\right) \, ,
	\]
where $v^2_t(\s) = c_{t,t}(\s ,0)$, and for $s<t$
	\[ c_{s,t}(\s ,\a ) := \mathrm{Re} \bigg( \log  \frac{ 1-e^{-2\s -(t+s) +i\a } }{ 1 - e^{-2\s -(t-s) +i\a}} \bigg) \, , \qquad 
	\hat{c}_{s,t}(\s ,\a  ) = \mathrm{Im} \bigg( \log \frac{ 1-e^{-2\s -(t+s) +i\a } }{ 1 - e^{-2\s -(t-s) +i\a}} \bigg) \, ,  \]
such that $\cF^\s_n \to \cF^\s$ in distribution as $n\to\infty$, in the sense of weak convergence of probability measures on the space $D[0,\infty )$ equipped with the Skorokhod topology. 
\end{Theorem}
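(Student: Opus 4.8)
The plan is to upgrade the pointwise CLT of Theorem \ref{pointwise} to a functional CLT in the space $D[0,\infty)\to C(\bbT)$ in the standard three-step way: (i) establish convergence of finite-dimensional distributions; (ii) prove tightness of $(\cF^\s_n)$ as $C(\bbT)$-valued processes, i.e.\ both spatial and temporal tightness; (iii) identify the limit as the claimed Gaussian process. Since each $\cF^\s_n(t,\cdot)$ is built from the backwards martingale difference arrays \eqref{XY}, all of these steps can be handled by the same martingale CLT machinery (Theorem \ref{ML_CLT}) combined with the deterministic replacement $Z^\s_{k,\nt}(a)\approx e^{ia+\s+(\nt-k)c}$ valid on the good event $E(m,\e)$.

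\emph{Finite-dimensional distributions.} Fix finitely many space-time points $(t_1,e^{ia_1}),\dots,(t_r,e^{ia_r})$. By Cram\'er--Wold it suffices to prove a CLT for an arbitrary real linear combination $\sum_j (\a_j X^\s_{\cdot,\nt_j}(a_j)+\b_j Y^\s_{\cdot,\nt_j}(a_j))$. The obstacle is that the arrays live at different terminal times $\nt_j$; one first passes to a common terminal index $N=\nt_{\max}$ and writes each $\log(\Phi_{\nt_j}(z_j)/z_j)$ as a partial sum of the array with terminal index $N$, so that everything becomes a single backwards martingale difference array with respect to $(\mathscr F_{k,N})$. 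Conditions (I) and (II) of Theorem \ref{ML_CLT} are then verified exactly as in Lemmas \ref{le3} and \ref{replace}: the Lindeberg condition follows from the uniform bound on the good event (Lemma \ref{LEgoodE}) together with Theorem \ref{teoJ}, and one reduces the quadratic variation to conditional second moments. The new ingredient is the computation of the \emph{cross} conditional second moments $\bbE(X^\s_{k,\nt}(a)\,X^\s_{k,\ns}(b)\mid \mathscr F_{k+1,N})$ etc.; using that the relevant functions are holomorphic and the mean-value property of the Poisson kernel, these reduce---after substituting the deterministic approximants for $Z$---to integrals of products of Poisson kernels, which evaluate (via Poisson's formula, as at the end of Section \ref{S3}) to $c$ times the increments of $\mathrm{Re}\log\frac{1-e^{-2\s-(t+s)+i\a}}{1-e^{-2\s-(t-s)+i\a}}$ and its imaginary part. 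Summing over $k$ and passing to the limit with $nc\to1$ yields exactly the covariance kernel $c_{s,t}(\s,a-b)$, $\hat c_{s,t}(\s,a-b)$ in the statement; an error term of order $\e/\s^3$ summed over $k$ is controlled just as in Lemma \ref{LErep}.

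\emph{Tightness.} For spatial regularity one uses the holomorphicity: since $\s>0$ is fixed, $\cF^\s_n(t,\cdot)$ extends holomorphically to an annulus around $\bbT$, and by Cauchy estimates control of $\sup_{|z|=e^{\s/2}}|\cF^\s_n(t,z)|$ in $L^p$ gives equicontinuity on $\bbT$; the required moment bounds again come from the martingale structure and the uniform bound on the good event (off the good event one uses Lemma \ref{badE} and the superpolynomial smallness in Theorem \ref{teoJ}). For temporal tightness in the Skorokhod topology one estimates increments $\bbE\|\cF^\s_n(t,\cdot)-\cF^\s_n(s,\cdot)\|_\infty^4$ over intervals $[s,t]$: writing the difference as a sum over the $\nt-\ns$ ``new'' array terms, and using that each term is $O(\sqrt c/\s)$ on the good event while the conditional variance of the increment is $O((t-s))$ uniformly, one obtains a bound of the form $C(t-s)^2$ (plus a negligible good-event-complement contribution), which by the standard criterion (e.g.\ Billingsley) yields tightness of the laws on $D([0,\infty),C(\bbT))$ and rules out jumps in the limit.

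\emph{Identification and conclusion.} Tightness plus convergence of finite-dimensional distributions gives $\cF^\s_n\to\cF^\s$ in distribution on $D[0,\infty)$, where $\cF^\s$ is the centered process with the stated covariance; its finite-dimensional distributions are jointly Gaussian (being limits of martingale-CLT Gaussians), and a Kolmogorov-type continuity estimate inherited from the tightness bounds shows $\cF^\s$ has a modification with continuous paths in $C(\bbT)$, so in fact the convergence takes place in $C([0,\infty),C(\bbT))$. The main obstacle I expect is the joint/cross-term analysis in step (i): keeping careful track of the different terminal indices $\nt_j$ when merging the arrays, and showing that the mixed conditional covariances converge to the correct holomorphic kernel rather than to something that depends on $\mathrm{Arg}$; once the deterministic replacement of $Z^\s_{k,\nt}$ is in force this is a finite computation with Poisson kernels, but organizing it cleanly (and checking the error sums) is the technical heart of the proof. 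Temporal tightness over an unbounded time interval is the second, more routine, source of bookkeeping.
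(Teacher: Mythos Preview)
Your treatment of the finite-dimensional distributions is essentially the same as the paper's: merge the arrays at the largest terminal index, apply Cram\'er--Wold, verify (I)--(II) of Theorem~\ref{ML_CLT} via Lemmas~\ref{le3}--\ref{replace}, and compute the cross conditional covariances by the deterministic Poisson-kernel replacement (the paper packages this last step as Lemma~\ref{Llong}, a two-point version of Lemma~\ref{LErep}). That part is fine.

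The gap is in your temporal tightness argument. You write that the time increment $\cF^\s_n(t,\cdot)-\cF^\s_n(s,\cdot)$ is ``a sum over the $\nt-\ns$ `new' array terms''. This is false: the arrays $X^\s_{k,\nt}(a),Y^\s_{k,\nt}(a)$ are built from $Z^\s_{k,\nt}(a)=F_{k+1}\circ\cdots\circ F_{\nt}(e^{ia+\s})$, which depends on the \emph{terminal} index $\nt$. When $\nt$ changes to $\ns$, every term with $k\le\ns$ changes as well, so the difference is not a partial sum of martingale increments and your $C(t-s)^2$ moment bound does not follow. The backwards-martingale structure that drives the FDD convergence is precisely what fails to give clean time increments.

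The paper's resolution is to switch to the \emph{inverse} map: in logarithmic coordinates it sets $M_k(w)=\tilde\Gamma_k(w)-w+kc$ with $\tilde\Gamma_k=\tilde G_k\circ\cdots\circ\tilde G_1$, which is a genuine \emph{forward} martingale in $k$ with respect to $\sigma(\Theta_1,\ldots,\Theta_k)$. On the good event one has $\cF^\s_n(t,e^{i\a})=-\frac{1}{\sqrt c}M_{\nt}(w)$ for a suitable $w$ with $\Re(w)\approx\s+\nt c$; the catch is that $w$ moves with $t$. The paper then proves (Lemma~\ref{Lnuovo}) an eighth-moment bound
\[
\bbE\big(|M_{\nt\wedge\mathcal T}(w)-M_{\ns\wedge\mathcal T}(w')|^8\big)\le C(\s,T)\,c^4\,|(t,w)-(s,w')|^4
\]
by splitting into a time piece $M_{\nt}(w)-M_{\ns}(w)$ (now an honest martingale sum) and a space piece $M_{\ns}(w)-M_{\ns}(w')$ (controlled by the derivative bound in \eqref{log_bounds} and Gr\"onwall), and applies a three-dimensional Kolmogorov continuity theorem to get the modulus-of-continuity estimate \eqref{big_tight}. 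This inverse-map trick is the key idea your proposal is missing; your Cauchy-estimate route to spatial regularity is a reasonable alternative for the space variable alone, but it does not repair the temporal argument.
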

Note that the limiting process  is rotationally invariant in the spatial coordinate, as one expects from the rotation invariance of the original model. 
The rest of this section is devoted to the proof of the above result. \\

Fix any $\s >0$. It is trivial to check that $\cF_n^\s$ belongs to $D[0,\infty )$ for all $n\geq 0$. 
Since $D[0,\infty )$ equipped with the Skorokhod metric is a complete separable space, 
 it follows by Prohorov's theorem that $\cF^\s_n \to \cF^\s$ weakly if and only if the finite dimensional distributions (FDDs) of $\cF^\s_n$ converge to the ones of $\cF^\s$, and $(\cF_n^\s)_{n\geq 0}$ is tight (see \cite{ethier2009markov}, Lemma 4.3).

\subsection{Convergence of finite--dimensional distributions} \label{Sfdd}
The following result is a direct consequence of the discussion in \cite{iglehart1968weak}.
\begin{Lemma}[\cite{iglehart1968weak}] \label{L_FDD}
Assume that:
\begin{itemize}
\item[(i)] for any $t>0$ the family of probability measures of $(\cF^\s_n (t, \cdot ))_{n\geq 0}$ on $C(\bbT)$ is tight, and 
\item[(ii)] for any $0\leq t_1 < \ldots < t_M$ and any $-\pi \leq a_1 < \ldots < a_M <\pi $, $M \in \bbZ_+$,  it holds 
	\[ \left( \begin{matrix}
	\cF^\s_n (t_1, e^{ia_1} ) & \ldots & \cF^\s_n (t_M, e^{ia_1} ) \\
	\vdots &   & \vdots \\
	\cF^\s_n (t_1 , e^{ia_M} ) & \ldots & \cF^\s_n (t_M, e^{ia_M} ) 
	\end{matrix} \right) \longrightarrow
	\left( \begin{matrix}
	\cF^\s (t_1, e^{ia_1} ) & \ldots & \cF^\s (t_M, e^{ia_1} ) \\
	\vdots &   & \vdots \\
	\cF^\s (t_1, e^{ia_M} ) & \ldots & \cF^\s (t_M, e^{ia_M} ) 
	\end{matrix} \right)
	\] 
in distribution as $n\to\infty$.
\end{itemize}
Then the FDDs of $\cF^\s_n $ converge to the ones of $ \cF^\s$ as $n\to\infty$. 
More precisely, for any $M \in \bbN_+$ and any $0 \leq t_1 < \ldots < t_M$, the random vector $(\cF_n^\s (t_1 , \cdot ) , \ldots , \cF_n^\s (t_M , \cdot ) )$ in $C(\bbT)^M$ converges in distribution to $(\cF^\s (t_1 , \cdot ) , \ldots , \cF^\s (t_M , \cdot ) )$. 
\end{Lemma}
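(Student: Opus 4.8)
The plan is to deduce the claimed convergence of finite--dimensional distributions by a routine application of Prohorov's theorem, carried out in the Polish space $C(\bbT)^M$ rather than in $C(\bbT)$ itself. Fix $M\in\bbN_+$ and times $0\leq t_1 < \cdots < t_M$, and set $\Xi_n := \big( \cF^\s_n (t_1,\cdot ), \ldots , \cF^\s_n(t_M,\cdot )\big)$ and $\Xi := \big( \cF^\s(t_1,\cdot ), \ldots , \cF^\s(t_M,\cdot )\big)$, regarded as random elements of $C(\bbT)^M$. I want to show $\Xi_n \to \Xi$ in distribution; since the asserted convergence of FDDs is precisely this statement (the vector $(\cF_n^\s(t_1,\cdot ),\ldots,\cF_n^\s(t_M,\cdot))$ being the time--projection of the process $\cF_n^\s$ at $t_1,\ldots,t_M$), this will finish the lemma. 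As $C(\bbT)^M$ with the product topology is a complete separable metric space, by Prohorov's theorem it suffices to check (a) that the laws of $\Xi_n$ form a tight family, and (b) that every weak subsequential limit of $(\Xi_n)$ has the same law as $\Xi$.

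For (a), I would use that tightness on a finite product reduces to tightness of the marginals. Given $\e>0$, assumption (i) provides for each $j\leq M$ a compact set $K_j \subset C(\bbT)$ with $\sup_n \bbP\big( \cF^\s_n(t_j,\cdot )\notin K_j\big) \leq \e/M$. Then $K_1 \times \cdots \times K_M$ is compact in $C(\bbT)^M$ and $\sup_n \bbP\big( \Xi_n \notin K_1\times\cdots\times K_M\big)\leq \e$, so $(\Xi_n)$ is tight.

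For (b), tightness together with Prohorov makes $(\Xi_n)$ relatively compact, so along any subsequence there is a further subsequence converging weakly to some $C(\bbT)^M$-valued $\cG = (\cG_1,\ldots,\cG_M)$. For any finite family of angles $-\pi \leq a_1 < \cdots < a_M < \pi$, the evaluation map $(x_1,\ldots,x_M)\mapsto \big( x_j(e^{ia_i})\big)_{i,j}$ is continuous from $C(\bbT)^M$ to $\bbC^{M^2}$; hence by the continuous mapping theorem along that subsequence, combined with assumption (ii), the law of $\big( \cG_j(e^{ia_i})\big)_{i,j}$ coincides with that of $\big( \cF^\s(t_j,e^{ia_i})\big)_{i,j}$. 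Since $C(\bbT)$ is separable, its Borel $\sigma$-algebra is generated by the coordinate evaluations at a countable dense subset of $\bbT$, and the same is true of the finite product $C(\bbT)^M$; therefore the finite-dimensional marginals just matched determine the law of $\cG$, which forces $\cG \stackrel{(d)}{=} \Xi$. As all subsequential limits agree, $\Xi_n \to \Xi$ in distribution, which is the desired convergence of FDDs (and, incidentally, exhibits the $C(\bbT)^M$-valued law of $\Xi$ as the genuine weak limit, not merely a Kolmogorov-consistent family).

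I expect the only genuinely delicate point to be the measure-theoretic fact invoked at the end of step (b): that the law of a $C(\bbT)$-valued random element is determined by its evaluations at countably many dense points, i.e. that the cylinder $\sigma$-algebra coincides with the Borel $\sigma$-algebra on $C(\bbT)$. This rests on the separability of $C(\bbT)$ and should be stated explicitly rather than taken for granted. Everything else is the standard ``tightness plus convergence of finite-dimensional laws implies weak convergence'' scheme; the only twist is that it must be run in the product space $C(\bbT)^M$, which is why assumption (i) is needed at each of the finitely many times $t_1,\ldots,t_M$ separately and no joint-in-time tightness is required at this stage.
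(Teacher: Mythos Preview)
Your argument is correct and is precisely the standard ``tightness plus identification of limit points'' scheme that underlies the result cited. The paper does not actually give a proof of this lemma; it simply attributes it to \cite{iglehart1968weak} with the remark that it is ``a direct consequence of the discussion'' there. So there is nothing to compare against, and your write-up supplies what the paper omits.

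One minor point worth tightening: in step (b) you evaluate at $M$ angles $a_1<\cdots<a_M$, reusing the same letter $M$ as the (fixed) number of times. To determine the law of a $C(\bbT)^M$-valued random element you need the joint law of evaluations at \emph{arbitrary} finite collections of angles, not just $M$ of them. This is easily handled by a padding argument: given $N>M$ angles, enlarge the time set to $N$ distinct times containing $t_1,\ldots,t_M$, apply assumption (ii) at level $N$, and project back onto the $M$ coordinates of interest. It would be cleaner to state this explicitly rather than silently identify the two uses of $M$.
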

We start by showing that (ii) holds.  
To this end, let us again reduce to the scalar case by considering linear combinations. Recall the definition of the random variables $X_{k,n}^\s ( \, \cdot \, ) $, $Y_{k,n}^\s  ( \, \cdot \, ) $ given in \eqref{XY}. For $(\a_{lj})_{1\leq l,j\leq M}$ in $\bbR^{M\times M}$, we look at the weak limit of 
	\[ \sum_{l,j} \a_{lj} \cF_n^\s (t_l , e^{ia_j} ) =
	\sum_{k=1}^{\lfloor nt_M \rfloor } 
	\sum_{l,j} \a_{lj} \Big[ X_{k,\lfloor nt_l \rfloor }^\s (a_j) + iY_{k,\lfloor nt_l \rfloor}^\s (a_j) \Big]  \mathbf{1} (k\leq \lfloor nt_i \rfloor  ) 
	= \sum_{k=1}^{\lfloor nt_M \rfloor }  \mathscr{X}^\s_{k, \lfloor nt_M \rfloor },\]
where we have set 
	\begin{equation}\label{new_four}
	  \mathscr{X}^\s_{k ,\lfloor nt_M \rfloor }  = 
	\sum_{l,j} \a_{lj} \Big[ X_{k,\lfloor nt_l \rfloor }^\s (a_j) + iY_{k,\lfloor nt_l \rfloor}^\s (a_j) \Big]  \mathbf{1} (k\leq \lfloor nt_l \rfloor  )  
	\end{equation}
for $k \leq \lfloor nt_M \rfloor $. Then $\big( \mathscr{X}^\s_{k ,\lfloor nt_M \rfloor } \big)_{k\leq \lfloor nt_M \rfloor }$ is a backwards martingale difference array with respect to the filtration $(\mathscr{F}_{k,\lfloor nt_M \rfloor})_{k\leq \lfloor nt_M \rfloor }$. 
We can therefore apply Theorem \ref{ML_CLT} to show weak convergence, provided  that Assumptions (I) and (II) are satisfied. 
Note that all terms in the r.h.s. of \eqref{new_four} satisfy the estimates of Lemmas \ref{badE} and \ref{LEgoodE}, from which one can easily show, reasoning as in Lemma \ref{le3}, that Assumption (I) holds. Furthermore,  Lemma \ref{replace} is still in force, from which we conclude that, provided $\s \gg \sqrt{\d}$, the limiting variance is given by the limit in probability of $\ds \sum_{k=1}^{\lfloor nt_M \rfloor} \bbE \big( (  \mathscr{X}^\s_{k ,\lfloor nt_M \rfloor } )^2 | \mathscr{F}_{k+1,\lfloor nt_M \rfloor} \big)$. We now focus on the computation of this limit.  Expand the square and use linearity to see that the above sum equals 
	\[ \begin{split} 
	\sum_{l,j} \sum_{r,s} \a_{lj}\a_{rs} \sum_{k=1}^{\lfloor n t_l \rfloor \wedge
	\lfloor n t_r \rfloor} \bbE \bigg[ 
	\Big( X_{k,\lfloor nt_l\rfloor }^\s (a_j) + iY_{k,\lfloor nt_l \rfloor}^\s (a_j) \Big) 
	\Big( X_{k,\lfloor nt_r\rfloor }^\s (a_s) + iY_{k,\lfloor nt_r \rfloor}^\s (a_s) \Big) 
	\bigg| \mathscr{F}_{k, \lfloor n t_l \rfloor \vee \lfloor n t_r \rfloor} \bigg]
	\end{split}\]
It follows that it suffices to compute the limit in probability of 
	\[ \sum_{k=1}^\ns \bbE \Big[ 
	\Big( X_{k,\nt }^\s (a) + iY_{k,\nt}^\s (a) \Big) 
	\Big( X_{k,\ns }^\s (b) + iY_{k,\ns}^\s (b) \Big) 
	\Big| \mathscr{F}_{k, \nt } \Big] \]
for arbitrary $a,b \in [-\pi , \pi )$ and $0\leq s\leq t$. Moreover, by rotational invariance we can set $b=0$ without loss of generality.
The following result simplifies the computation.
\begin{Proposition}
Almost surely, it holds
	\[ \begin{split} 
	& \bbE (  X^\s_{k,\nt} (a) X^\s_{k,\ns}(0) | \mathscr{F}_{k+1,\nt} ) = 
	\bbE (  Y^\s_{k,\nt} (a)Y^\s_{k,\ns}(0) | \mathscr{F}_{k+1,\nt} ) \\ & 
	\bbE (  X^\s_{k,\nt} (a) Y^\s_{k,\ns}(0) | \mathscr{F}_{k+1,\nt} )  = 
	- \bbE (  Y^\s_{k,\ns} (0)X^\s_{k,\nt}(a) | \mathscr{F}_{k+1,\nt} ) 
	\end{split} \]
for all $k\leq \ns$.
\end{Proposition}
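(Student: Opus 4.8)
The plan is to mimic the argument of Proposition~\ref{simpler}, which handled the diagonal case $s=t$, $a=b=0$, using a holomorphic-function trick together with Cauchy's integral formula. First I would fix $k \leq \ns$ and condition on $\mathscr{F}_{k+1,\nt}$. The two random points $Z^\s_{k,\nt}(a) = F_{k+1}\circ \cdots \circ F_\nt(e^{ia+\s})$ and $Z^\s_{k,\ns}(0) = F_{k+1}\circ \cdots \circ F_\ns(e^{\s})$ are \emph{both} $\mathscr{F}_{k+1,\nt}$-measurable (note $\ns \leq \nt$, so $Z^\s_{k,\ns}(0)$ depends only on $\Theta_{k+1},\dots,\Theta_\ns$, a subset). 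Recall the holomorphic function $f(z) = \frac{1}{\sqrt c}\big(\log\frac{F(z)}{z} - c\big)$ on $\{|z|>1\}$ from the proof of Proposition~\ref{simpler}, and note that
\[
X^\s_{k,\nt}(a) + i Y^\s_{k,\nt}(a) = f(e^{-i\Theta_k} Z^\s_{k,\nt}(a)), \qquad
X^\s_{k,\ns}(0) + i Y^\s_{k,\ns}(0) = f(e^{-i\Theta_k} Z^\s_{k,\ns}(0)).
\]
Multiplying these two identities and separating real and imaginary parts, the product $\big(X^\s_{k,\nt}(a)+iY^\s_{k,\nt}(a)\big)\big(X^\s_{k,\ns}(0)+iY^\s_{k,\ns}(0)\big)$ expands into exactly the four conditional-expectation quantities appearing in the statement (its real part is $X^\s_{k,\nt}(a)X^\s_{k,\ns}(0) - Y^\s_{k,\nt}(a)Y^\s_{k,\ns}(0)$ and its imaginary part is $X^\s_{k,\nt}(a)Y^\s_{k,\ns}(0) + Y^\s_{k,\nt}(a)X^\s_{k,\ns}(0)$).

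Next I would take $\bbE(\,\cdot\mid\mathscr{F}_{k+1,\nt})$ of the product. Only the factor depending on $\Theta_k$ — which is uniform on $[-\pi,\pi)$ and independent of $\mathscr{F}_{k+1,\nt}$ — gets averaged, so
\[
\bbE\Big[ f(e^{-i\Theta_k} Z^\s_{k,\nt}(a))\, f(e^{-i\Theta_k} Z^\s_{k,\ns}(0)) \,\Big|\, \mathscr{F}_{k+1,\nt}\Big]
= \frac{1}{2\pi}\int_{-\pi}^{\pi} f(e^{-i\th} Z^\s_{k,\nt}(a))\, f(e^{-i\th} Z^\s_{k,\ns}(0))\,\dd\th.
\]
This is a Cauchy-type integral: substituting $z = e^{i\th}$ it becomes $\frac{1}{2\pi i}\int_{|z|=1} f(Z^\s_{k,\nt}(a)/z)\, f(Z^\s_{k,\ns}(0)/z)\,\frac{\dd z}{z}$. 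The integrand, as a function of $z$ in $\{|z|<1\}$, is holomorphic there: indeed $|Z^\s_{k,\nt}(a)| , |Z^\s_{k,\ns}(0)| > e^\s > 1$ (by (P1)), so $Z^\s_{k,\nt}(a)/z$ and $Z^\s_{k,\ns}(0)/z$ stay in $\{|w|>1\}$ for all $0<|z|<1$, where $f$ is holomorphic; and at $z=0$ one has $f(w)\to 0$ as $|w|\to\infty$ (since $\log\frac{F(w)}{w}\to c$), so the apparent pole at $z=0$ from the $\frac{1}{z}$ factor is removable and the integrand extends holomorphically to all of $\{|z|<1\}$. By Cauchy's theorem the integral equals the value of $z\mapsto f(Z^\s_{k,\nt}(a)/z)f(Z^\s_{k,\ns}(0)/z)$ at $z=0$, which is $0$. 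Hence $\bbE[\,(X^\s_{k,\nt}(a)+iY^\s_{k,\nt}(a))(X^\s_{k,\ns}(0)+iY^\s_{k,\ns}(0))\mid\mathscr{F}_{k+1,\nt}]=0$ almost surely. Equating the real part to zero gives the first claimed identity, and equating the imaginary part to zero gives $\bbE(X^\s_{k,\nt}(a)Y^\s_{k,\ns}(0)\mid\mathscr{F}_{k+1,\nt}) = -\bbE(Y^\s_{k,\nt}(a)X^\s_{k,\ns}(0)\mid\mathscr{F}_{k+1,\nt})$, which is the second.

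The only real point requiring care — and the step I expect to be the mild obstacle — is the measurability bookkeeping: one must check that $Z^\s_{k,\ns}(0)$ is $\mathscr{F}_{k+1,\nt}$-measurable (true since $\ns\le\nt$ forces $\sigma(\Theta_{k+1},\dots,\Theta_\ns)\subseteq\mathscr{F}_{k+1,\nt}$) so that it can be pulled out of the conditional expectation and treated as a constant when averaging over $\Theta_k$, and that the single factor $e^{-i\Theta_k}$ appearing in \emph{both} arguments is what makes the Cauchy-integral collapse work — the integrand is a function of the one variable $z=e^{i\th}$. Everything else is a verbatim repeat of the contour argument in Proposition~\ref{simpler}. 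I would also remark that nothing here uses $s<t$ or $a\neq b$, so the identities hold for all $0\le s\le t$ and all $a,b$; the case $s=t$, $a=b$ recovers Proposition~\ref{simpler}.
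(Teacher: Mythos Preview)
Your argument is correct and follows essentially the same route as the paper: the paper's one-line proof says to repeat the Cauchy-integral argument of Proposition~\ref{simpler} with $\big[f(e^{-i\Theta_k}Z^\s_{k,\nt}(a))+f(e^{-i\Theta_k}Z^\s_{k,\ns}(0))\big]^2$ in place of $\big[f(e^{-i\Theta_k}Z_{k,\nt})\big]^2$, then subtract the diagonal contributions (which vanish by Proposition~\ref{simpler}) to isolate the cross term---you instead apply the contour argument directly to the product $f(e^{-i\Theta_k}Z^\s_{k,\nt}(a))\,f(e^{-i\Theta_k}Z^\s_{k,\ns}(0))$, which is slightly cleaner but the same idea. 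Note that the second identity you derive, $\bbE(X^\s_{k,\nt}(a)Y^\s_{k,\ns}(0)\mid\mathscr{F}_{k+1,\nt})=-\bbE(Y^\s_{k,\nt}(a)X^\s_{k,\ns}(0)\mid\mathscr{F}_{k+1,\nt})$, is the intended statement (matching the covariance structure in Theorem~\ref{Tfields}); the paper's printed version appears to contain a typo in the subscripts on the right-hand side.
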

\begin{proof}
The result follows by the same arguments used in the proof of Proposition \ref{simpler}, considering now  $ \big[ f(e^{-i\Theta_k} Z^\s_{k,\nt}(a) ) + f(e^{-i\Theta_k} Z^\s_{k,\ns}(0)  ) \big]^2 $ in place of $\big[ f(e^{-i\Theta_k} Z_{k,\nt} ) \big]^2 $.
\end{proof}
It remains to compute the limit in probability of $ \sum_k\bbE (X^\s_{k,\nt}(a) X^\s_{k,\ns}(0) | \mathscr{F}_{k+1,\nt})$ and\\ $\sum_k \bbE (X^\s_{k,\nt}(a) Y^\s_{k,\ns}(0) | \mathscr{F}_{k+1,\nt})$. As in the previous section, we do this by approximating by a deterministic quantity. 

\begin{Lemma}\label{Llong} 
Assume $\s \gg \e$. Then there exists a constant $C(t)$, depending only on $t$ ($t>s$), such that on the event $E(m,\e )$ the following hold:
\[ \begin{split}
	 &\bigg| \bbE(X^\s_{k,\nt}(a)X^\s_{k,\ns}(0)| \mathscr{F}_{k+1,\nt})  + c 
	 - \frac{c}{2\pi } \int_{-\pi}^\pi \!\!
	P_{e^{-\s - (\nt -k)c}}(a-\th) P_{e^{-\s - (\ns -k)c}}(\th) \dd \th  \bigg| \leq 
	\frac{C(t) c\e}{(\s \! +\! (\ns -k)c)^3} \,  \\ & 
	\bigg| \bbE(X^\s_{k,\nt}(a)Y^\s_{k,\ns}(0)| \mathscr{F}_{k+1,\nt })  + c 
	- \frac{c}{2\pi } \int_{-\pi}^\pi \!\!
	P_{e^{-\s - (\nt -k)c}}(a-\th) Q_{e^{-\s - (\ns -k)c}}(\th) \dd \th  \bigg| \leq 
	\frac{C(t) c\e}{(\s \! +\! (\ns -k)c)^3} 
	\end{split} \]
for all $k\leq \ns$ and $n$ large enough. Above $Q_r(\th) = \mathrm{Im} \big( \frac{1+re^{i\th}}{1-re^{i\th}} \big)$, $r<1$, denotes the conjugate Poisson kernel.
\end{Lemma}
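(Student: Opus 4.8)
The plan is to follow the same strategy used in the proof of Lemma \ref{LErep} (and its promised generalisation), namely to Taylor-expand the relevant conditional expectations around the deterministic approximation of $Z^\s_{k,\nt}$ valid on the good event $E(m,\e)$. First I would write, using the holomorphic function $f(z) = \frac{1}{\sqrt{c}}\big(\log\frac{F(z)}{z} - c\big)$ from the proof of Proposition \ref{simpler}, the identity
\[ \bbE\big(X^\s_{k,\nt}(a)\, X^\s_{k,\ns}(0)\mid \mathscr{F}_{k+1,\nt}\big)
= \frac{1}{2\pi}\int_{-\pi}^{\pi} \Re\, f\Big(\tfrac{Z^\s_{k,\nt}(a)}{e^{i\th}}\Big)\, \Re\, f\Big(\tfrac{Z^\s_{k,\ns}(0)}{e^{i\th}}\Big)\, \dd\th \]
(and the analogous formula with $\Im$ in the second factor for the $XY$ term), which is just the statement that conditional expectation is the average over $\Theta_k$ uniform. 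Then, by Corollary \ref{Fcor}, $f(z) = \frac{1}{\sqrt{c}}\big( c\frac{z+1}{z-1} - c + O(c^{3/2}|z|^2/(|z|-1)^3)\big) = \sqrt{c}\,\frac{2}{z-1} + \text{error}$, so $\Re f(Z/e^{i\th}) = \sqrt{c}\,P_{1/|Z|}(\arg Z - \th) \cdot(1+o(1))$ up to the stated error, after using $\Re\frac{2}{z-1} = \Re\frac{1+1/z}{1-1/z}$ for $|z|>1$, i.e. the Poisson kernel evaluated at radius $1/|z|$. Inserting this into the integral and multiplying out produces the main term $\frac{c}{2\pi}\int P_{e^{-\s-(\nt-k)c}}(a-\th)P_{e^{-\s-(\ns-k)c}}(\th)\,\dd\th$ once $Z^\s_{k,\nt}(a)$ and $Z^\s_{k,\ns}(0)$ are replaced by their deterministic values $e^{ia+\s+(\nt-k)c}$ and $e^{\s+(\ns-k)c}$.

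Next I would control the two sources of error. The first is the $O(c^{3/2})$ term in Corollary \ref{Fcor}: when cross-multiplied against the bounded-by-$C\sqrt{c}/\s$ leading term (Lemma \ref{LEgoodE}) and integrated, it contributes at most $C(t)c^{3/2}|Z|^2/(|Z|-1)^3 \cdot \sqrt c/\s \lesssim c^2/(\s+(\ns-k)c)^4$, which is smaller than the claimed $C(t)c\e/(\s+(\ns-k)c)^3$ since $\e\gg c$ — actually $\e \asymp \d^{2/3}\log(1/\d) \gg \d^{4/3} \asymp c^{2/3}$. The additional $-c$ on the left-hand side comes from the $\Re f = \frac{1}{\sqrt c}(\log|F/ \cdot| - c)$ normalisation producing a $\frac{1}{2\pi}\int (\text{const})$ piece; more precisely, writing $f = g - \sqrt c$ with $g(z) = \frac{1}{\sqrt c}\log\frac{F(z)}{z}$, expanding $\Re f\,\Re f = \Re g\,\Re g - \sqrt c(\Re g + \Re \bar g) + c$ and using $\frac{1}{2\pi}\int \Re g(Z/e^{i\th})\dd\th = \sqrt c$ (mean-value property, since $\log\frac{F(z)}{z}\to c$) isolates the $+c$ correction on the left and leaves the $\Re g \cdot \Re g$ integral, which is the Poisson-squared term. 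This bookkeeping is identical to that in Lemma \ref{LErep}, just with two different radii.

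The second, and I expect more delicate, source of error is replacing $Z^\s_{k,\nt}(a)$ by its deterministic counterpart inside the Poisson kernels. On $E(m,\e)$ we have $|Z^\s_{k,\nt}(a) - e^{ia+\s+(\nt-k)c}| \le C(t)\e$, and since $\s\gg\e$ the points stay a definite distance $\gtrsim \s + (\nt-k)c$ from the unit circle. The Poisson kernel $P_r(\psi)$ and its conjugate $Q_r(\psi)$ are smooth in $(r,\psi)$ on $\{r \le e^{-\s'}\}$ with derivatives bounded by $C/(1-r)^2 \asymp 1/(\s+(\nt-k)c)^2$; combining this Lipschitz bound with the $C(t)\e$ displacement, and with the $\frac{1}{2\pi}\int P_{e^{-\s-(\ns-k)c}}(\th)\dd\th = 1$ normalisation of the other factor, gives an error of order $c\cdot \e/(\s+(\ns-k)c)^2 \cdot 1/(\ldots)$; being slightly careful about which of the two "distances to the circle" dominates (the smaller index $\ns-k$ gives the larger kernel, hence the denominator $(\s+(\ns-k)c)^3$ in the statement) yields exactly the claimed bound. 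The conjugate kernel $Q_r$ satisfies the same estimates since it is the harmonic conjugate, so the $XY$ case is handled in parallel, using $\Im\frac{2}{z-1} = \Im\frac{1+1/z}{1-1/z} = -Q_{1/|z|}(\arg z)$ (the sign and the precise normalisation of $Q$ must be tracked, but this is routine). The main obstacle is thus the careful choice of which scale controls each error term so that all bounds collapse to the single clean expression $C(t)c\e/(\s+(\ns-k)c)^3$; once that is set up, everything reduces to the distortion estimate of Corollary \ref{Fcor} plus elementary Poisson-kernel calculus.
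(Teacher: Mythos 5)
Your proposal follows essentially the same route the paper takes: write the conditional expectation as an average over $\Theta_k$, use Corollary \ref{Fcor} to replace $\log\frac{F(z)}{z}$ by the Poisson-kernel expression $c\frac{z+1}{z-1}$, then use the good-event estimate $|Z^\s_{k,\cdot}(\cdot)-W|\leq C(t)\e$ together with a Lipschitz bound for $z\mapsto\frac{z+1}{z-1}$ to swap $Z$ for its deterministic approximation, and finally combine the two replacements via a triangle/product inequality. The paper packages this with $g_\th(z)=\Re\log\frac{F(e^{-i\th}z)}{e^{-i\th}z}$ and $h_\th(z)=c\,\Re\frac{e^{-i\th}z+1}{e^{-i\th}z-1}$ and the algebraic identity $|g(Z_1)g(Z_2)-h(W_1)h(W_2)|\leq|g(Z_2)||g(Z_1)-h(W_1)|+|h(W_1)||g(Z_2)-h(W_2)|$, which is a cleaner bookkeeping device than your expansion of $\Re f\cdot\Re f$ in terms of $g$, but the content is the same. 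Two small inaccuracies in your write-up that you should tighten before calling it a proof: (i) you write $\Re\frac{2}{z-1}=\Re\frac{1+1/z}{1-1/z}$, but actually $\frac{2}{z-1}=\frac{z+1}{z-1}-1$, so $\Re\frac{2}{z-1}=P_{1/|z|}(\arg z)-1$; the ``$-1$'' is exactly what, after squaring and integrating, produces the $+c$ on the left of the display (you note this a moment later, and your $\Re g+\Re\bar g$ should read $\Re g_1+\Re g_2$). (ii) In estimating the $\mathcal{O}(c^{3/2})$ contribution you should compare the error in $f$, not in $\log\frac{F}{z}$, against the other factor; this gives $c^{3/2}/(\s+(\ns-k)c)^4$ rather than $c^2/(\cdot)^4$. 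This term is still dominated by the claimed $c\e/(\s+(\ns-k)c)^3$ in the regimes where the lemma is applied (fixed $\s$, or $\s\gg\sqrt{\e}$), though — as in the paper's own proof — the bare assumption $\s\gg\e$ alone is not quite enough to absorb it; this is a pre-existing subtlety and not a flaw introduced by you.
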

We discuss the proof of Lemma \ref{Llong}, from which Lemma \ref{LErep} also follows by setting $a=0$ and $s=t$, in Appendix \ref{Alemma}.
As a consequence of the above result we obtain that, after further assuming $\s \gg \sqrt{\e}$, on the event $E(m,\e)$ it holds:
	\[ \begin{split}
	 \bigg| \sum_{k=1}^\ns \bbE(X^\s_{k,\nt}(a) & X^\s_{k,\ns}(0)| \mathscr{F}_{k+1,\nt})  + \ns c  - \frac{c}{2\pi } \sum_{k=1}^\ns \int_{-\pi}^\pi 
	P_{e^{-\s - (\nt -k)c}}(a-\th) P_{e^{-\s - (\ns -k)c}}(\th) \dd \th  \bigg| \leq \\ & 
	\leq  \sum_{k=1}^\ns \frac{C(t) c\e }{(\s + (\ns -k)c)^3}
	 \leq C(t) \e \int_\s^{\s +\ns c} \frac{\dd x }{x^3} 
	=   C(t) \e \bigg( \frac{1}{2\s^2} - \frac{1}{2(\s+\ns c)^2} \bigg) \to 0 
	\end{split} \]
as $n \to \infty$. This in turn implies  that 
	\[ \begin{split} 
	\lim_{n\to \infty } &\sum_{k=1}^\ns \bbE(X^\s_{k,\nt }(a)  X^\s_{k,\ns }(0)| \mathscr{F}_{k+1,\nt}) 
	= \lim_{n\to\infty} \bigg( \frac{c}{2\pi } \sum_{k=1}^\ns \int_{-\pi}^\pi 
	P_{e^{-\s - (\nt -k)c}}(a-\th) P_{e^{-\s - (\ns -k)c}}(\th) \dd \th - \ns c \bigg) \\ 
	& = \frac{1}{2\pi} \int_\s^{\s +s} \!\!\int_{-\pi}^\pi P_{e^{-(t-s) -x}}(a-\th) P_{e^{-x}}(\th )\dd \th -s 
	= 	\int_{\s + \frac{t-s}{2}}^{\s+\frac{t+s}{2} } P_{e^{-2x}} (a)\dd x -s =  
	\log \bigg| \frac{ 1-e^{-2\s -(t+s) +ia } }{ 1 - e^{-2\s -(t-s) +ia}} \bigg| \, , 
	\end{split} \]
as claimed. Similarly, 
	\[ \begin{split} 
	\lim_{n\to \infty } \sum_{k=1}^\ns \bbE(X^\s_{k,\nt}(a) Y^\s_{k,\ns}(0)|  
	\mathscr{F}_{k+1,\nt}) & 
	= \frac{1}{2\pi} \int_\s^{\s +s} \!\!\int_{-\pi}^\pi P_{e^{-(t-s) - x}}(a-\th) Q_{e^{-x}}(\th )\dd \th 
	\\ & = 		\int_{\s + \frac{t-s}{2}}^{\s+\frac{t+s}{2} }  Q_{e^{-2x}} (a)\dd x  = 
	\textrm{Arg} \bigg( \frac{ 1-e^{-2\s -(t+s) +ia } }{ 1 - e^{-2\s -(t-s) +ia}} \bigg) \, . 
	\end{split} \]
This concludes the proof of (ii) in Lemma \ref{L_FDD}. It remains to show that (i) holds. 

Pick any $t>0$. Suppose we could show that:
\begin{itemize}
\item[(a)] for all $\nu >0$, there exists $M_\nu >0$ and $n_0 \in \bbN_+$ such that 
$\ds  \sup_{n\geq n_0} \bbP \big( \| \cF^\s_n (0, \cdot ) \|_\infty >M_\nu \big) \leq \nu $, and
\item[(b)] for all $\nu >0$ it holds 
	\begin{equation}\label{tight}
	\lim_{\eta \to 0} \limsup_{n\to\infty} \bbP \bigg(
	\sup_{|a-b|<\eta} | \cF_n^\s (t , e^{ia}) - \cF_n^\s (t, e^{ib}) | \geq \nu \bigg) =0 \, . 
	\end{equation}	
\end{itemize}
Then it would follow from Theorem 7.5 in \cite{billingsley1999convergence} that the sequence of probability measures on $C(\bbT)$ associated to $(\cF^\s_n (t , \cdot ) )_{n\geq 0}$ is tight.

Since $\cF^\s_n (0, \cdot ) \equiv 0$, (a) is trivially satisfied. The fact that also (b) holds follows by the same reasoning as in Section \ref{Stight} below, and the proof is therefore omitted.
	
\subsection{Tightness} \label{Stight}
To conclude the proof of Theorem \ref{Tfields} it remains to show that the family of probability measures associated to $(\cF^\s_n )_{n\geq 0}$ on $D[0,\infty )$ is tight.
Corollary 7.4 in \cite{ethier2009markov}  
provides a characterization for tightness in this space. In particular, it tells us that if:
\begin{itemize}
\item[(a')]  for all $\nu >0$ and all $ t\in [0,\infty ) \cap \bbQ$, there exists a positive constant $M = M(\nu, t) $  such that 
	\[ \limsup_{n\to\infty} \bbP ( \| \cF^\s_n(t , \cdot ) \|_\infty > M ) <\nu\, , \]
and
\item[(b')] for all $\nu,T>0$ there exists a positive constant $\eta = \eta (\nu , T)$ such that 
	\[ \limsup_{n\to\infty} \bbP (\omega  (\cF^\s_n , \eta , T) \geq \nu ) \leq \nu\, , \]	
\end{itemize}
where $\omega $ denotes the 
modulus of continuity on $D[0,\infty )$, 
then the desired tightness follows. We start by showing (b'). 
 It is clear that it suffices to restrict to the event $E(m,\e )$. We are set to show that for any $\nu , T>0$ it holds 
	\begin{equation} \label{big_tight}
	\limsup_{n\to \infty} \bbP \Bigg( 	 \sup_{\substack{s,t \in [0,T],|t-s|<\eta \\ 
	\a \in [-\pi , \pi )}} | \cF_n^\s (t,e^{i\a} ) - \cF_n^\s (s,e^{i\a} ) | \geq \nu \, ; \,  E(m,\e ) \Bigg) \to 0 \qquad \mbox{ as } \eta \to 0\, . 
	\end{equation}
It is convenient to switch to logarithmic coordinates. 
Following the notation introduced in \cite{norris2012hastings}, Section 5, we set $\tilde{D}_0 = \{ w \in \bbC : \mathrm{Re}(w)>0 \} $, $\tilde{D} = \{ w \in \bbC : e^w \in D \}$, and let $\tilde{F} $ be the unique conformal map from $\tilde{D}_0$ to $\tilde{D}$ such that $\tilde{F}(w) = w+c+o(1)$ as $\mathrm{Re}(w)\to\infty$. Moreover, let $\tilde{G} = \tilde{F}^{-1}$, so that $\tilde{G}(w) = w-c+o(1)$ as $\mathrm{Re}(w)\to\infty$. Finally, for all $k\leq n$ set $\tilde{F}_k (w)= \tilde{F}(w-i\Theta_k ) + i\Theta_k $, $\tilde{G}_k = \tilde{F}_k^{-1}$ and $\tilde{\Phi}_k = \tilde{F}_1 \circ \cdots \circ \tilde{F}_k$, $\tilde{\Gamma}_k = \tilde{\Phi}_k^{-1}$. Then $\cF^\s_n (t, \th ) = \frac{1}{\sqrt{c}} \Big( \tilde{\Phi}_{\nt } (i\th + \s ) - i\th - \s -\nt c \Big)$ and, assuming $t\geq s$ without loss of generality, we have:
	\begin{equation} \label{events2}
	\begin{split}
	\bbP \Bigg( & \sup_{\substack{s,t\in[0,T], |t-s|<\eta \\ \a \in [-\pi ,\pi )}}  |  \cF_n^\s (t, e^{i\a} ) -  \cF_n^\s (s,e^{i\a} ) | \geq  \nu \,  ; E(m,\e )\Bigg) = \\
	& = \bbP \Bigg( \sup_{\substack{ |t-s|<\eta \\ z: \Re (z) =\s }} 
	\bigg| \Big( \tilde{\Phi}_{\nt} (z) - z -\nt c \Big) - 
	\Big( \tilde{\Phi}_{\ns} (z ) - z -\ns c \Big) \bigg| \geq \nu \sqrt{c} \, ; \,
	E(m,\e ) \Bigg)  \, . 
	\end{split}
	\end{equation}
Note that on $E(m,\e )$ we have $z = \tilde{\Gamma}_{\nt} (w) = \tilde{\Gamma}_{\ns}(w')$ for some $w\in \tilde{D}_{\nt}$ and $w' \in \tilde{D}_{\ns}$ such that $|\Re (w) - \s - \nt c |< \e $, $|\Re (w') - \s - \ns c |<\e $. Moreover, since $|t-s|<\eta$, it must be   $|w-w'|<\nt - \ns  +4\e < 4\eta$ as long as $n$ is large enough. 
For any $k \geq 0$, if $w \in \tilde{D}_k$ define  $M_k(w) =  \tilde{\Gamma}_k (w ) - w +kc$. Finally, let 
	\[ \mathcal{T} := \inf \big\{ k\geq 0 : \xi \notin \tilde{D}_k \mbox{ for some }\xi \mbox{ such that } \Re (\xi ) = \s + k -\e \big\}, \]
and note that on $E(m,\e)$ we have $\mathcal{T}> \nt $ for all $t\leq T$. 
It follows that  the r.h.s. of  \eqref{events2} is bounded above by
	\[ \begin{split}
	 \bbP \Bigg( \sup_{\substack{ |t-s|<\eta \\ w\, : |\Re (w) -\s - \nt c|<\e \\  w': |\Re (w') -\s - \ns c|<\e \\ |w-w'|<4\eta }} 
	\bigg| \Big(  \tilde{\Gamma}_{\nt} & (w) - w +\nt c  \Big) - 
	\Big( \tilde{\Gamma}_{\ns} (w' ) - w' +\ns c  \Big) \bigg| \geq \nu \sqrt{c} \, ; \, E(m,\e ) \Bigg)  \\
	& \leq 
	\bbP \Bigg( \sup_{\substack{ |t-s|<\eta \\ w\, : |\Re (w) -\s - \nt c|<\e \\ 
	 w': |\Re (w') -\s - \ns c |<\e \\ |w-w'|<4\eta }} 
	 \Big| M_{\nt \wedge \mathcal{T}} (w) - M_{\ns \wedge \mathcal{T}} (w') \Big|
	 \geq \nu \sqrt{c}\Bigg) .
	  \end{split} \]
We control the above probability by mean of a $3$-dimensional version of Kolmogorov's continuity theorem (cf. \cite{durrett1996stochastic}, Theorem 1.6). To this end, we show the following.

\begin{Lemma}\label{Lnuovo}
Fix any $s,t \in [0,T]$ with $s\leq t$, and any $w, w'$ such that $ | \Re (w) -\s - \nt c |<\e$, $|\Re (w') -\s - \ns c |<\e$ and $|w-w'|< 1$. Then there exists a constant $C=C(\s , T)$, depending only on $\s$ and $T$, such that 
	\begin{equation*} 
	 \bbE \bigg( \Big| M_{\nt \wedge \mathcal{T}} (w) - M_{\ns \wedge \mathcal{T}} (w') \Big|^8 \bigg) \leq C(\s , T) c^4 | (t,w) - (s,w') |^4  ,
	 \end{equation*}
where $|(t,w) - (s,w')| $ denotes the Euclidean norm in $\bbR^3$. 
\end{Lemma}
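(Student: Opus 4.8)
\textbf{Proof proposal for Lemma \ref{Lnuovo}.}

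The plan is to estimate the eighth moment of the increment $M_{\nt \wedge \mathcal{T}} (w) - M_{\ns \wedge \mathcal{T}} (w')$ by splitting it into a \emph{time} increment and a \emph{space} increment, and bounding each by a product of $c$-powers and Euclidean distances. Write
\[ M_{\nt \wedge \mathcal{T}} (w) - M_{\ns \wedge \mathcal{T}} (w') = \big( M_{\nt \wedge \mathcal{T}} (w) - M_{\ns \wedge \mathcal{T}} (w) \big) + \big( M_{\ns \wedge \mathcal{T}} (w) - M_{\ns \wedge \mathcal{T}} (w') \big), \]
so that by the elementary inequality $|a+b|^8 \leq 128(|a|^8 + |b|^8)$ it suffices to bound each piece by $C(\s,T) c^4 |t-s|^4$ and $C(\s,T) c^4 |w-w'|^4$ respectively. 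The key structural fact is that, for fixed $w$, the process $k \mapsto M_{k \wedge \mathcal{T}}(w) = \tilde{\Gamma}_{k\wedge\mathcal{T}}(w) - w + (k\wedge\mathcal{T})c$ is a martingale in the \emph{forward} index $k$: indeed $\tilde{\Gamma}_{k} = \tilde{G}_{k} \circ \tilde{\Gamma}_{k-1}$, and conditionally on $\mathscr{G}_{k-1} = \sigma(\Theta_1,\ldots,\Theta_{k-1})$ one has $\bbE(\tilde{G}_{k}(\xi) ) = \xi - c$ for any fixed $\xi$ in the appropriate domain (the logarithmic-coordinate version of the computation $\frac{1}{2\pi}\int \log\frac{F(e^{-i\th}z)}{e^{-i\th}z}\dd\th = c$ used in Section \ref{S3}), and on the event $\{k \leq \mathcal{T}\}$ the point $\tilde{\Gamma}_{k-1}(w)$ lies in the region where this applies. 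The stopping time $\mathcal{T}$ is exactly what guarantees we stay in the good region where the preliminary estimates of Section \ref{S2} (Corollary \ref{Fcor} and its logarithmic-coordinate analogue) are available with bounds controlled purely in terms of $\s$ and $T$.

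For the \textbf{time increment}, I would apply the Burkholder--Davis--Gundy inequality to the martingale $j \mapsto M_{(\ns + j)\wedge\mathcal{T}}(w) - M_{\ns\wedge\mathcal{T}}(w)$ for $j = 0, \ldots, \nt - \ns$, reducing the eighth moment to $\bbE\big( (\sum_j |\Delta_j|^2)^4\big)$ where $\Delta_j$ is the single-step increment. Each single step contributes $\tilde{G}_{k}(\tilde\Gamma_{k-1}(w)) - \tilde\Gamma_{k-1}(w) + c$, which by the logarithmic version of Corollary \ref{Pref}/\ref{Fcor} has magnitude $O(c^{3/2}/\Re(w)^2) = O(c^{3/2})$ uniformly on $E(m,\e)$-type events for points with real part $\gtrsim \s$, hence $|\Delta_j| \leq C(\s,T)c$ deterministically after stopping. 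Summing: $\sum_j |\Delta_j|^2 \leq C(\s,T)^2 c^2 (\nt - \ns) \leq C(\s,T)^2 c^2 (|t-s| n + 1) \asymp C(\s,T)^2 c\,|t-s|$ since $nc \to 1$; raising to the fourth power gives $C(\s,T) c^4 |t-s|^4$ as desired (absorbing the additive $+1$ into constants for $n$ large, using $|t-s| \geq c$ when the increment is nonzero, or handling the small-$|t-s|$ regime by noting the left side vanishes when $\nt = \ns$).

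For the \textbf{space increment} I would instead fix the time index $k = \ns$ and study $w \mapsto M_{k\wedge\mathcal{T}}(w) - M_{k\wedge\mathcal{T}}(w')$; this is again a martingale in the time index with single-step increment $\tilde{G}_{j}(\tilde\Gamma_{j-1}(w)) - \tilde\Gamma_{j-1}(w) - \big(\tilde{G}_{j}(\tilde\Gamma_{j-1}(w')) - \tilde\Gamma_{j-1}(w')\big)$. By the derivative bound in Corollary \ref{Pref} (and the fact that $\tilde\Gamma_{j-1}$ is a conformal map with derivative comparable to $1$ on the good event, by Koebe-type distortion together with Theorem \ref{teoJ}), each such increment is $O\big(c^{3/2} |w - w'|\big)$ uniformly, so again BDG gives $\bbE(\text{eighth moment}) \leq C\big( c^3 |w-w'|^2 \cdot (\nt\wedge\mathcal{T}) \big)^4 \leq C(\s,T) c^4 |w-w'|^4$ using $(\nt)c \lesssim T$. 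The main obstacle, and where care is genuinely needed, is establishing the uniform-in-$\s,T$ control of the conformal distortion $\tilde\Gamma'_{j-1}$ along the whole trajectory up to time $\mathcal{T}$ — i.e. turning the pointwise location estimate ``$\Re$ stays near $\s + kc$'' into a derivative estimate — since all the increment bounds above secretly depend on it; this is done by combining the explicit definition of $\mathcal{T}$ (which keeps trajectories at distance $\gtrsim \s$ from $\partial\tilde{D}_k$) with the a priori estimates of Proposition \ref{teoNT} applied map-by-map, and it is the analogue of the argument that produces the $C(t)$ in Lemma \ref{LEgoodE}. Once Lemma \ref{Lnuovo} is in hand, the $3$-dimensional Kolmogorov continuity criterion (\cite{durrett1996stochastic}, Theorem 1.6), with exponent $8$ against dimension $3$ giving Hölder regularity of order up to $(4-3)/8 = 1/8 > 0$, upgrades the moment bound to the uniform modulus-of-continuity control needed in \eqref{big_tight}.
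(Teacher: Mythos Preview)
Your overall architecture --- split into a time increment and a space increment, use the martingale structure plus a BDG/orthogonality-of-increments bound --- matches the paper exactly. Two points deserve comment.

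First, a recurring slip: the single-step martingale increment is $\tilde G_0(\tilde\Gamma_k(w)-i\Theta_{k+1})+c$, and the relevant bound for \emph{this} quantity is the first estimate in \eqref{tNT}/\eqref{log_bounds}, giving $O(c/\s)$, not $O(c^{3/2})$. The $c^{3/2}$ bound of Corollary~\ref{Pref} controls $\tilde G_0(w)+c\,\frac{z+1}{z-1}$, a different object. Your actual arithmetic then silently reverts to the correct $|\Delta_j|\le C(\s,T)c$, so the time-increment bound comes out right, but the justification you give for it is mismatched. The same confusion recurs in the space-increment step.

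Second, and more substantively, for the space increment you propose to bound each step by $O(c\,|w-w'|)$ via a uniform estimate $|\tilde\Gamma'_{j-1}|=O(1)$, and you correctly flag obtaining that derivative control as ``the main obstacle''. The paper sidesteps this entirely. Writing $\tilde M_k=M_k(w)-M_k(w')$, one has $\tilde\Gamma_k(w)-\tilde\Gamma_k(w')=(w-w')+\tilde M_k$, so the derivative bound in \eqref{log_bounds} gives only
\[
\big|\tilde M_{(k+1)\wedge\mathcal T}-\tilde M_{k\wedge\mathcal T}\big|\le \frac{Cc}{\s^2}\big(|w-w'|+|\tilde M_{k\wedge\mathcal T}|\big),
\]
with the feedback term $|\tilde M_{k\wedge\mathcal T}|$ present. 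The paper then closes by Gr\"onwall on the eighth moment, yielding $\bbE(|\tilde M_{\ns\wedge\mathcal T}|^8)\le C(\s,T)c^4|w-w'|^8$ directly. This is cleaner than your route: it needs no separate distortion estimate for $\tilde\Gamma_k$, and the ``main obstacle'' you identify simply does not arise. Your approach can be made to work (iterating $|\tilde G'_j|\le 1+Cc/\s^2$ gives $|\tilde\Gamma'_k|\le e^{Ckc/\s^2}\le C(\s,T)$), but the Gr\"onwall argument is both shorter and more robust.
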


\begin{proof}
Clearly the l.h.s. of the above inequality  is upper bounded by 
	\begin{equation}\label{newgoal}
	C \bigg[ 
	\bbE \bigg( \Big| M_{\nt \wedge \mathcal{T}} (w) - M_{\ns \wedge \mathcal{T}} (w) \Big|^8 \bigg) + 
	 \bbE \bigg( \Big| M_{\ns \wedge \mathcal{T}} (w) - M_{\ns \wedge \mathcal{T}} (w') \Big|^8 \bigg) \bigg] 
	 \end{equation}
for some absolute constant $C>0$. We control the two terms separately. 

Set $\tilde{G}_0(w) =\tilde{G}(w) - w$, so that $M_{k+1}(w) - M_k(w) = \tilde{G}_0(\tilde{\Gamma}_k (w)-i\Theta_{k+1}) +c$, and 
	\[ \bbE (M_{k+1}(w) - M_k(w)  | \s (\Theta_1 , \ldots , \Theta_k) ) =
	\frac{1}{2\pi} \int_{-\pi}^\pi \tilde{G}_0(\tilde{\Gamma}_k (w) -i\th ) \dd \th  + c
	= 0\]
for all $k\leq \nt \wedge \mathcal{T}$. It follows that $\big( M_{k\wedge \mathcal{T}}(w) \big)_{k\leq \nt}$ is a martingale. Moreover, we deduce from the estimates in \eqref{tNT}, which now read 
	\begin{equation}\label{log_bounds}
	 | \tilde{G}_0(w) + c | \leq \frac{Cc}{\mathrm{Re} (w) -\d } \, , 
	\qquad \quad  | \tilde{G}_0' (w) | \leq \frac{Cc}{(\textrm{Re} (w) -\d )^2 } \, ,
	\end{equation}
that $| M_{(k+1)\wedge \mathcal{T} }(w) - M_{ k\wedge \mathcal{T} }(w) | = 
| \tilde{G}_0 ( \tilde{\Gamma}_k(w) -i\Theta_{k+1})+c| \leq Cc/\s$ for all $k < \nt$.
Using orthogonality of the increments of $(M_{k\wedge \mathcal{T}})$, then, we find
	\begin{equation}\label{newsecond}
	\begin{split} 
	\bbE \bigg( \Big| M_{\nt \wedge \mathcal{T}} &(w) - 
	M_{\ns \wedge \mathcal{T}} (w) \Big|^8 \bigg)  =
	\bbE \bigg( \Big| \sum_{k=\ns}^{\nt -1} \big( M_{(k+1) \wedge \mathcal{T}} (w) - 
	M_{k \wedge \mathcal{T}} (w) \big) \Big|^8 \bigg) 
	\\ & \leq C (\nt - \ns )^3  \sum_{k=\ns}^{\nt -1} 
	\bbE \Big( \big| M_{(k+1) \wedge \mathcal{T}} (w) - 
	M_{k \wedge \mathcal{T}} (w) \big|^8 \Big) 
	\leq \frac{C c^4}{\s^8} |t-s|^4 \, 
	\end{split} 
	\end{equation}
for some absolute constant $C>0$ and $n$ large enough. \\

Let us now look at the second term in \eqref{newgoal}. For $k\leq \ns \wedge \mathcal{T}$ set $\tilde{M}_k = M_k(w) - M_k(w')$. Then $\big( \tilde{M}_{k\wedge \mathcal{T}} \big)_{k\leq \nt}$ is a martingale, and by \eqref{log_bounds}
	\[ \big| \tilde{M}_{(k+1)\wedge \mathcal{T}} - \tilde{M}_{k\wedge \mathcal{T}} \big|
	= \big|  \tilde{G}_0(\tilde{\Gamma}_k (w-i\Theta_{k+1}) -
	 \tilde{G}_0(\tilde{\Gamma}_k (w' -i\Theta_{k+1}) \big| 
	 \leq \frac{Cc}{\s^2} ( |w-w'| + | \tilde{M}_{k\wedge \mathcal{T}} | )\, . \]
This, together with orthogonality of the increments, yields
	\[ \begin{split}
	\bbE \bigg( \Big| \tilde{M}_{\ns \wedge \mathcal{T}}  \Big|^8 \bigg) &
	= \bbE \bigg( \Big| \sum_{k=0}^{\ns -1} \big( \tilde{M}_{(k+1) \wedge \mathcal{T}}  - \tilde{M}_{k \wedge \mathcal{T}}  \big) \Big|^8 \bigg) 
	 \leq \ns \sum_{k=0}^{\ns -1} \bbE \Big( \big| \tilde{M}_{(k+1) \wedge \mathcal{T}}  - \tilde{M}_{k \wedge \mathcal{T}}  \big|^8 \Big) \\ &
	\leq \ns ^3 \frac{Cc^8}{\s^{16}} \sum_{k=0}^{\ns -1} 
	\Big( |w-w'|^8 + \bbE \big( |\tilde{M}_{k \wedge \mathcal{T}} |^8 \big) \Big) \, .
	\end{split} \]
It then follows from Gr\"{o}nwall's inequality that 
	\begin{equation}\label{newfirst}
	 \bbE \bigg( \Big| \tilde{M}_{\ns \wedge \mathcal{T}}  \Big|^8 \bigg)  
	\leq \ns^4 \frac{Cc^8}{\s^{16}} |w-w'|^8 e^{\frac{C\ns^4 c^8}{\s^{16}}} 
	\leq C(\s , T) c^4 |w-w'|^8 
	\end{equation}
for $n$ large enough and a constant $C(\s , T)$ depending only on $\s$ and $T$.
Putting \eqref{newsecond} and \eqref{newfirst} together, and using that $|w-w'|^8 \leq |w-w'|^4$ since $|w-w'|<1$ by assumption, we find
	\[ \begin{split} 
	\bbE \bigg( \Big| M_{\nt \wedge \mathcal{T}} &(w) - 
	M_{\ns \wedge \mathcal{T}} (w) \Big|^8 \bigg)  + 
	\bbE \bigg( \Big| \tilde{M}_{\ns \wedge \mathcal{T}}  \Big|^8 \bigg)   
	\leq C'(\s , T) c^4 |(t,w) - (s,w')|^4 \, ,
	\end{split}\]
for $n$ large enough and $C'(\s , T) = \max \{ C/\s^8 ; C(\s , T) \}$, as claimed.
\end{proof}
Kolmogorov's continuity theorem now yields the existence of a random variable $\mathscr{M}>0$ such that 
	\[ \sup_{\substack{ |t-s|<\eta \\ w\, : |\Re (w) -\s -\nt c |<\e \\ 
	 w': | \Re (w') -\s - \ns c |<\e \\ |w-w'|<4\eta }} 
	 \Big| M_{\nt \wedge \mathcal{T}} (w) - M_{\ns \wedge \mathcal{T}} (w') \Big|
	 \leq \mathscr{M} |(t,w) - (s,w')|^{1/16} \, , \]
with $\bbE (\mathscr{M}^8) \leq C(\s , T) c^4$, for $C(\s , T)$ as in the statement of Lemma \ref{Lnuovo}. Therefore we find
	\[ \bbP \Bigg( \sup_{\substack{ |t-s|<\eta \\ w\, : | \Re (w) -\s - \nt c |<\e \\ 
	 w': | \Re (w') -\s - \ns c |<\e \\ |w-w'|<4\eta }} 
	 \Big| M_{\nt \wedge \mathcal{T}} (w) - M_{\ns \wedge \mathcal{T}} (w') \Big|
	 \geq \nu \sqrt{c}\Bigg)  \leq 
	 \bbP \Big( \mathscr{M}^8 \geq \frac{\nu^8 c^4}{4\sqrt{\eta}} \Big) 
	 \leq 4 C(\s , T) \frac{\sqrt{\eta}}{\nu^8} \, , \] 
and sending first $n\to\infty$ and then $\eta \to 0$ we conclude that \eqref{big_tight} holds. 
This proves tightness, and hence it concludes the proof of Theorem \ref{Tfields}.

\vspace{3mm}

\section{The fluctuation process on $\mathscr{H}$} \label{Shol}
\textbf{Notation.}
Recall that $\bbT = \{ z\in \bbC : |z|=1 \}$, and set $\a \bbT = \{ z \in \bbC : |z| =\a \}$ for any $\a \in \bbR_+$. Let $\big( C(\a \bbT ) , \| \cdot \|_\infty \big)$ denote the space of continuous function on $\a \bbT$ equipped with the supremum norm. 
Moreover, for a subset $D$ of the complex plane, denote by $\mathscr{H}(D)$ the space of holomorphic functions on $D$. Whenever $D = \{ |z| >1 \}$, denote $\mathscr{H}(D)$ simply by $\mathscr{H}$.\\

We have shown in the previous section that, for any fixed $\s >0$, $\cF^\s_n \to \cF^\s$ as $n\to \infty$ in distribution with respect to the Skorokhod topology on the space $D[0,\infty )$ of c{\`a}dl{\`a}g  functions from $[0,\infty )$ to $C(\bbT )$. 
Note that, for any $t\geq 0$, the continuous function $\cF^\s_n (t, \cdot )$ coincides with the restriction of the holomorphic function 
	\begin{equation}\label{bigFn}
	\cF_n (t,z) = \frac{1}{\sqrt{c} } \Big( \log \frac{\Phi_\nt (z)}{z} -\nt c \Big) \, , 
	\end{equation}
defined for all $|z|>1$, to the circle $e^\s \bbT $. 
We show below that also the limit $\cF^\s (t,\cdot )$ can be interpreted as the restriction of a holomorphic random function $\cF(t,\cdot )$, defined for all $|z|>1$, to $e^\s \bbT$. Moreover, we provide an explicit construction of $\cF$, and prove that $\cF_n \to \cF$ in distribution as $n\to\infty$, 
with respect to the Skorokhod topology on the space of c{\`a}dl{\`a}g  functions from $[0,\infty )$ to $\mathscr{H}$. \\

Define, for all $N\geq 1$, a distance $d_N$ on $\mathscr{H}$ by setting 
\begin{equation}\label{distances}
 d_N(\phi , \psi ) = \sup_{|z| \geq e^{1/N} } |\phi (z) - \psi (z) |  \wedge 1 
\, ,  \quad  \mbox{ and let } \quad 
d (\phi , \psi ) = \sum_{N\geq 1 } \frac{d_N(\phi , \psi ) }{2^N}\, . 
\end{equation}
Since $(\mathscr{H},d_N)$ is a complete separable metric space for all $N\geq 1$, this makes $(\mathscr{H}, d)$ into a complete separable metric space (i.e. Polish space). 
It follows that, if $D_\mathscr{H}[0,\infty )$ denotes the space of c{\`a}dl{\`a}g  functions from $[0,\infty )$ to $\mathscr{H}$ equipped with the Skorokhod metric $\mathbbm{d}_\mathscr{H}$, then also $(D_\mathscr{H}[0,\infty ) , \mathbbm{d}_\mathscr{H})$ is a complete separable metric space. 
It is clear that each $\cF_n$ defined in \eqref{bigFn} is a random variable in $D_\mathscr{H}[0,\infty )$. 

Let us now describe the explicit construction of $\cF$. 
Let $D = \bbR / 2\pi \bbZ$, and recall that if we set $ e_k(\th) = e^{ik\th} /\sqrt{2\pi}$ for $k\in \bbZ$, then 
$(e_k)_{k\in\bbZ} $ 
forms an orthonormal basis (in short ONB)  for $L^2(D)$ with respect to the inner product $(f,g) = \int_{-\pi}^\pi \overline{f(\th )} g(\th) \dd \th$. 
On this basis, that we refer to as Fourier basis, the Poisson kernel $\Re \big(\frac{1+z}{1-z}\big)$ reads
	\[ P_{1/r} (\th ) = \Re \Big( \frac{1+e^{i\th}/r}{1-e^{i\th}/r} \Big) 
	= \sqrt{2\pi} \Big( e_0 + \sum_{k \in \bbZ \setminus \{ 0\}}   r^{-|k|} e_k(\th ) \Big) \]
for any  $r>1$. 
Take two independent collections $(\b_k)_{k\in\bbZ}$ and $(\b'_k)_{k\in \bbZ}$ of i.i.d. Brownian Motions on $\bbR$, and denote by $(A_{k})_{k\in \bbZ}$ and $(B_{k})_{k\in \bbZ}$ the solutions to 
	\[ \begin{cases}
	\dd A_k (t) = -|k| A_k(t) \dd t + \sqrt{2} \,\dd \b_k(t) \, , \\
	A_k(0)=0 
	\end{cases}
	\qquad 
	\begin{cases}
	\dd B_k (t) = -|k| B_k(t) \dd t + \sqrt{2} \,\dd \b'_k(t)  \\
	B_k(0)=0 \,.
	\end{cases}\]
Then $A_k , B_k$ perform independent Ornstein-Uhlenbeck processes on $\bbR$ with invariant distribution $\cN (0,1/|k|)$.
Define formally
	\begin{equation} \label{Wformal} 
	\cW (t,\th) = \sum_{k\in \bbZ \setminus \{ 0 \} } \Big( \frac{A_{k}(t) + i B_{k}(t)}{\sqrt{2}}\Big)  e_k(\th) 
	\end{equation}
for $(t,\th ) \in [0,\infty ) \times [-\pi,\pi )$. Finally, for $(t, re^{ia }) \in [0,\infty ) \times \{|z|>1\}$ set 
	\[ \cF (t,re^{ia}) = \frac{1}{\sqrt{2\pi}} \Big(P_{1/r}  * \cW(t,\cdot ) \Big) (a) 
	= \frac{1}{\sqrt{2\pi}}  \int_{-\pi }^\pi \Re \Big( \frac{1+e^{i (a-\th )}/r}{1-e^{i (a-\th )}/r} \Big) \,  \cW(t,\th  ) \dd \th \, .   \]

\begin{Theorem}\label{Thol_new}
$\cF$ is a random variable in $C_\mathscr{H} [0,\infty ) \subset D_\mathscr{H}[0,\infty )$. Moreover, $\cF_n \to \cF$ as $n\to\infty$ in distribution with respect to  the Skorokhod metric $\mathbbm{d}_\mathscr{H}$ on $D_\mathscr{H}[0,\infty )$. 
\end{Theorem}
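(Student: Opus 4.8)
The plan is to prove the two assertions in turn: that $\cF$ defines an almost surely continuous $\mathscr{H}$-valued process, and that $\cF_n\to\cF$ in $D_\mathscr{H}[0,\infty)$, the latter being deduced from the single--circle functional limit theorem (Theorem~\ref{Tfields}) by using the maximum modulus principle to control the $\mathscr{H}$-valued objects through their restrictions to circles. \emph{Regularity of $\cF$.} First I would check that, almost surely, for every $t\ge0$ the series defining $\cW(t,\cdot)$ --- equivalently the Poisson integral defining $\cF(t,\cdot)$ --- converges uniformly on $\{|z|\ge e^{1/N}\}$ for each $N\ge1$, so that $\cF(t,\cdot)$ is a locally uniform limit of holomorphic partial sums and hence lies in $\mathscr{H}$. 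The input is that $A_k(t),B_k(t)$ are centred Gaussian with variance at most $1/|k|$, so a maximal inequality for the Ornstein--Uhlenbeck processes yields $\bbE\big(\sup_{t\le T}(|A_k(t)|+|B_k(t)|)^2\big)\le C(T)\log|k|/|k|$; since the $k$-th term of the series carries a factor decaying like $e^{-|k|/N}$ on $\{|z|\ge e^{1/N}\}$, a Borel--Cantelli argument gives the asserted uniform convergence on each annular region, simultaneously for $t$ in a compact interval. Continuity of $t\mapsto\cF(t,\cdot)$ in each $d_N$, hence in $d$, then follows from continuity of the paths $A_k,B_k$ and the uniform smallness of the tail of the series; joint measurability is routine. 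Thus $\cF\in C_\mathscr{H}[0,\infty)\subset D_\mathscr{H}[0,\infty)$.

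\emph{Identification on circles.} Next I would show that the restriction $\cF(\cdot, e^\s e^{i\cdot})$ has the law of the process $\cF^\s$ of Theorem~\ref{Tfields}. Both are centred, continuous, $C(\bbT)$-valued Gaussian processes, so it suffices to match covariances. Substituting the explicit construction into $\cF(t,e^\s e^{ia})$ and using the Ornstein--Uhlenbeck covariances $\cov(A_k(t),A_k(s))=\cov(B_k(t),B_k(s))=\big(e^{-|k||t-s|}-e^{-|k|(t+s)}\big)/|k|$, together with independence of $(A_k)$ from $(B_k)$ and across $k$, the computation reduces to summing series of the form $\sum_{m\ge1}m^{-1}x^m e^{im\phi}=-\log(1-xe^{i\phi})$; separating real and imaginary parts reproduces exactly the kernels $c_{s,t}(\s,a-b)$ and $\hat c_{s,t}(\s,a-b)$ of Theorem~\ref{Tfields}, and at coinciding arguments the variance $v^2_t(\s)$ of Theorem~\ref{pointwise}. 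Hence $\cF(\cdot,e^\s e^{i\cdot})\stackrel{d}{=}\cF^\s$ for every $\s>0$.

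\emph{From circles to $\mathscr{H}$.} By Prohorov's theorem it remains to establish tightness of $(\cF_n)_n$ in $D_\mathscr{H}[0,\infty)$ and convergence of its finite--dimensional distributions. For tightness I would use the criterion already applied in Section~\ref{Stight} (Corollary~7.4 of \cite{ethier2009markov}): since $\cF_n(t,\cdot)$ is holomorphic on $\{|z|>1\}$ and vanishes at $\infty$, the maximum modulus principle gives $\sup_{|z|\ge e^{\s}}|\cF_n(t,z)|=\|\cF_n^\s(t,\cdot)\|_\infty$ and $\sup_{|z|\ge e^{\s}}|\cF_n(t,z)-\cF_n(s,z)|=\|\cF_n^\s(t,\cdot)-\cF_n^\s(s,\cdot)\|_\infty$; combined with Montel's theorem (a subset of $\mathscr{H}$ bounded on each $\{|z|\ge e^{1/N}\}$ is relatively compact), this reduces both the compact--containment and the modulus--of--continuity conditions for $(\cF_n)$ in $D_\mathscr{H}[0,\infty)$ to the corresponding statements for $(\cF_n^{1/N})$ on the circles $e^{1/N}\bbT$, which are provided by Theorem~\ref{Tfields}; in particular $\{\cF_n(t,\cdot)\}_n$ is tight in $\mathscr{H}$ for each $t$. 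For the finite--dimensional distributions, fix $0\le t_1<\dots<t_M$; by tightness, along a suitable subsequence $(\cF_n(t_1,\cdot),\dots,\cF_n(t_M,\cdot))$ converges in $\mathscr{H}^M$ to some $(\cG_1,\dots,\cG_M)$. Restriction to the circle $e\bbT$ being continuous, $(\cG_1|_{e\bbT},\dots,\cG_M|_{e\bbT})$ is the corresponding limit of $(\cF_n^\s(t_1,\cdot),\dots,\cF_n^\s(t_M,\cdot))$ with $\s=1$, which by Theorem~\ref{Tfields} (weak convergence in $D[0,\infty)$ with continuous limit, hence convergence of finite--dimensional distributions) and the identification above has the law of $(\cF(t_1,\cdot)|_{e\bbT},\dots,\cF(t_M,\cdot)|_{e\bbT})$. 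Since an element of $\mathscr{H}$ is recovered from its restriction to $e\bbT$ by one fixed Borel map (read off the Fourier coefficients and resum the two--sided Laurent series), $(\cG_j)_j$ and $(\cF(t_j,\cdot))_j$ are images of equidistributed random elements of $C(e\bbT)^M$ under the same map, so $(\cG_j)_j\stackrel{d}{=}(\cF(t_j,\cdot))_j$; the subsequential limit being thereby determined, the finite--dimensional distributions converge. Tightness together with convergence of finite--dimensional distributions yields $\cF_n\to\cF$ in $D_\mathscr{H}[0,\infty)$.

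The main obstacle is this last identification: one must be sure that the single--circle statement of Theorem~\ref{Tfields} suffices, with no genuine multi--radius limit theorem needed. What makes this work is that every subsequential limit of $(\cF_n)$ is by construction $\mathscr{H}$-valued, hence uniquely determined by its trace on one circle, so matching laws on a single circle propagates to all of $\mathscr{H}$. The only other non--formal point is the uniform--in--$t$ control of the Ornstein--Uhlenbeck tails in the regularity step, which is standard.
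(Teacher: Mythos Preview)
Your proof is correct, and the regularity and circle-identification steps mirror the paper's Lemma~\ref{lemma_new} closely (the paper uses Doob's inequality for the submartingale $e^{-2kt}A_k^2(t)$ rather than an $L^2$ maximal bound, but the Borel--Cantelli conclusion is the same). The convergence step, however, is organised differently.

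You argue by Prohorov: tightness of $(\cF_n)$ in $D_\mathscr{H}[0,\infty)$ via maximum modulus and Montel reduces to tightness on circles, and finite-dimensional distributions are pinned down by taking subsequential limits, restricting to a single circle, and invoking that the restriction map $\mathscr{H}\to C(e\bbT)$ is injective with Borel inverse. The paper instead observes that all of this is a single continuous mapping argument: for each $N$ the holomorphic extension operator $H_N:C(e^{1/N}\bbT)\to\mathscr{H}(\{|z|\ge e^{1/N}\})$ is continuous (by maximum modulus, since everything vanishes at $\infty$), so $\cF_n^{1/N}\to\cF^{1/N}$ in $D_N[0,\infty)$ transports directly to $\cF_n=H_N(\cF_n^{1/N})\to H_N(\cF^{1/N})\stackrel{d}{=}\cF$ in $D_{\mathscr{H}_N}[0,\infty)$, and letting $N\to\infty$ finishes. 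Both routes rest on the same fact---that a holomorphic function vanishing at infinity is controlled in sup-norm by its circle trace---but the paper's packaging avoids redoing tightness and FDD in $\mathscr{H}$, and sidesteps the Lusin--Souslin-type measurability issue you need for the inverse of the restriction map. Your approach is more hands-on and perhaps more transparent about why the single-circle result suffices; the paper's is shorter.
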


The question of how to make sense of the boundary values $\cW$ defined formally in \eqref{Wformal} is addressed in the next section, where we show that $\cW$ can be rigorously defined as an Ornstein-Uhlenbeck process on a suitable infinite-dimensional Hilbert space.

The rest of this section is devoted to the proof of Theorem \ref{Thol_new}. 

\begin{Lemma}\label{lemma_new}
$\cF$ is a random variable in $C_\mathscr{H}[0,\infty )$. Moreover, $\cF$ is Gaussian, and its restriction to $ [0,\infty ) \times e^\s \bbT$ agrees in distribution with $\cF^\s$ defined in Theorem \ref{Tfields}, for any $\s >0$.
\end{Lemma}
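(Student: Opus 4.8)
The plan is to replace the Poisson–kernel definition of $\cF$ by an explicit Laurent expansion in the variable $z^{-1}$, and to deduce each assertion from a computation with the Ornstein–Uhlenbeck coefficients. As a first step I would rewrite the explicit formula for $\cF$ as
\[
\cF(t,z)=\sum_{m\ge 1}c_m(t)\,z^{-m}\,,\qquad |z|>1\,,
\]
where $c_m(t)$ is a fixed scalar multiple of $A_{-m}(t)+iB_{-m}(t)$ (the scalar being dictated by the $\sqrt{2\pi}$ and $\sqrt2$ normalisations in $P_{1/r}$ and in \eqref{Wformal}); this exhibits each $\cF(t,\cdot)$ as a holomorphic function on $\{|z|>1\}$ as soon as the series converges.

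To see that $\cF$ is a random variable in $C_\mathscr{H}[0,\infty)$ it then suffices to prove that, for every $T>0$ and every $r>1$,
\[
\sum_{m\ge 1}\sup_{t\le T}\big(|A_{-m}(t)|+|B_{-m}(t)|\big)\,r^{-m}<\infty\qquad\text{almost surely}\,.
\]
Indeed, this forces the series for $\cF$ to converge uniformly on $[0,T]\times\{|z|\ge r\}$; since each $A_{-m},B_{-m}$ has continuous paths this makes $t\mapsto\cF(t,\cdot)$ continuous into $(\mathscr{H},d_N)$ for every $N$, hence into $(\mathscr{H},d)$, and each $\cF(t,\cdot)$, being a locally uniform limit of polynomials in $z^{-1}$, is holomorphic on $\{|z|>1\}$. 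The summability follows from Gaussian concentration: $A_{-m}$ and $B_{-m}$ are centred Gaussian processes on $[0,T]$ with $\var(A_{-m}(t))=\tfrac{1-e^{-2mt}}{m}\le 1$ and with increments obeying $\bbE[(A_{-m}(t)-A_{-m}(s))^2]\le C|t-s|$ \emph{uniformly in $m$} (combine the Ornstein--Uhlenbeck computation with the crude bound $\le 4/m$); a Kolmogorov--Chentsov or Borell--TIS estimate therefore gives $\bbP(\sup_{t\le T}|A_{-m}(t)|>m)\le C(T)e^{-cm^2}$, and Borel--Cantelli yields $\sup_{t\le T}|A_{-m}(t)|\le m$ for all large $m$ a.s.

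Gaussianity of $\cF$ is then immediate: by the previous step $\cF$ is an almost sure limit of finite linear combinations of the jointly Gaussian family $\{A_k(t),B_k(t):k\ne 0,\ t\ge 0\}$ (solutions of linear stochastic differential equations driven by independent Brownian motions), so all its finite-dimensional marginals, and all its continuous linear functionals, are Gaussian. In particular, for each $\s>0$ the restriction $\cG^\s:=\big(\cF(t,e^{\s}e^{i\cdot})\big)_{t\ge 0}$ is a centred, continuous, $C(\bbT)$-valued Gaussian process --- exactly the type of process $\cF^\s$ is by Theorem~\ref{Tfields}. Since the law on $D[0,\infty)$ of such a process is determined by its finite-dimensional distributions, and those of a Gaussian process are determined by mean and covariance, it remains only to check that $\cG^\s$ and $\cF^\s$ have the same covariance (both being centred).

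This last step is a direct Fourier computation from the series. Using independence of $(A_k),(B_k)$ across $k$ and the Ornstein--Uhlenbeck covariance $\bbE[A_{-m}(t)A_{-m}(s)]=e^{-m(t-s)}\tfrac{1-e^{-2ms}}{m}$ for $s\le t$, the pseudo-covariance $\bbE[\cG^\s(t,e^{ia})\,\cG^\s(s,e^{ib})]$ vanishes (the $A$- and $B$-contributions cancel), which reproduces the properness encoded in the matrix of Theorem~\ref{Tfields}, while
\[
\bbE\big[\cG^\s(t,e^{ia})\,\overline{\cG^\s(s,e^{ib})}\big]
=2\sum_{m\ge 1}\frac{e^{-m(t-s)}\big(1-e^{-2ms}\big)e^{-2\s m}}{m}\,e^{-im(a-b)}
=2\,\overline{\log\frac{1-e^{-2\s-(t+s)+i(a-b)}}{1-e^{-2\s-(t-s)+i(a-b)}}}\,;
\]
separating real and imaginary parts gives precisely the matrix $\left(\begin{smallmatrix}c_{s,t}&\hat c_{s,t}\\ -\hat c_{s,t}&c_{s,t}\end{smallmatrix}\right)$ of Theorem~\ref{Tfields}, and the case $s=t$, $a=b$ gives $v^2_t(\s)$ on the diagonal, so $\cG^\s\stackrel{d}{=}\cF^\s$ for every $\s>0$. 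I expect the genuine obstacle to lie in the regularity step: the obvious route --- Doob's inequality applied to the martingale factor $e^{mt}$ in the Ornstein--Uhlenbeck decomposition --- produces maximal bounds that grow exponentially in $m$ and are useless against $r^{-m}$, so one must argue instead through the fact that the canonical metric of $A_{-m}$ on $[0,T]$ is uniformly H\"older-$\tfrac12$ in $m$. The remaining ingredients --- pinning down the normalising constant $c_m(t)$ in the first step and performing the Fourier summation above --- are routine.
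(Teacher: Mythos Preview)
Your overall architecture coincides with the paper's: expand $\cF$ in Fourier modes, control $\sup_{t\le T}|A_k(t)|$ to obtain uniform convergence (hence holomorphicity and continuity in $t$), and then match the Ornstein--Uhlenbeck covariances with those of Theorem~\ref{Tfields}. Two points of divergence are worth flagging.

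First, your opening rewriting is not quite a rewriting. The Poisson integral $P_{1/r}\ast\cW$ produces \emph{all} Fourier modes, so that $\cF(t,re^{ia})=\sum_{k\neq 0}r^{-|k|}\frac{A_k+iB_k}{\sqrt 2}\,e^{ika}$; in terms of $z=re^{ia}$ this has both a holomorphic part $\sum_{m\ge 1}\frac{A_{-m}+iB_{-m}}{\sqrt 2}z^{-m}$ and an anti-holomorphic part $\sum_{m\ge 1}\frac{A_m+iB_m}{\sqrt 2}\bar z^{-m}$. Your series keeps only the first half, so as stated the identity is false. The paper does not pretend this is an equality either: it inserts an explicit $\stackrel{(d)}{=}$ step, using that $(A_k,B_k,A_{-k},B_{-k})$ are i.i.d.\ to pass to an equal-in-law process with a genuine power-series expansion. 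You should make that distributional identification explicit before proceeding; once done, the rest of your argument is unaffected.

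Second, for the regularity step the paper \emph{does} use Doob --- but not on the martingale $e^{kt}A_k(t)$ you dismiss. Instead it writes down a maximal inequality for $e^{-2kt}A_k(t)^2$ which, after the obvious inclusion $\{|A_k(t)|\ge\lambda\}\subset\{e^{-2kt}A_k(t)^2\ge e^{-2kT}\lambda^2\}$, cancels the exponential growth and yields the summable bound $\bbP(\sup_{t\le T}|A_k(t)|\ge e^{k/2N})\le e^{-k/N}/k$. Your route via the uniform H\"older-$\tfrac12$ canonical metric and Borell--TIS is perfectly valid and gives a sharper bound ($\sup_t|A_k(t)|\le k$ eventually rather than $e^{k/2N}$); it is simply more machinery than the paper needs. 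The covariance computation you outline is the same as the paper's.
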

\begin{proof}
The fact that $\cF$ is Gaussian is true by construction. 
Fix any $t\geq 0$, and expand $\cF (t,\cdot )$ in Fourier basis, to get
	\begin{equation}\label{expand}
	 \begin{split} 
	\cF (t, re^{ia}) & \!= 
	\bigg(  e_0 + \sum_{k\neq 0} r^{-|k|} e^{-ika}e_{k}  , \,
	\sum_{k\neq 0} \Big( \frac{A_{k}(t) + i B_{k}(t)}{\sqrt{2}}\Big)  e_k \bigg)  =
	 \sum_{k\neq 0}  r^{-|k|} \Big( \frac{ A_{k}(t) +i B_{k}(t) }{\sqrt{2}} \Big)  e^{ika}
\\ & \stackrel{(d)}{=}
 \sum_{k\geq 1}  r^{-k} \big[ A_{k}(t) \cos ka -B_{k}(t) \sin ka \big]
	+ i \sum_{k\geq 1}  r^{-k} \big[ B_{k}(t) \cos ka + A_{k}(t) \sin ka \big]
	\end{split} 
	\end{equation}
for any $r>1$ and $a \in [-\pi,\pi )$, where the last equality holds in law as stochastic processes, and it follows from the independence of the OU processes. This provides an almost surely convergent power series expansion for $\cF(t,\cdot )$ at all $z$ with $|z|>1$, and hence it shows that $\cF$ is a Gaussian stochastic processes taking values in $\mathscr{H}$.
Recall from \eqref{distances} the definition of the distances $(d_N), d$ on $\mathscr{H}$.
To see that $\cF$ is continuous, we have to show that for all compacts of the form $[0,T]$ it holds almost surely that, if $t_n \to t \in [0,T]$ as $n\to\infty$, then $d\big(\cF(t_n , \cdot ) , \cF(t, \cdot ) \big) \to 0$, i.e. $d_N\big(\cF(t_n , \cdot ) , \cF(t, \cdot ) \big) \to 0$ for all $N\geq 1$. We have:
	\[ \begin{split} 
	d_N\big(\cF(t_n , \cdot ) , \cF(t, \cdot ) \big) & =
	\sup_{|z| \geq e^{1/N} }\big| \cF(t_n , z ) - \cF(t,z ) \big|
	 = \sup_{|z| = e^{1/N}} \big| \cF(t_n , z ) - \cF(t,z ) \big|
	 \\ &  = \sup_{a\in [-\pi , \pi)} \Big| 
	 \sum_{k\geq 1} e^{-k/N} \big[ ( A_{k}(t_n) +i B_{k}(t_n)  )  -  ( A_{k}(t) +i B_{k}(t)  )  
	 \big] e^{ika} \Big| \\
	 & \leq \,  \sum_{k\geq 1} e^{-k/N} \Big( | A_{k}(t_n) - A_{k}(t) | 
	 +   | B_{k}(t_n) - B_{k}(t) | \Big) \, . 
	\end{split} \]
To show that the last term converges to $0$ as $n\to\infty$ almost surely for all $t_n , t\in [0,T]$, then, it suffices to prove that the sequence of continuous functions 
$ g_M(t) = \sum_{k= 1}^M e^{-k/N}  | A_{k}(t)  | $ converges uniformly on $[0,T]$ as $M\to\infty$. Indeed, we find
	\[ 
	\sup_{t\in [0,T] } | g_M(t) - g_\infty (t) | 
	= \sup_{t\in [0,T]} \sum_{k=M+1}^\infty  e^{-k/N} |A_k(t)|
	\leq \sum_{k=M+1}^\infty e^{-k/N}  \sup_{t\in [0,T]}  |A_k(t)| \, . \]
On the other hand Doob's maximal inequality for the submartingale $( e^{-2kt} A_k^2(t) )_{t\leq T}$ yields
	\[ \bbP \Big( \sup_{t\in [0,T]}  |A_k(t)| \geq e^{k/2N} \Big) 
	\leq \bbP \Big( \sup_{t\in [0,T]} \{ e^{-2kt} A_k^2(t) \} \geq e^{k/N - 2kT} \Big) 
	\leq \frac{\bbE (A_k^2(T))}{e^{k/N}} \leq \frac{1}{k e^{k/N}} \, ,  \, \]
so by Borel-Cantelli  $\ds \sup_{t\in [0,T]} |A_k(t)| < e^{k/2N} $ for almost all $k\geq 1$, almost surely. This proves uniform convergence on compacts, and hence almost sure continuity of $\cF$. \\

To conclude the proof, we show that $\cF$ has the same covariance structure of $\cF^\s$ on the circle $e^\s \bbT$, for arbitrary $\s >0$. Indeed, it follows from \eqref{expand} that real and imaginary parts of $\cF (t,e^{\s +ia})$, $a\in [-\pi , \pi )$, are independent centred real Gaussian random variables, with
	\[ \bbE \big[ (\mathrm{Re} \cF(t,e^{\s+ia} ))^2 \big]
	= \bbE \big[ (\mathrm{Im} \cF(t,re^{\s+ia} ) )^2\big] 
	=  \sum_{k\geq 1} \frac{e^{-2k\s} (1-e^{-2kt})}{k} = 
	\log \frac{ 1- e^{-2(t+\s)}}{1-e^{-2\s}} \, . \]
Moreover, expanding $\cF (s, e^\s )$ as in \eqref{expand}, one further checks that for $s<t$
	\[ \mathrm{Cov} \big( \mathrm{Re} \cF (t,e^{\s+ia} ) , 
	\mathrm{Re} \cF (s,e^\s ) \big) 
	= \mathrm{Cov}\big(\mathrm{Im} \cF (t,e^{\s+ia} ) , \mathrm{Im} \cF (s,e^\s) \big) 
	= \mathrm{Re}\bigg(\!\log \frac{ 1- e^{-(t+s)-2\s +ia}}{1-e^{-(t-s)-2\s+ia}}\bigg) , \]
and 
	\[ \mathrm{Cov}\big(\mathrm{Re} \cF (t,e^{\s+ia} ),\mathrm{Im} \cF (s,e^\s ) \big) 
	= - \mathrm{Cov}\big(\mathrm{Im}\cF (t,e^{\s+ia} ),\mathrm{Re} \cF (s,e^\s ) \big) 
	= \mathrm{Im} \bigg( \log \frac{ 1- e^{-(t+s)-2\s+ia}}{1-e^{-(t-s)-2\s+ia}}  \bigg). \]
By rotational invariance in the spatial coordinate, this is enough to conclude that $ \cF $ and $\cF^\s$ have the same covariance structure, and hence the same law, on every circle of the form $e^\s \bbT$, for arbitrary $\s >0$, as claimed. 
\end{proof}

\begin{proof}[Proof of Theorem \ref{Thol_new}]
For any $N\geq 1$, denote by $H_N$ the operator that maps a continuous function on $e^{1/N} \bbT$ to its holomorphic extension to the outer region $\{ |z| \geq e^{1/N} \}$. 
Moreover, let $D_N[0,\infty )$ and $D_{\mathscr{H}_N} [0,\infty )$ denote respectively the space of c{\`a}dl{\`a}g functions from $[0,\infty ) $ to $\big( C(e^{1/N} \bbT ), \| \cdot \|_\infty \big) $,  and the space of c{\`a}dl{\`a}g functions from $[0,\infty ) $ to $\big(  \mathscr{H} ( \{ |z| \geq e^{1/N} \} ) , d_N \big)$, both equipped with the Skorokhod topology. Finally, let 
$ f_N :  D_N[0,\infty ) \rightarrow  D_{\mathscr{H}_N}[0,\infty ) $ be defined for $x \in D_N[0,\infty )$ by
\[ \begin{split}
	f_N(x) :\;   [0, &\infty ) \to \mathscr{H} ( \{ |z| \geq e^{1/N} \} )\\
	 &   t\quad   \mapsto \quad   H_N(x(t,\cdot )) \,.
\end{split} \]
	
We claim that the proof of Theorem \ref{Thol_new} amounts to showing that the map $f_N$ is continuous. Indeed, assume so. Then it follows from Theorem \ref{Tfields}, together with the continuous mapping theorem (cf. \cite{billingsley1999convergence}, Theorem 2.7), that 
$ f_N (\cF_n^{1/N} ) \to f_N (\cF^{1/N} )$ 
in distribution with respect to the Skorokhod metric on  $D_{\mathscr{H}_N} [0,\infty )$. 
Moreover, recall from \eqref{expand} the definition of $\cF$. Then, since by Lemma \ref{lemma_new} the stochastic process $\cF^{1/N} $ agrees in law with $\cF$ on $[0, \infty ) \times   e^{1/N}\bbT$, we deduce that the corresponding holomorphic extensions $f_N (\cF^{1/N} ) $ and $\cF$  agree in law on $[0,\infty ) \times \{ |z|\geq e^{1/N} \}$. It follows that  
$f_N (\cF_n^{1/N} ) \to \cF$ in distribution with respect to the Skorokhod metric on  $D_{\mathscr{H}_N} [0,\infty )$. Since $f_N (\cF_n^{1/N} ) \equiv \cF_n$ on $[0,\infty ) \times \{ |z| \geq e^{1/N} \}$, with $\cF_n$ defined as in \eqref{bigFn}, this shows that $\cF_n \to \cF$ in distribution with respect to the Skorokhod metric on  $D_{\mathscr{H}_N} [0,\infty )$. 
$N$ being arbitrary, we conclude that $\cF_n \to \cF$ in distribution with respect to the Skorokhod metric $\mathbbm{d}_\mathscr{H}$ on  $D_{\mathscr{H}} [0,\infty )$, which is what we wanted to show. \\

It remains to prove that for all $N \geq 1$ the map $f_N$ is continuous from $D_N[0,\infty )$ to $D_{\mathscr{H}_N}[0,\infty )$. 
Since $f_N$ only acts on the spatial component, it suffices to show that the holomorphic extension map $H_N$ is continuous from $\big( C(e^{1/N}\bbT), \| \cdot \|_\infty \big)$ to $\big( \mathscr{H}(\{ |z|\geq e^{1/N} \}) , d_N \big) $ 
for all $N\geq 1$. 
Indeed, take $(\phi_n )_n $, $\phi$ in $C(e^{1/N} \bbT )$ and suppose that $\| \phi_n - \phi \|_\infty \to 0$ as $n\to\infty$. Then we have: 
	\[ \begin{split} 
	d_N ( H_N(\phi_n) , H_N(\phi) ) & 
	= \sup_{|z| \geq e^{1/N}} \big| (H_N \phi_n)(z) - (H_N \phi )(z) \big| \wedge 1 
	= \sup_{|z| = e^{1/N}} \big| (H_N \phi_n)(z) - (H_N \phi )(z) \big| \wedge 1
	\\ & = \sup_{|z| = e^{1/N}} |  \phi_n(z) - \phi (z) |  \wedge 1
	\leq  \| \phi_n - \phi \|_\infty \to 0 \end{split}\]
as $n \to\infty$, where the second equality follows by applying the maximum principle, which is in force since $(H_N\phi_n) (z) \to 0$ as $|z|\to\infty$ by construction for all $N,n \geq 1$. This concludes the proof.
\end{proof}

\vspace{3mm}

\section{The boundary process} \label{explicit}
We have proved weak convergence to a limiting Gaussian process $\cF$ taking values in the space $\mathscr{H}$ of holomorphic functions on $\{ |z|>1\}$, of which we have provided an explicit construction. In this section we address the question of a rigorous definition of the boundary values $\cW$  of $\cF$, formally given by \eqref{Wformal}.

\subsection{Abstract Wiener Space construction}
It is clear that we have no hope to define $\cW (t, \cdot )$ pointwise, since the formal series \eqref{Wformal} diverges almost surely at each point. 
One could try, on the other hand, to make sense of it as a  complex Gaussian process taking values in a suitable Hilbert space of functions on the unit circle. 

For convenience of the reader, we review below the construction of Gaussian random variables, and then Gaussian stochastic processes, on infinite dimensional Hilbert spaces. This will then be applied to rigorously define the boundary process $\cW$. Our presentation follows \cite{da2014stochastic, sheffield2007gaussian}. 
 
 \subsubsection{Gaussian random variables on a Hilbert space}

\begin{Definition}\label{abstractWS} 
An Abstract Wiener Space is a triple $(H,B,\mu )$, where:
\begin{itemize}
\item[(i)]  $\big( H, (\cdot , \cdot )_H \big) $ is a Hilbert space,
\item[(ii)] $\big( B , \| \cdot \|_B \big)$ is the Banach space completion of $H$ with respect to the measurable norm $\| \cdot \|_B$ on $H$, equipped with the Borel $\s$-algebra $\cB$ induced by $\| \cdot \|_B$, and 
\item[(iii)] $\mu$ is the unique Borel probability measure on $(B, \cB )$ such that, if $B^*$ denotes the dual space of $B$, 
then $\mu \circ \phi^{-1} = \cN ( 0 , \| \tilde{\phi} \|_H^2 )$ for all $\phi \in B^*$, where $\tilde{\phi}$ is the unique element of $H$ such that $\phi ( h) = ( \tilde{\phi} , h )_H$ for all $h \in H$.  
\end{itemize} 
\end{Definition}

Note that   (iii) also reads as follows: if $X$ is a random variable on $(B,\cB)$ distributed according to $\mu$, then $\phi (X) \sim \cN(0 , \| \tilde{\phi} \|_H )$ for all $\phi \in B^*$. 
In this case we say\footnote{Note that, unless $\dim H <\infty$, a standard Gaussian random variable $X$ on $H$ does not take values in $H$, but only in the larger Banach space $B$.} that $X$ is a standard Gaussian random variable on $H$. 
We refer the reader to \cite{gross1967abstract, sheffield2007gaussian} for the definition of measurable norm on a Hilbert space, and for existence and uniqueness of such a measure $\mu$. Here we will only need the following two properties. 
\begin{Facts}
Let $H$ be a Hilbert space with inner product $(\cdot , \cdot )_H$, and let $\| \cdot \|_H^2 = (\cdot , \cdot )_H$. Then:
\begin{itemize}
\item[(a)] any measurable norm on $ H $ is weaker than $\| \cdot \|_H$, and
\item[(b)] if $T$ is a Hilbert-Schmidt operator on $H$, i.e. 
	\[ \sum_{i=1}^\infty \| T e_i \|_{H}^2 < \infty \, , 
	\qquad (e_i)_{i=1}^\infty \mbox{ ONB of } \big( H , ( \cdot , \cdot )_{H} \big) \, , \]
then  $\| T \cdot \|_{H} $ is a measurable norm on $H$. 
\end{itemize}
\end{Facts}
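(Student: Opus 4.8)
The plan is to check both assertions directly against Gross's definition of a measurable norm, which I recall in the following form: writing $\cP$ for the collection of finite-dimensional orthogonal projections on $H$ and, for $P\in\cP$, letting $\nu_P$ denote the standard Gaussian measure on $PH$ (the centred Gaussian of dimension $\mathrm{rank}\,P$ with identity covariance relative to $(\cdot,\cdot)_H$), a seminorm $\|\cdot\|_*$ on $H$ is \emph{measurable} when for every $\eta>0$ there is $P_0\in\cP$ with
\[ \nu_P\big(\{\,v : \|v\|_*>\eta\,\}\big)<\eta\qquad\text{for every }P\in\cP\text{ such that }PP_0=0, \]
where $PP_0=0$ expresses that $PH\perp P_0H$. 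Both parts are then short computations with this definition, which I would carry out as follows.

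For part (b), set $\|\cdot\|_*=\|T\cdot\|_H$ and fix $\eta>0$. Since $\sum_k\|Tf_k\|_H^2$ is finite and independent of the chosen orthonormal basis $(f_k)_{k\ge1}$ of $H$, I would fix such a basis, pick $N$ with $\sum_{k>N}\|Tf_k\|_H^2<\eta^3$, and take $P_0$ to be the projection onto $\mathrm{span}\{f_1,\dots,f_N\}$. If $P\in\cP$ and $PP_0=0$ then $PH\subseteq(P_0H)^\perp$, the closed linear span of $\{f_k : k>N\}$, so for any orthonormal basis $g_1,\dots,g_m$ of $PH$ — extended to an orthonormal basis of $(P_0H)^\perp$, on which, as for any closed subspace, $\sum\|T\cdot\|_H^2$ does not depend on the orthonormal basis chosen — one gets $\sum_{i=1}^m\|Tg_i\|_H^2\le\sum_{k>N}\|Tf_k\|_H^2<\eta^3$. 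Realising $\nu_P$ as the law of $\xi=\sum_{i=1}^m\zeta_i g_i$ with $\zeta_i$ i.i.d.\ $\cN(0,1)$ gives $\bbE\big(\|T\xi\|_H^2\big)=\sum_{i,j}\bbE(\zeta_i\zeta_j)\,(Tg_i,Tg_j)_H=\sum_{i=1}^m\|Tg_i\|_H^2<\eta^3$, so Markov's inequality yields $\nu_P(\{\|v\|_*>\eta\})\le\eta^{-2}\bbE(\|T\xi\|_H^2)<\eta$, as required. (Here $\|T\cdot\|_H$ is a genuine norm when $T$ is injective and a measurable seminorm otherwise, which is all the construction needs.)

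For part (a), I would argue by contraposition. Assume $\|\cdot\|_*$ is \emph{not} weaker than $\|\cdot\|_H$, i.e.\ $\sup\{\|x\|_* : \|x\|_H=1\}=\infty$. The key preliminary step is that $\|\cdot\|_*$ is then unbounded on the $\|\cdot\|_H$-unit sphere of \emph{every} finite-codimensional subspace: if instead $\|\cdot\|_*\le C\|\cdot\|_H$ on $(FH)^\perp$ for some $F\in\cP$, then, all norms on the finite-dimensional space $FH$ being equivalent, also $\|\cdot\|_*\le C'\|\cdot\|_H$ there, and writing $x=Fx+(I-F)x$ and using the triangle inequality gives $\|x\|_*\le\sqrt2\,\max(C,C')\,\|x\|_H$ for all $x$, a contradiction. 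Now take $\eta=\tfrac12$ and let $P_0\in\cP$ be as in the definition of measurability; by the preliminary step choose a $\|\cdot\|_H$-unit vector $e\in(P_0H)^\perp$ with $\|e\|_*$ as large as we please, and let $P$ be the rank-one projection onto $\mathrm{span}\{e\}$, so $PP_0=0$. Under $\nu_P$ one has $\|v\|_*=|\zeta|\,\|e\|_*$ with $\zeta\sim\cN(0,1)$, hence $\nu_P(\{\|v\|_*>\tfrac12\})=\bbP\big(|\zeta|>\tfrac1{2\|e\|_*}\big)\to1$ as $\|e\|_*\to\infty$; taking $\|e\|_*$ large enough that this probability exceeds $\tfrac12$ contradicts the defining property of $P_0$. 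Hence $\|\cdot\|_*$ must be weaker than $\|\cdot\|_H$.

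The computations are routine; the one point that needs genuine care is the preliminary reduction in part (a) — that a breakdown of the bound $\|\cdot\|_*\lesssim\|\cdot\|_H$ cannot be confined to finitely many directions but must persist on every finite-codimensional subspace — since this is exactly what lets a single rank-one test projection defeat measurability. In part (b) the analogous point is merely the basis-independence of the Hilbert--Schmidt sum, which makes the tail $\sum_{k>N}\|Tf_k\|_H^2$ control $\bbE\|T\xi\|_H^2$ uniformly over the admissible $P$. As neither property is deep, the main text simply quotes them from \cite{gross1967abstract,sheffield2007gaussian}.
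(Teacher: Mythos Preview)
Your proof is correct. The paper does not prove these Facts at all: it states them as background and refers the reader to \cite{gross1967abstract,sheffield2007gaussian} for both the definition of measurable norm and these two properties, exactly as you note in your final sentence. Your arguments---the Hilbert--Schmidt tail bound plus Markov for (b), and the contrapositive via rank-one projections for (a), with the key observation that failure of $\|\cdot\|_*\lesssim\|\cdot\|_H$ persists on every finite-codimensional subspace---are the standard ones from Gross's original paper, so there is nothing to compare.
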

It is worth to point out that, unless $\dim (H)<\infty$, the norm $\| \cdot \|^2_H = ( \cdot , \cdot )_H$ is not measurable on $H$, so that $\| \cdot \|_B \neq \| \cdot \|_H$. When $\| \cdot \|_B$ is itself induced by an inner product, which will turn out to be the case in our construction, $B$ 
is itself a Hilbert space.  

\subsubsection{Brownian Motion on a Hilbert space}
\begin{Definition}
Let $(H,B,\mu )$ be an abstract Wiener space, and denote by $\mu_t$ the unique\footnote{Existence and uniqueness follow trivially by existence and uniqueness of $\mu$. } 
  probability measure on $B$, such that 
$\mu_t \circ \phi = \cN(0, t \| \tilde{\phi} \|_H^2)$ for all $\phi \in B^*$.
Finally, let $C_B[0,\infty )$ the space of continuous functions from $[0,\infty )$ to $B$, equipped with the $\s$-algebra generated by the coordinate functions $x \mapsto x(t)$. 
Then (cf. \cite{da2014stochastic}, pp.81-85) there exists a unique probability measure {\boldmath{$\mu$}} on $C_B[0,\infty )$ such that, if $W$ is a random variable in $C_B[0,\infty )$ distributed according to {\boldmath{$\mu$}}, then the following hold:
\begin{itemize}
\item $W(0)=0$ {\boldmath{$\mu$}}-a.s.
\item $W$ has independent increments,
\item for any $0\leq s < t$, $W(t) - W(s)$ is distributed according to $\mu_{t-s}$.
\end{itemize}
If a random variable $W$ on $C_B[0,\infty)$ is distributed according to {\boldmath{$\mu$}} we call it (cylindrical) Brownian Motion on $H$.
\end{Definition}
\begin{Proposition}[\cite{da2014stochastic}, Proposition 4.3] \label{Ptrivial}
Let $(H,B,\mu)$ be an Abstract Wiener Space, and let $(e_k)_{k} $ be an ONB of $H$ with respect to $(\cdot , \cdot )_H$. If $W$ is a Brownian Motion on $( H, (\cdot , \cdot )_H)$, then there exists a collection of i.i.d. real-valued Brownian Motions $(\beta_k)_k$ such that 
	\[ W(t) = \sum_{k} \beta_k (t) e_k \,,\]
where the above series converges in $L^2(B)$.
\end{Proposition}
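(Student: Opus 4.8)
The plan is to first realise each coordinate process $\beta_k$ as a measurable "projection" of $W$ onto the direction $e_k$, via the Paley--Wiener isometry associated with the Abstract Wiener Space, and then to recover $W$ itself from the $\beta_k$'s by a closable-martingale argument in the Banach space $B$.

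First I would set up the Paley--Wiener map. For $\phi\in B^*$ the real process $t\mapsto\phi(W(t))$ has, by the defining properties of $W$, independent increments with $\phi(W(t))-\phi(W(s))\sim\cN(0,(t-s)\|\tilde\phi\|_H^2)$; polarising (using linearity of $\phi\mapsto\tilde\phi$) gives $\bbE[\phi(W(t))\psi(W(s))]=(t\wedge s)(\tilde\phi,\tilde\psi)_H$ for $\phi,\psi\in B^*$. Since $\{\tilde\phi:\phi\in B^*\}$ is dense in $H$ (a standard feature of the AWS, as the inclusion $H\hookrightarrow B$ is continuous with dense range), for $h\in H$ I pick $\phi^{(j)}\in B^*$ with $\tilde\phi^{(j)}\to h$; Doob's maximal inequality bounds $\bbE\sup_{t\le T}|\phi^{(j)}(W(t))-\phi^{(l)}(W(t))|^2$ by $4T\|\tilde\phi^{(j)}-\tilde\phi^{(l)}\|_H^2$, so $\phi^{(j)}(W(\cdot))$ converges in $L^2$, uniformly on compacts, to a continuous process $W_h(\cdot)$ that is linear in $h$, continuous in $h$ for the $L^2$ topology, and satisfies $\bbE[W_h(t)W_{h'}(s)]=(t\wedge s)(h,h')_H$. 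Setting $\beta_k:=W_{e_k}$, the family $(\beta_k)_k$ is jointly Gaussian with $\bbE[\beta_k(t)\beta_j(s)]=(t\wedge s)\delta_{kj}$; joint Gaussianity together with this block-diagonal covariance forces the $\beta_k$ to be independent, and each is a continuous centred Gaussian process with stationary independent increments starting at $0$, i.e.\ a standard real Brownian motion.

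Next I would prove $W(t)=\sum_k\beta_k(t)e_k$ in $L^2(B)$ for fixed $t$. Put $S_N(t)=\sum_{k=1}^N\beta_k(t)e_k\in H\subset B$ and $\cG_N=\sigma(\beta_1,\dots,\beta_N)$. The key point is that $S_N(t)=\bbE[W(t)\mid\cG_N]$ as a Bochner conditional expectation: $W(t)$ has all moments by Fernique's theorem, $B$ is separable so the Bochner conditional expectation is characterised by $\phi(\bbE[W(t)\mid\cG_N])=\bbE[\phi(W(t))\mid\cG_N]$ for all $\phi\in B^*$, and Gaussian conditioning gives $\bbE[\phi(W(t))\mid\cG_N]=\sum_{k=1}^N(\tilde\phi,e_k)_H\beta_k(t)=\phi(S_N(t))$ (since $\phi(W(t))=W_{\tilde\phi}(t)$ and $\bbE[W_{\tilde\phi}(t)\beta_k(s)]=(t\wedge s)(\tilde\phi,e_k)_H$). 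Hence $(S_N(t))_N$ is a closable $B$-valued martingale. Expanding $\tilde\phi$ in the basis shows $\phi(W(t))=W_{\tilde\phi}(t)$ is an $L^2$-limit of $\sigma(\beta_k:k)$-measurable variables for every $\phi\in B^*$; since $B$ is separable, countably many such $\phi$'s determine $W(t)$, so $\sigma(W(t))\subseteq\sigma(\beta_k:k)$ up to null sets. The martingale convergence theorem for closable Banach-valued martingales then yields $S_N(t)\to\bbE[W(t)\mid\sigma(\beta_k:k)]=W(t)$ in $L^2(\Omega;B)$, as claimed. (Alternatively, once one knows $\sum_k\beta_k(t)e_k$ has law $\mu_t$, convergence follows from the It\^o--Nisio theorem applied to the independent symmetric summands $\beta_k(t)e_k$.)

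The main obstacle is this second step: the elements $e_k\in H$ need not lie in the image of $B^*$, so $\beta_k$ is only a measurable-linear functional of $W(t)$, and one must transport Hilbert-space orthogonality into the weaker norm $\|\cdot\|_B$ on $B$. The identification $S_N=\bbE[W\mid\cG_N]$ — which relies on Fernique integrability and on $B^*$ separating points of the separable space $B$ — is exactly what makes this transfer work, and it is also what forces the limit to be $W$ itself rather than merely an equidistributed copy.
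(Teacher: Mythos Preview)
The paper does not prove this proposition: it is quoted verbatim from \cite{da2014stochastic} (Proposition~4.3 there) and used as a black box in the construction of Section~\ref{explicit}. There is therefore no ``paper's own proof'' to compare against.

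Your argument is a correct and standard route to the result. The Paley--Wiener extension of $\phi\mapsto\phi(W(t))$ from $B^*$ to $H$ is exactly how one produces the coordinate processes $\beta_k=W_{e_k}$, and the identification $S_N(t)=\bbE[W(t)\mid\cG_N]$ together with $B$-valued martingale convergence is the clean way to upgrade the formal series to genuine $L^2(B)$ convergence with the right limit. The only points worth tightening are (i) the appeal to Fernique's theorem to ensure $W(t)\in L^2(\Omega;B)$, which you mention but do not justify (it holds because $\mu_t$ is a centred Gaussian measure on the separable Banach space $B$), and (ii) the measurability claim $\sigma(W(t))\subseteq\sigma(\beta_k:k)$ up to null sets, which is fine as stated since a countable separating family in $B^*$ suffices. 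These are cosmetic; the proof is sound.
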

Note that by setting $t=1$ in the above result we deduce the following.
\begin{Corollary}\label{Ctrivial}
Let $(H,B,\mu)$ be an Abstract Wiener Space. If $X$ is a standard Gaussian random variable on $H$, i.e. $X\sim \mu$, then there exists a collection $(A_k)_k$ of i.i.d. $\cN (0,1)$ real random variables such that $\ds  X = \sum_{k} A_k e_k $.
\end{Corollary}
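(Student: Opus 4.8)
The plan is to obtain Corollary \ref{Ctrivial} as the time-one marginal of Proposition \ref{Ptrivial}. First I would recall that, by the very definition of Brownian motion on $H$, if $W$ is such a process then $W(1) = W(1)-W(0)$ is distributed according to $\mu_1 = \mu$; hence $W(1)$ is itself a standard Gaussian random variable on $H$. Since the assertion concerns only the law of $X$, there is no loss of generality in assuming $X = W(1)$ for some cylindrical Brownian motion $W$ on $\big(H,(\cdot,\cdot)_H\big)$, such a process existing by the construction recalled just before Proposition \ref{Ptrivial}.

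Next I would apply Proposition \ref{Ptrivial} with the given ONB $(e_k)_k$: it produces a collection of i.i.d.\ real-valued Brownian motions $(\beta_k)_k$ with $W(t) = \sum_k \beta_k(t)\, e_k$, the series converging in $L^2(B)$. Evaluating at $t=1$ yields $X = W(1) = \sum_k \beta_k(1)\, e_k$. Setting $A_k := \beta_k(1)$, the variables $(A_k)_k$ are i.i.d.\ with $A_k \sim \cN(0,1)$, and the displayed identity holds (with convergence in $L^2(B)$), which is exactly the claim.

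If instead one wants the coefficients $A_k$ realised directly as functionals of a prescribed $X\sim\mu$, I would invoke the Paley--Wiener map: the assignment $\tilde\phi \mapsto \phi(X)$, $\phi\in B^*$, is an isometry from $\{\tilde\phi : \phi\in B^*\}$ into $L^2(\mu)$ which extends to all of $H$; its value on $e_k$ is a centred Gaussian $A_k$ with $\bbE(A_k^2) = \|e_k\|_H^2 = 1$, the family $(A_k)_k$ is jointly Gaussian with $\bbE(A_jA_k) = (e_j,e_k)_H = \delta_{jk}$, hence i.i.d.\ $\cN(0,1)$, and $X = \sum_k A_k e_k$ in $L^2(B)$ is verified by testing against $B^*$. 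Either way, the only subtle point is that, when $\dim H = \infty$, $X\notin H$ almost surely, so the $A_k$ must be understood through this extension rather than as literal inner products $(X,e_k)_H$; this is precisely what Proposition \ref{Ptrivial} already encapsulates, which makes the one-line route above the cleanest — so I do not anticipate a genuine obstacle here, only the bookkeeping of keeping distributional versus almost-sure statements straight.
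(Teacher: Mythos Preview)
Your proposal is correct and follows exactly the paper's approach: the corollary is stated immediately after Proposition \ref{Ptrivial} with the remark ``by setting $t=1$ in the above result we deduce the following,'' which is precisely your first argument. The additional Paley--Wiener route you sketch is a valid alternative realisation of the coefficients as functionals of a given $X$, but goes beyond what the paper does.
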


\subsubsection{OU process on a Hilbert space}
Having constructed a Brownian Motion $W$ on $H$, one could then go ahead and define stochastic integration with respect to it. For a detailed account on the theory of stochastic integrals with respect to a Brownian motion taking values in an infinite-dimensional Hilbert space we refer the reader to \cite{da2014stochastic}, Chapter 4.
Here we are only interested in a very special case, namely the one of deterministic integrands which diagonalise on the ONB $(e_k)_{k}$ of $H$. Indeed, in this case the definition of stochastic integral with respect to $W$ reduces to the one of a countable collection of stochastic integrals on $\bbR$. 
\begin{Definition} \label{OUdef}
Let $(H,B,\mu)$ be an Abstract Wiener Space, and assume that the norm $\| \cdot \|_B$ on $B$ is induced by an inner product $(\cdot , \cdot )_B$, so that $B$ is itself a Hilbert space. Let  $(e_k)_{k}$ denote an ONB for $H$, and let $W(t) =\sum_k \beta_k(t) e_k$ be a Brownian Motion on $H$. Then, if  $\Psi (t) = \sum_{k} a_k(t) e_k$ for some collection of continuous real-valued functions $(a_k(t))_k $ such that $\sum_k \int_0^\infty |a_k(t)|^2 \dd t <\infty$, then 
	\[ \int_0^t \Psi (s) \dd W(s) := \sum_{k} \int_0^t a_k(s) \dd \b_k(s) e_k \, , 
	\qquad \forall t\geq 0 \, .\]
\end{Definition}

\vspace{1mm}

\subsection{The boundary process $\cW$} 
Let us now see how the above definitions read in the case of our interest.
Recall that  $D = \bbR / 2\pi \bbZ$, and for $f , g \in C^\infty (D)$ with Fourier expansion
$ f(\th ) =  \sum_{k\in \bbZ} \hat{f}_k e_k(\th)$, $ g(\th ) =  \sum_{k\in \bbZ} \hat{g}_k e_k(\th)$
introduce the inner product 
	\[ (f , g)_{H} := \sum_{k\neq 0}|k| \hat{f}_k \, \hat{g}_k \, . \]
Let $\sim$ be the equivalence relation on $C^\infty (D)$ which identifies two functions if they differ by a constant, and set $H_s (D) := C^\infty (D) / \sim $. We identify each equivalence class of $H_s(D)$ with its representative having zero average on $D$. Denote by $H$ the Hilbert space completion of $H_s (D)$ with respect to $(\cdot , \cdot )_{H}$. We seek to define an abstract Wiener space $(H , B , \mu)$ for a suitable choice of measurable norm  on $H$.

By property (b), in order to construct a measurable norm $\| \cdot \|_B $ on $H$ it suffices to find a Hilbert-Schmidt operator $T$ on $H$, and set $\| \cdot \|_B = \| T \cdot \|_{H} $.   In fact, we are going to construct a one-parameter family of such operators, and hence of measurable norms on $H$. 

For any $a \in \bbR$, let $(-\Delta )^a$ be the operator that acts on $L^2(D)$ functions by multiplying the Fourier coefficients by $ |k|^{2a}$, that is 
	\[ (-\Delta )^a \bigg(   \sum_{k\in \bbZ} \hat{f}_k e_k\bigg) (\th) := 
	 \sum_{k \neq 0} |k|^{2a} \hat{f}_k e_k (\th)  . \]
Set 
	\[ \cH_a = \Big\{ f \in L^2 (D) : (-\Delta )^a f \in L^2(D) \Big\} \diagup \sim \, , \]
and equip $\cH_a$ with the inner product $ ( f , g)_a := ( (-\Delta )^a f , (-\Delta )^a g )$. Then $\big( \cH_a , ( \cdot , \cdot )_a \big)$ is a Hilbert space, and in fact it is the Hilbert space completion of $H_s(D)$ with respect to the inner product $(\cdot , \cdot )_a$.
Denote by $\| \cdot \|_a$ the norm induced by $(\cdot , \cdot )_a$, and note that if $a>b$ then $\| \cdot \|_b \leq \| \cdot \|_a$, so that $\cH_a \subset \cH_b$. It follows that, whenever $b<a$, $\| \cdot \|_b$ is well defined on $\cH_a$, and moreover $\| f \|_b = \| (-\Delta )^{b-a} f \|_a$. In order for $\| \cdot \|_b$ to also be measurable on $\cH_a$, then, it suffices to show that $(-\Delta )^{b-a}$ is a Hilbert-Schmidt operator on $\cH_a$. To this end, note that if $f_k = (-\Delta )^{-a} e_k$ for $k\neq 0$, then $(f_k)_{k\neq 0}$ is an ONB for $\cH_a$. Moreover,
	\[ \sum_{k\neq 0} \|(-\Delta )^{b-a} f_k \|_a^2 = \sum_{k\neq 0} \big( (-\Delta )^b f_k , (-\Delta )^b f_k \big) = \sum_{k\neq 0} \big( (-\Delta )^{b-a} e_k , (-\Delta )^{b-a} e_k \big) = 2 \sum_{k\geq 1} k^{4(b-a)} \, , \]
which converges if and only if $b<a-1/4$. We have therefore proved the following.

\begin{Proposition} \label{PMeas}
Whenever $ b < a - 1/4$ the norm $\| \cdot \|_b$ is  measurable on $\cH_a$. 
\end{Proposition}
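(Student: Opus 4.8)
The plan is to realise $\|\cdot\|_b$, viewed as a norm on the smaller space $\cH_a$, in the form $\|T\,\cdot\,\|_a$ for a suitable Hilbert--Schmidt operator $T$ on $\big(\cH_a,(\cdot,\cdot)_a\big)$, and then appeal directly to property (b) of the Facts recalled above.

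First I would record the elementary inclusion and identity. Since $b<a$ one has $\cH_a\subset\cH_b$, so $\|\cdot\|_b$ is certainly defined on $\cH_a$. Moreover, because $(-\Delta)^c$ acts diagonally on the Fourier basis by multiplication by $|k|^{2c}$, these fractional Laplacians compose by adding exponents, and hence for every $f\in\cH_a$
	\[ \|f\|_b=\big\|(-\Delta)^b f\big\|_{L^2}=\big\|(-\Delta)^a(-\Delta)^{b-a}f\big\|_{L^2}=\big\|(-\Delta)^{b-a}f\big\|_a . \]
Setting $T:=(-\Delta)^{b-a}$, this says that the norm $\|\cdot\|_b$ on $\cH_a$ is exactly $\|T\,\cdot\,\|_a$; by Fact (b) it will therefore be measurable on $\cH_a$ as soon as $T$ is shown to be Hilbert--Schmidt on $\big(\cH_a,(\cdot,\cdot)_a\big)$.

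The next step is to compute the Hilbert--Schmidt norm of $T$, which requires an explicit orthonormal basis of $\big(\cH_a,(\cdot,\cdot)_a\big)$. I would take $f_k:=(-\Delta)^{-a}e_k$ for $k\in\bbZ\setminus\{0\}$ (the zero mode is absent because elements of $\cH_a$ are normalised to have zero average): orthonormality is immediate from $(f_k,f_j)_a=\big((-\Delta)^a f_k,(-\Delta)^a f_j\big)=(e_k,e_j)=\delta_{kj}$, while completeness follows from the fact that $(-\Delta)^a$ extends to a unitary map from $\big(\cH_a,(\cdot,\cdot)_a\big)$ onto the closed subspace $\{g\in L^2(D):\hat g_0=0\}$, on which $(e_k)_{k\neq0}$ is an orthonormal basis. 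With this basis, $Tf_k=(-\Delta)^{b-2a}e_k$, so $\|Tf_k\|_a^2=\big\|(-\Delta)^{b-a}e_k\big\|_{L^2}^2=|k|^{4(b-a)}$, and therefore
	\[ \sum_{k\neq0}\|Tf_k\|_a^2=2\sum_{k\geq1}k^{4(b-a)}, \]
which is finite precisely when $4(b-a)<-1$, i.e.\ when $b<a-1/4$. This shows that $T$ is Hilbert--Schmidt in exactly the claimed range, and the proposition follows.

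The argument is essentially bookkeeping once Fact (b) is in hand, so I do not expect a real obstacle; the only point that needs a little care is checking that $(f_k)_{k\neq0}$ is a genuine orthonormal \emph{basis} of $\cH_a$ rather than merely an orthonormal system, since the value of the Hilbert--Schmidt sum --- although basis-independent --- can only be read off once completeness is known, and this is where the quotient-by-constants convention and the spectral description of $(-\Delta)^a$ must be used correctly. If one preferred to avoid exhibiting the basis, one could instead invoke the basis-independence of the Hilbert--Schmidt norm and carry out the same computation in any Fourier-type basis, but the route above seems cleanest.
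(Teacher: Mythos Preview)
Your argument is correct and follows exactly the paper's approach: write $\|\cdot\|_b=\|(-\Delta)^{b-a}\cdot\|_a$, take the orthonormal basis $f_k=(-\Delta)^{-a}e_k$ of $\cH_a$, compute $\sum_{k\neq 0}\|(-\Delta)^{b-a}f_k\|_a^2=2\sum_{k\geq 1}k^{4(b-a)}$, and invoke Fact~(b). The only difference is that you spell out completeness of $(f_k)_{k\neq 0}$, which the paper simply asserts.
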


Note that $ ( f , g)_{H} = \big( (-\Delta )^{1/4} f , (-\Delta )^{1/4} g \big) $ by definition, so $(-\Delta )^{-1/4}$ provides a Hilbert space isomorphism between the spaces $H$ and $\cH_{1/4}$, that when convenient  we identify. It follows that in order to get a measurable norm on $H$ it suffices to have a measurable norm on $\cH_{1/4}$. 
By Proposition \ref{PMeas}, any norm of the form $\| \cdot \|_{-\e}$ for $\e >0$ will do. 

Let $\cB_a$ denote the Borel $\s$-algebra on $\cH_a$ for the norm $\| \cdot \|_a$. Then for any $\e >0$ there exists a unique Borel probability measure $\mu_{-\e}$ on $(\cH_{-\e} , \cB_{-\e})$ such that (iii) in Definition \ref{abstractWS} holds, i.e. $\mu_{-\e}  \circ \phi^{-1} = \cN ( 0 , \| \tilde{\phi} \|_{H} )$ for all $\phi$ continuous linear functionals on $\cH_{-\e}$,  
where $\tilde{\phi}$ is the unique element of $H$ such that $\phi ( h) = ( \tilde{\phi} , h )_{H}$ for all $h \in H$.  It follows that $(H, \cH_{-\e} , \mu_{-\e} )$ is an Abstract Wiener Space for any $\e >0$. 

\begin{Definition}
A random variable $X$ is said to be a standard Gaussian on the Hilbert space $H \cong \cH_{-1/4}$ if $X \sim \mu_{-\e}$ as random variable on $\cH_{-\e}$, for all $\e >0$.
\end{Definition}

\begin{Remark}
This definition is consistent, meaning that if $-\e_1 < -\e_2$, so that $\cH_{-\e_2} \subset \cH_{-\e_1}$, then by property (iii) of Definition \ref{abstractWS} the restriction of $\mu_{-\e_1}$ to $\cH_{-\e_2}$ coincides with $\mu_{-\e_2}$.  
\end{Remark}

To conclude this abstract construction, we point out that $X$ as above has a very simple expansion in Fourier basis. Indeed, if $f_k = (-\Delta )^{1/4} e_k = e_k / \sqrt{|k|}$, then $(f_k)_k$ is an ONB for $\cH_{-1/4} \cong H$. It then follows from Corollary \ref{Ctrivial} that there exists a collection $(A_k)_{k\in\bbZ}$ i.i.d. real $\, \cN (0,1)$ random variables such that
 	\[ X(\th ) = \sum_{k\neq 0}  A_k f_k(\th) = 
 	\sum_{k\neq 0} \frac{A_k}{\sqrt{|k|}}  \, e_k(\th) \,.\]

Moreover, in light of Proposition \ref{Ptrivial} it is now easy to  construct a Brownian motion $W$ on $H$: simply take a collection of i.i.d. real-valued Brownian Motions $(\beta_k)_{k\in \bbZ}$, and set 
	\[ W(t,\th) = \sum_{k\neq 0} \beta_k (t) f_k(\th ) 
	= \sum_{k\neq 0} \frac{\beta_k(t)}{\sqrt{|k|}}  \, e_k(\th) \,,\]
where the above series converge in $L^2(\cH_{-\e})$, for any $\e >0$.
From $W$ we construct an Ornstein-Uhlenbeck process on $H$ as follows. Let $\Psi : [0,\infty ) \to B$ be defined by 
	\[ \Psi (t) = \sum_{k\neq 0} \sqrt{2|k|} e^{-|k|t} f_k 
	= \sum_{k\neq 0} e^{-|k|t} e_k \, , \]
and set $  \tilde{\cW} (t) := \int_0^t \Psi (t-s) \dd W(s) $ for all $t\geq 0$.
Then by Definition \ref{OUdef} we have
	\[ \tilde{\cW} (t,\th) = \sum_{k\neq 0} \bigg[ 
	\int_0^t  e^{-|k|(t-s)} \dd \beta_k(s) \bigg] e_k (\th )\, , \]
so that the coefficients of $\tilde{\cW}$ perform i.i.d. OU processes on $\bbR$. 
Take an independent copy $\tilde{\cW}'$ of $\tilde{\cW}$. Then we have 
	\[  \frac{\tilde{\cW} (t , \cdot )  + i\tilde{\cW}' (t , \cdot )  }{\sqrt{2}} = 
	\sum_{k\neq 0} \Big( \frac{A_k(t) + iB_k(t) }{\sqrt{2}} \Big) e_k ( \cdot ) \, , \]
for i.i.d. OU processes $(A_k)_k$, $(B_k)_k$ on $\bbR$, which equals the r.h.s. of \eqref{Wformal}. This provides a rigorous construction of the boundary process $\cW$ of the limiting holomorphic field $\cF$ as an OU process on $\cH_{-\e}$, for any $\e >0$. 

\begin{Remark}
Note that, by linearity, 
	\[\begin{split} 
	 \dd \cW (t, \cdot ) & =  - \Big[ \sum_{k\neq 0} |k| \Big( \frac{A_k(t) + iB_k(t) }{\sqrt{2}} \Big) e_k (\cdot )\Big] \dd t + \sqrt{2} \Big[ \sum_{k\neq 0} \frac{\dd \beta_k (t) + i \dd \beta_k ' (t) }{\sqrt{2}} e_k (\cdot ) \Big] \, , 
	 \end{split} \]
where we have denoted by $(\b_k )$, $(\b_k ')$ the i.i.d. real Brownian Motions driving the OU processes $(A_k)$, $(B_k)$. This
 shows that $\cW$ solves the Fractional Stochastic Heat Equation 
	\[  \dd \cW (t, \cdot ) = -(-\Delta )^{1/2} \cW (t, \cdot ) + \sqrt{2} \, \dd \xi (t, \cdot ) \]
for $\xi $ complex space-time white noise on the unit circle $\bbT$. 
\end{Remark}

\begin{Remark}
We point out that the convergence result $\cF_n \to \cF$ of Theorem \ref{Thol_new} can be interpreted as a convergence result for the corresponding boundary values, seen as distributions acting on a suitable space of test functions. More precisely, let $\cW_n$ denote the boundary values of $\cF_n$, so that $\cW_n (t, \th ) = \cF_n (t,e^{i\th} )$ for $(t,\th ) \in [0,\infty ) \times [-\pi , \pi )$. As space of test functions we take 
	\[ \big\{ \varphi \in C^\infty (\bbT ) : \varphi = P_r * \psi \mbox{ for some } \psi \in C^\infty (\bbT ) \mbox{ and some }r>1 \big\} \, . \] 
For each such $\varphi$, we have:
	\[ ( \cW_n(t, \cdot ) , \varphi ) = ( \cW_n (t, \cdot ) , P_r * \psi ) = ( P_r * \cW_n (t,\cdot )  , \psi ) = (\cF_n (t, re^{i\cdot} ) , \psi ) \to ( \cF (t, r e^{i \cdot} ) , \psi ) \]
as $n \to \infty$ in distribution, as continuous functions on $\bbT$. It would be interesting to understand if such convergence holds for a larger class of test functions, and ultimately as stochastic processes taking values in $\cH_{-\e}$ for $\e >0$.
\end{Remark}

\vspace{3mm}

\section{Further developments} \label{Sopen}
As mentioned in the introduction, arguably  the main open problem in this area is to obtain rigorous results on (the non-regularised version of) HL($\a$) models with $\a >0$, with a particular interest for $\a \in [1,2]$. 
In this last section, on the other hand, we would like to collect some open questions for the $\a =0$ case, which has proven to be already  very interesting from a mathematical point of view, and it has the advantage of being  more tractable than the case $\a >0$ due to its intrinsic i.i.d. structure. 

\subsection*{Maxima of the cluster boundary}
Once the question of fluctuations of cluster boundary has been settled, one is led to wonder about the asymptotic behaviour of the maxima of the cluster. For $n\geq 0$, set $m_n = \sup \{ |z| : z \in K_n \}$. It follows from the analysis carried out in \cite{norris2012hastings} that $| m_n -e^{cn} | \to 0$ almost surely as $n\to\infty$, $ c\to 0$ and $nc \to t$ for some $t>0$. What about fluctuations around this deterministic behaviour?

\subsection*{Connections with IDLA}
Let us recall the definition of a different, discrete model of aggregation on the plane. Consider the lattice $\bbZ^2$, set $K_0=\{(0,0)\}$ to be the initial cluster, and  define $K_1 \subset K_2 \subset K_3 \ldots $ recursively as follows. At each step $n\geq 1$, start a random walk $X_n = (X_n(k))_{k\in \bbN}$ (independent of everything else) from the origin, and, if $T_n = \inf \{ k\geq 0 : X_n(k) \notin K_{n-1} \}$ denotes the first time $X_n$ exits the cluster $K_{n-1}$, set $K_n = K_{n-1} \cup \{ X_n (T_n ) \}$. This grows an increasing family of connected clusters $(K_n)_{n\geq 0} \subset \bbZ^2$, whose dynamics is usually referred to as \emph{Internal Diffusion Limited Aggregation} (in short IDLA) on  $\bbZ^2$. IDLA was originally introduced by Meakin and Deutch in \cite{meakin1986formation}, and its large $n$ behaviour is by now well understood in any dimension. 
We focus here on the $2$-dimensional case, which we have described above. It is shown in \cite{lawler1992internal} that $K_n \approx B_{r(n)} (0)$ for $n$ large enough, where $B_r(0)$ denotes the Euclidean ball of radius $r$ centred at the origin, and $r(n) = \sqrt{n/\pi }$, so that $\pi r^2(n) = n$. In \cite{asselah2013logarithmic,jerison2012logarithmic} fluctuations around this asymptotic spherical shape are analysed. In particular, the authors show that the maximum fluctuations of $K_n$ are $\mathcal{O} (\log n )$ for large $n$. 
The average fluctuations are then studied in \cite{jerison2014internal}, and are shown to be given by the restriction of a variant of the $2$-dimensional Gaussian Free Field, so called \emph{Augmented GFF}, to the unit circle. If, on the other hand, one considers IDLA on the $2$-dimensional cylinder $(\bbZ / N \bbZ ) \times \bbZ_+$  rather than the whole plane, then (cf. \cite{jerison2014internal2}) the fluctuation field coincides \emph{exactly} with the restriction of the whole plane GFF to the unit circle, i.e. our limiting fluctuation field $\cW_\infty$. One is then led to wonder whether the connection between HL($0$) and IDLA clusters goes beyond having the same scaling limit and fluctuations. 

\subsection*{Log-correlation and branching structure}
Log-correlated Gaussian fields appear to arise in correspondence with underlying branching structures. To make this statement more precise, let us give some examples.
\subsubsection*{Branching Random Walk}
For $N \in \bbN_+$ consider a binary tree $G_N = (V_N , E_N)$ of depth $N$, and assign to each edge $e \in E_N$ a standard Gaussian random variable $X_e$ in an independent fashion. Let $L_N$ denote the set of leaves of the binary tree (so that $|L_N|=2^N$), and  for each $v \in L_N$ denote by  $\g_v$ the unique path from $v$ to the root. Then set
	\[ Y_v = \sum_{e \in \g_v} X_e  \, . \]
It is easy to see that, when the set of leaves is seen as embedded in the unit interval $[0,1]$ (i.e. the leaves are identified with $2^N$ equispaced points in $[0,1]$), the correlation between two leaves $v,v' \in L_N$ decays logarithmically with their Euclidean distance $|v-v'|$, for all $N$. In other words, at each level $N\geq 1$ the random field $(Y_v)_{v\in L_N}$ is a log-correlated (Gaussian) field.

\subsubsection*{Branching in IDLA}
A branching structure is also present in the IDLA model described above. Indeed, one could imagine to colour, each time a new particle is added to the cluster, the last edge along which the correspondent random walk has jumped in order to exit the cluster. More precisely, define the set of coloured edges recursively as follows. Given the cluster $K_{n-1}$ with $n-1$ particle, start an independent random walk $X_{n}$ from the origin and, if $T_{n} $ is the first exit time of $X_n$ from $K_{n-1}$, colour the edge $(X_n (T_{n} -1) , X_n (T_n ) )$. It is easy to see that the set of coloured edges forms a tree. Moreover, if one considers two points on the cluster boundary (which is asymptotic circular), then it follows from the work in \cite{jerison2014internal} that the correlation between fluctuations from circularity at the two points decays logarithmically with their distance. 

\subsubsection*{Branching in HL growth}
The fact that a branching mechanism arises in the growth of HL($0$) clusters was discovered by J. Norris and A. Turner in \cite{norris2012hastings}. 
Think for simplicity of the particle $P$ as being a slit, i.e.  $P=[1,1+\d ] \subset \bbC$. Then each time a new particle arrives it gets attached to exactly one particle in the cluster, at exactly one point, almost surely. Call this \emph{parent particle}. This identifies an ancestral structure in the cluster. Given any finite set of points, it is shown in \cite{norris2012hastings} that the corresponding ancestral lines converge to backwards coalescing Brownian Motions, i.e. Brownian Motions which evolve independently backwards in time until they coalesce, at which point they merge.
This result can be rephrased by saying that the cluster boundary converges, in a finite-dimensional sense, to the so called \emph{Brownian Web}.  
Here the underlying (forward) branching structure is then really a backwards  coalescing structure of the cluster boundary\footnote{Note that the dynamics of backwards coalescing Brownian Motions is different from the one of forward branching Brownian Motions, since for example in the latter case paths can intersect after branching.}, but nonetheless one obtains log-correlated fluctuations. \\

In all the above examples log-correlated Gaussian fields arise as fluctuation fields of several different models, all featuring some kind of underlying branching structure. We believe it would be very interesting to understand to what extent this is a general phenomenon, and, if so, how robust it is with respect to variations of the branching mechanism (e.g. introduction of correlation between branches).


\appendix

\section{Local fluctuations} \label{Aloc}
Our main result on local fluctuations is the following. 
\begin{Theorem}\label{TlocBig}
Pick any $t>0$, and let $z = e^{ia+\s}$, $w=e^\s$ for some $a \in [-\pi , \pi )$, $\s >0$. Define $\e = \d^{2/3} \log (1/\d )$ as in Theorem \ref{teoJ}, and assume that $\s \to 0$ as $c\to 0$, with $\s \gg \sqrt{\e}$. 
Then,  as $n \to \infty$, $nc \to t$ and $\s \to 0$,  it holds:
	\[\left(  \frac{\log \frac{\Phi_n (z)}{z} -nc }{\sqrt{c \log (\frac{1}{2\s})}} \,  , \,  
	\frac{\log \frac{\Phi_n (w)}{w} -nc }{\sqrt{c \log (\frac{1}{2\s})}}  \right)
	\longrightarrow \, 
	(\cN_1 , \cN_2 )  \]
in distribution, where $(\cN_1 ,\cN_2)$ is a random vector with centred complex Gaussian entries, and covariance structure given by 
	\[ \bbE 
    (\cN_1 \cN_2 ) = \left( \begin{matrix}
	\frac{1}{1+\a^2} & -\frac{\a }{1+\a^2} \\
	\frac{\a }{1+\a^2} & \frac{1}{1+\a^2} 
	\end{matrix} \right) \, , 
	\]
for $ \a = \lim_{\s \to 0} \frac{a}{2\s} \in [0, \infty ]$ (with the convention that $\frac{1}{1+\a^2} = \frac{\a}{1+\a^2} =0$ when $\a =\infty$). 
\end{Theorem}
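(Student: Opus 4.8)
I would reuse, essentially verbatim, the backwards martingale central limit machinery of Sections~\ref{S3}--\ref{S4}; the one genuinely new ingredient is the asymptotic analysis of the limiting (co)variances as $\s\to0$, and the point of the standing hypothesis $\s\gg\sqrt\e$ is exactly to keep every error term that appeared in Section~\ref{S3} negligible \emph{uniformly} in this double limit.

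\textbf{Reduction and the Lindeberg condition.} By rotational invariance I take $b=0$ and, as in \eqref{XY}, write $\frac1{\sqrt c}(\log\frac{\Phi_n(z)}{z}-nc)=\sum_{k\le n}(X_{k,n}^\s(a)+iY_{k,n}^\s(a))$ and the analogous identity for $w=e^\s$ with $a$ replaced by $0$; these are backwards martingale array sums for $(\mathscr F_{k,n})_{k\le n}$. By Cram\'er--Wold it suffices to prove, for arbitrary $p_1,q_1,p_2,q_2\in\bbR$, a one-dimensional CLT for $S_n=\sum_{k\le n}\cX_{k,n}$ with $\cX_{k,n}=(p_1X_{k,n}^\s(a)+q_1Y_{k,n}^\s(a)+p_2X_{k,n}^\s(0)+q_2Y_{k,n}^\s(0))/\sqrt{\log(1/2\s)}$, again a backwards martingale difference array; I then check the hypotheses of Theorem~\ref{ML_CLT}. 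Condition (I) needs nothing beyond Lemmas~\ref{badE}--\ref{LEgoodE}: the extra $1/\sqrt{\log(1/2\s)}$ only sharpens those bounds, so on the good event $\max_{k\le n}|\cX_{k,n}|\le C(t)\sqrt c/(\s\sqrt{\log(1/2\s)})\to0$ (here $\s\gg\sqrt\e\gg\sqrt c$), while $\bbP(E(m,\e)^c)\to0$ by Theorem~\ref{teoJ}; the argument of Lemma~\ref{le3} then gives (I).

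\textbf{Condition (II): the limiting variance.} Since $\s\gg\sqrt\d$, Lemma~\ref{replace} reduces (II) to finding the limit in probability of $\sum_{k\le n}\bbE(\cX_{k,n}^2\mid\mathscr F_{k+1,n})$. Expanding and using Proposition~\ref{simpler} and its two-point analogue (which give $\bbE(X^\s(a)X^\s(0)\mid\cdot)=\bbE(Y^\s(a)Y^\s(0)\mid\cdot)$ and $\bbE(X^\s(a)Y^\s(0)\mid\cdot)=-\bbE(Y^\s(a)X^\s(0)\mid\cdot)$), everything is expressed through $\Sigma^{XX}_n(a):=\sum_{k\le n}\bbE(X_{k,n}^\s(a)X_{k,n}^\s(0)\mid\mathscr F_{k+1,n})$ and $\Sigma^{XY}_n(a):=\sum_{k\le n}\bbE(X_{k,n}^\s(a)Y_{k,n}^\s(0)\mid\mathscr F_{k+1,n})$, the single-point variances being the case $a=0$. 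On $E(m,\e)$, Lemma~\ref{Llong} (taken with $s=t$) replaces $Z_{k,n}^\s(a)$ by its deterministic approximation $e^{ia+\s+(n-k)c}$ at the cost of a total error $O(\e/\s^2)$, which is $o(1)$ precisely because $\s\gg\sqrt\e$. A Riemann-sum passage (using $nc\to t$) then gives
\[\Sigma^{XX}_n(a)\longrightarrow\int_\s^{\s+t}\!\big(P_{e^{-2x}}(a)-1\big)\,\dd x=\Re\log\frac{1-e^{-2(\s+t)+ia}}{1-e^{-2\s+ia}},\qquad\Sigma^{XY}_n(a)\longrightarrow\Im\log\frac{1-e^{-2(\s+t)+ia}}{1-e^{-2\s+ia}}.\]
In particular $\Sigma^{XX}_n(0)$ is asymptotic to $v_t^2(\s)=\log\frac{1-e^{-2(\s+t)}}{1-e^{-2\s}}\sim\log(1/2\s)$, which is why the normalisation $\sqrt{c\log(1/2\s)}$ is the correct one. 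To finish I divide by $\log(1/2\s)$ and let $\s\to0$ with $a/(2\s)\to\a$: the numerators tend to $1-e^{-2t}$ and stay bounded, while $1-e^{-2\s+ia}=2\s(1-i\a)(1+o(1))$, so the divergent part is common to $\Sigma^{XX}_n(a)$ and $\Sigma^{XX}_n(0)$ and the $\a$-dependence resides in the bounded corrections $\log|1-i\a|=\tfrac12\log(1+\a^2)$ and $\arg(1-i\a)=-\arctan\a$. Carrying out this bookkeeping carefully, I expect the limiting covariance of $(\cN_1,\cN_2)$ to emerge as $\tfrac1{1+\a^2}\left(\begin{smallmatrix}1 & -\a\\ \a & 1\end{smallmatrix}\right)$, degenerating to the identity at $\a=0$ and to the zero matrix at $\a=\infty$; Theorem~\ref{ML_CLT} and the Cram\'er--Wold step then deliver the stated joint convergence.

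\textbf{Where the difficulty is.} The technical core is making all of the Section~\ref{S3} estimates hold uniformly as $\s\to0$, which is exactly what forces $\s\gg\sqrt\e$ and confines $\a$ to the double-scaling window $a\asymp\s$. The delicate point I expect to spend the most effort on is that $\Sigma^{XX}_n(0)$ and $\Sigma^{XX}_n(a)$ each diverge like $\log(1/2\s)$, so the entire $\a$-dependence of the answer sits in their finite differences, which are produced by the first $O(\s/c)$ summands --- the ``boundary layer'' near $k=n$, where the Poisson kernel $P_{e^{-2(\s+(n-k)c)}}$ is peaked at angular scale $\s$. Resolving those finite differences while ensuring that the competing deterministic-substitution error $\e/\s^2$ does not swamp them is the crux; the rest of the argument is a routine rerun of Section~\ref{S3}.
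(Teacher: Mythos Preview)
Your proposal is correct and follows essentially the same route as the paper: rescale the martingale increments by $\sqrt{\log(1/2\s)}$, rerun Lemmas~\ref{le3}--\ref{replace}, Proposition~\ref{simpler} and Lemma~\ref{Llong} (with $s=t$) under the standing hypothesis $\s\gg\sqrt\e$, and then Taylor-expand the closed forms $\Re\,/\,\Im\,\log\frac{1-e^{-2(\s+t)+ia}}{1-e^{-2\s+ia}}$ about $(\s,a)=(0,0)$ to read off the limiting covariance. The paper carries out this last step by a direct expansion of the explicit expressions rather than any boundary-layer decomposition of the sum, so the difficulty you anticipate in your final paragraph does not in fact arise.
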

Theorem \ref{TlocBig} follows by the same arguments that lead to Theorem \ref{Tfields}, considering now $\frac{\cX_{k,n}^\s ( \cdot )}{\sqrt{ \log ( \frac{1}{2\s} ) } }$ in place of $\cX_{k,n}^\s (\cdot )$. Under the assumption $\s \gg \sqrt{\e}$ Lemmas \ref{le3}-\ref{replace} still apply, so that Theorem \ref{ML_CLT} holds. Together with Proposition \ref{simpler}, this shows that in the limit as $\s \to 0$, $n \to\infty$ and $nc \to t$, real and imaginary part of $\frac{\log \frac{\Phi_n (e^{ia+\s})}{e^{ia+\s}} -nc }{\sqrt{c \log (\frac{1}{2\s})}} $ are asymptotically i.i.d. centred Gaussians, with limiting variance given by 
	\[ \lim_{\s \to 0} \frac{1}{\log \big( \frac{1}{2\s} \big)} \bigg[ 
    \frac{1}{2\pi} \int_{\s}^{\s+t} \int_{-\pi}^{\pi} \bigg[ \mathrm{Re} \bigg( \frac{e^{-i\th + x} +1}{e^{-i\th + x} -1}\bigg) \bigg]^2 \dd \th \dd x - t \bigg] 
    = \lim_{\s \to 0}  \frac{1}{\log \big( \frac{1}{2\s} \big)}  
    \log \bigg| \frac{1-e^{-2(\s +t)}}{1-e^{-2\s}} \bigg|  = 1 \, . \]
For two point correlation we reason as in Section \ref{Sfdd}, to gather that the limiting covariance between $\mathrm{Re} \Big(  \frac{\log \frac{\Phi_n (e^{ia+\s})}{e^{ia+\s}} -nc }{\sqrt{c \log (\frac{1}{2\s})}} \Big)$ and $\mathrm{Re} \Big( \frac{\log \frac{\Phi_n (e^{\s})}{e^{\s}} -nc }{\sqrt{c \log (\frac{1}{2\s})}} \Big)$ is given by 
	\[ \begin{split} 
	\lim_{\s \to 0} \,  \frac{1}{\log \big( \frac{1}{2\s} \big)}  & \bigg[ 
    \frac{1}{2\pi} \int_{\s}^{\s+t} \int_{-\pi}^{\pi}  \mathrm{Re}  \bigg( \frac{e^{-i(\th -a) + x} +1}{e^{-i(\th -a) + x} -1}  \bigg) 
    \mathrm{Re} \bigg( \frac{e^{-i\th + x} +1}{e^{-i\th + x} -1}\bigg)  \dd \th \dd x - t \bigg] =
    \\ & 
     = \lim_{\s \to 0}  \frac{1}{\log \big( \frac{1}{2\s} \big)}  
    \log \bigg| \frac{1-e^{-2(\s +t)+ia}}{1-e^{-2\s+ia}} \bigg|  
    = \lim_{\s \to 0} \frac{ \log ( 1 + e^{-4\s} -2e^{-2\s} \cos a )}{2 \log 2\s} = \frac{1}{1+\a^2} 
    \end{split}\]
whenever $a/2\s \to \a \in [0,\infty ]$, where the last equality is obtained by Taylor expanding around $a=0 , \s =0$.  The same holds for imaginary parts correlation.

Finally, the asymptotic covariance between $\mathrm{Re} \Big(  \frac{\log \frac{\Phi_n (e^{ia+\s})}{e^{ia+\s}} -nc }{\sqrt{c \log (\frac{1}{2\s})}} \Big)$ and $\mathrm{Im} \Big( \frac{\log \frac{\Phi_n (e^{\s})}{e^{\s}} -nc }{\sqrt{c \log (\frac{1}{2\s})}} \Big)$ is given by 
	\[ \begin{split} 
	\lim_{\s \to 0} \,  \frac{1}{\log \big( \frac{1}{2\s} \big)}  & \bigg[ 
    \frac{1}{2\pi} \int_{\s}^{\s+t} \int_{-\pi}^{\pi}  \mathrm{Re}  \bigg( \frac{e^{-i(\th -a) + x} +1}{e^{-i(\th -a) + x} -1}  \bigg) 
    \mathrm{Im} \bigg( \frac{e^{-i\th + x} +1}{e^{-i\th + x} -1}\bigg)  \dd \th \dd x - t \bigg] =
    \\ & 
     = \lim_{\s \to 0}  \frac{1}{\log \big( \frac{1}{2\s} \big)}  
    \mathrm{Arg} \bigg( \frac{1-e^{-2(\s +t)+ia}}{1-e^{-2\s+ia}} \bigg) 
    = \lim_{\s \to 0}  \frac{1}{\log2\s} \arctan \bigg( \frac{\sin a }{\cos a - e^{2\s}} \bigg)   
    = \frac{\a}{1+\a^2} 
    \end{split}\]
whenever $a/2\s \to \a \in [0,\infty ]$, where the last equality is obtained by Taylor expanding around $a=0 , \s =0$. This concludes the proof of Theorem \ref{TlocBig}.

\vspace{5mm}

\section{Proof of Lemma \ref{Llong}} \label{Alemma}
Fix any $k\leq \ns$ and set for simplicity
$ Z_1 = Z_{k,\nt} (a) , \quad Z_2 = Z_{k,\ns} (0) , \quad 
	W_1 = e^{ia + \s + (\nt -k)c} $, $ W_2 = e^{\s + (\ns -k)c }$.
Moreover, introduce the functions $g_\th ( z) = \mathrm{Re} \big( \log \frac{F(e^{-i\th} z)}{e^{-i\th} z} \big) $, $h_\th ( z) = c \,\mathrm{Re} \big(\frac{e^{-i\th }z +1}{e^{-i\th}z-1} \big)$. 
Then $\bbE(X_{k,\nt }(a)X_{k,\ns}(0)| \mathscr{F}_{k+1,\nt }) = \frac{1}{2\pi c } \int_{-\pi}^\pi g_\th ( Z_1) g_\th ( Z_2 ) \dd \th -c$, and we have to show that 
	\begin{equation}\label{goal}
    \bigg| \frac{1}{2\pi c } \int_{-\pi}^\pi g_\th ( Z_1) g_\th ( Z_2 ) \dd \th - 
    \frac{1}{2\pi c} \int_{-\pi}^\pi h_\th ( W_1 ) h_\th ( W_2 ) \dd \th \bigg| 
    \leq \frac{C(t) c \e}{(\s+(\ns -k)c)^3} \, . 
	\end{equation}
Trivially, the l.h.s. is bounded above by 
	\[
    \frac{1}{2\pi c} \int_{-\pi}^\pi \Big( |g_\th ( Z_2 )| \cdot | g_\th ( Z_1) - h_\th ( W_1)| 
    + |h_\th ( W_1)| \cdot | g_\th ( Z_2 ) - h_\th ( W_2 )| \Big) \dd \th \, . \]
We bound each term separately. Recall the definition of  $E(m,\e )$, from which it follows that 
	\[ \begin{split} 
	& \max_{\th \in [-\pi , \pi )} \Big\{ |Z_{k,\nt} (\th )| \vee  |Z_{k,\ns} (\th )| \Big\}
	\leq e^{\s + (\nt -k)c} (1+2\e ) \leq 2e^{t+2} = C(t) \, , \\
	& \min_{\th \in [-\pi , \pi )} \Big\{ |Z_{k,\nt} (\th )| \wedge |Z_{k,\ns} (\th )|  -1 \Big\}  \geq e^{\s + (\ns-k)c} (1-2\e ) 
	\geq \frac{\s + (\ns -k)c }{2} 
	\end{split} \]
as long as 
$n$ is large enough and $\s \gg \e$. We combine the above estimates with the bounds in Corollary \ref{Fcor}, to get
	\begin{equation}\label{p1}
	\begin{split} 
	 | g_\th (Z_2) | & \leq \bigg| \log \frac{F(e^{-i\th} Z_2 )}{e^{-i\th} Z_2 } - 
	c \frac{e^{i\th} Z_2 +1}{e^{-i\th} Z_2 -1} \bigg| + c \bigg| \frac{e^{i\th} Z_2 +1}{e^{-i\th} Z_2 -1} \bigg|  \leq \frac{C c^{3/2} |Z_2|^2}{(|Z_2|-1)^3} + \frac{2c |Z_2|}{|Z_2|-1} \\
	& \leq \frac{C(t)c^{3/2}}{(\s + (\ns -k)c)^3} + \frac{C(t)c}{\s + (\ns -k)c} 
	\leq \frac{2C(t)c}{\s + (\ns -k)c} 
	\end{split} 
	\end{equation}
for $n$ large enough. Similarly, we find
	\begin{equation}\label{p2}
	 |h_\th (W_1 )| \leq c \bigg( 1+\frac{2}{|W_1|-1}\bigg) \leq c \bigg( 1 + \frac{2}{\s + (\ns -k)c} \bigg) \leq \frac{C(t)c}{\s + (\ns -k)c} 
	 \end{equation}
on the event $E(m,\e )$ for, say, $C(t) = t+3$ and $n$ large enough.

In order to bound the remaining terms, observe that, by definition, on $E(m,\e )$ we have 
	\[ \max_{\th \in [-\pi , \pi )} \Big\{ \big| Z_{k,\ns } (\th ) - e^{i\th + \s + (\ns -k)c} \big| \vee  \big| Z_{k,\nt } (\th ) - e^{i\th + \s + (\nt -k)c} \big| \Big\}
	\leq C(t) \, \e \, , \]
from which we get $|Z_1 - W_1| \leq C(t) \e $ and $|Z_2 - W_2 | \leq C(t) \e$. Combining this with Corollary \ref{Fcor} we finally obtain 
	\[ \begin{split} 
	| g(Z_1) - h(W_1)| & \leq |g(Z_1) - h(Z_1)| + |h(Z_1) - h(W_1)| \leq 
	\frac{C c^{3/2} |Z_1|^2}{(\s + (\ns -k)c)^3} + 
	\frac{2c |Z_1 - W_1|}{(|Z_1|-1)(|W_1|-1)} 
	\\ & \leq \frac{C(t)c^{3/2}}{(\s + (\ns-k)c)^3} + \frac{C(t)c\e }{(\s + (\ns-k)c)^2} 
	\leq \frac{2C(t)c\e}{(\s + (\ns-k)c)^2} 
	\end{split} \]
for $n$ large enough. Similarly one shows that the same bound holds for $| g(Z_2) - h(W_2)|$.
Putting now this together with  \eqref{p1} and \eqref{p2} gives \eqref{goal}. 

The second statement of the lemma follows by the same arguments, and the proof is omitted.


\bibliography{HLbib2}{}
\bibliographystyle{hplain}
\vspace{5mm}

\end{document}